\newtheorem{thm}{Theorem}
\newtheorem{lem}[thm]{Lemma}
\newtheorem{cor}[thm]{Corollary}
\newtheorem{defi}[thm]{Definition}
\newtheorem{prop}[thm]{Proposition}
\newtheorem{rk}[thm]{Remark}
\newcommand{\vip}{\vskip.18cm}
\newcommand{\e}{{\varepsilon}}
\newcommand{\rr}{{\mathbb{R}}}
\newcommand{\cc}{{\mathbb{C}}}
\newcommand{\rd}{{\rr^2}}
\newcommand{\nn}{{\mathbb{N}}}
\newcommand{\E}{\mathbb{E}}
\newcommand{\cF}{{\mathcal F}}
\newcommand{\cH}{{\mathcal H}}
\newcommand{\cL}{{\mathcal L}}
\newcommand{\cK}{{\mathcal K}}
\newcommand{\cP}{{\mathcal P}}
\newcommand{\cS}{{\mathcal S}}
\newcommand{\tX}{{\tilde X}}
\newcommand{\tz}{{\tilde z}}
\newcommand{\tB}{{\tilde B}}
\newcommand{\indiq}{{{\bf 1}}}
\newcommand{\intot}{\int_0^t}
\newcommand{\intrd}{{\int_{\rd}}}
\newcommand{\diver}{{\mathrm {div}}}
\newcommand{\Pro}{\mathbb{P}}
\begin{document}

\title[Particle approximation of the Keller-Segel equation]{Stochastic particle approximation of the 
Keller-Segel equation and two-dimensional generalization of Bessel processes}

\author{Nicolas Fournier}

\address{Nicolas Fournier, LPMA, UMR 7599,
UPMC, Case 188, 4 place Jussieu, F-75252 Paris Cedex 5, France.
E-mail: {\tt nicolas.fournier@upmc.fr}}

\author{Benjamin Jourdain}
\address{Benjamin Jourdain, Universit\'e Paris-Est, Cermics (ENPC), INRIA, F-77455 Marne-la-Vall\'ee, France.
E-mail: {\tt jourdain@cermics.enpc.fr }}

\begin{abstract}
The Keller-Segel partial differential equation is a two-dimensional model for chemotaxis. When the total 
mass of the 
initial density is one, it is known to exhibit blow-up in finite time as soon as the sensitivity $\chi$ 
of bacteria to 
the chemo-attractant is larger than $8\pi$. We investigate its approximation by a system of $N$ 
two-dimensional Brownian 
particles interacting through a singular attractive kernel in the drift term.

In the very subcritical case $\chi<2\pi$, the diffusion strongly dominates this singular drift: 
we obtain existence 
for the particle system and prove that its flow of empirical measures converges, as $N\to\infty$
and up to extraction of a subsequence, to a weak solution of the Keller-Segel equation.

We also show that for any $N\ge 2$ and any value of $\chi>0$, pairs of particles do collide with positive
probability: the singularity of the drift is indeed visited. 
Nevertheless, when $\chi<2\pi N$, it is possible to control the drift and obtain existence of the particle system 
until the first time when at least three particles collide. We check that this time is a.s. infinite,
so that global existence holds for the particle system,
if and only if $\chi\leq 8\pi(N-2)/(N-1)$.

Finally, we remark that in the system with $N=2$ particles, the difference between the two positions 
provides a natural 
two-dimensional generalization of Bessel processes, which we study in details.
\end{abstract}

\keywords{Keller-Segel equation, Stochastic particle systems, Propagation of chaos, Bessel processes.}

\subjclass[2010]{65C35, 35K55, 60H10.}

\thanks{We thank Aur\'elien Alfonsi (CERMICS) for numerous discussions about the properties of 
Bessel and squared Bessel processes.}

\maketitle

\section{Introduction and results}
\subsection{The model}

The Keller-Segel equation, introduced by Patlak \cite{p} and Keller and Segel \cite{ks}, 
is a model for chemotaxis. 
It describes the collective motion of cells 
which are attracted 
by a chemical substance and are able to emit it. In its simplest form it is a conservative drift/diffusion 
equation 
for the density $f_t(x)\geq 0$ of cells (particles) with position $x \in \rd$ at time $t\geq 0$ 
coupled with an elliptic 
equation for the chemo-attractant concentration. By making the chemo-attractant concentration explicit in terms of the cell 
density, one obtains the following closed equation:
\begin{equation}\label{ks}
\partial_t f_t(x) + \chi \diver_x ((K\star f_t)(x) f_t(x))  = \Delta_x f_t(x),
\end{equation}
where $\chi>0$ is the sensitivity of cells to the chemo-attractant and where
\begin{equation}\label{K}
K(x)= \frac{-x}{2\pi|x|^2}.
\end{equation}
In the whole paper, we adopt the convention that $K(0)=0$.

\vip

This equation preserves mass and $f_t(x)/\intrd f_0(y)dy$ solves the same equation with $\chi$ replaced by 
$\chi\intrd f_0(y)dy$. We thus may assume without loss of generality that $\intrd f_0(x)dx=1$.

\vip

As is well-known, we have formally $\frac{d}{dt}\int_{\rd}xf_t(x)dx=0$ and 
$\frac{d}{dt}\int_{\rd}|x|^2f_t(x)dx=4-\chi/(2\pi)$. Consequently,
introducing $V_t:=\int_{\rd}|x-\int_{\rd}yf_t(y)dy|^2f_t(x)dx$, it holds that
$\frac{d}{dt}V_t = 4-\chi/(2\pi)$.
Since $V_t$ is nonnegative, some kind of blow-up necessarily occurs before time 
$2\pi V_0/(\chi-8\pi)$ when $\chi$ is larger than the critical value $8\pi$.

\vip

Concerning the well-posedness theory, let us mention J\"ager and Luckhaus \cite{jl},
Blanchet, Dolbeault and Perthame \cite{bdp}, Dolbeault and Schmeiser \cite{ds} and Ega\~na and Mischler
\cite{em}. In particular, the existence of solutions is verified in \cite{jl} (for sufficiently smooth
initial conditions), these solutions being local (in time) if $\chi>0$ is large and global if $\chi>0$ is small.
The existence of a unique {\it strong} (in some precise sense) solution when $\chi < 8\pi$
is shown in \cite{bdp} (existence) and \cite{em} (uniqueness), still for reasonable initial conditions.
The main tool is the {\it free energy} and its relation with its time derivative.
By passing to the limit in a sequence of regularized Keller-Segel equations where the kernel $K$ is 
replaced by a bounded kernel and by introducing defect measures to take into account blow-up, 
the existence of generalized weak solutions to \eqref{ks} is checked in \cite{ds}, even when 
$\chi\ge 8\pi$. The blow-up phenomenon has been investigated by Herrero and Velazquez \cite{hv,v1,v2}.
We refer to Horstmann \cite{h1,h2} and Perthame \cite{pe} for review papers on this model.

\subsection{Weak solutions}
We denote by $\cP(\rd)$ the set of probability measures on $\rd$ and we set 
$\cP_1(\rd)=\{f \in \cP(\rd)\;:\; m_1(f)<\infty\}$, where $m_1(f)=\intrd |x|f(dx)$.
We will use the following notion of weak solutions.

\begin{defi}
Let $\chi>0$ and $T\in (0,\infty]$ be fixed. 
We say that a measurable family $(f_t)_{t\in [0,T)}$ of probability measures on $\rd$ 
is a weak solution to \eqref{ks} on $[0,T)$ if the following conditions hold true:

\vip

(a) for all $t\in [0,T)$, $\int_0^t\intrd\intrd |x-y|^{-1} f_s(dy)f_s(dx) ds<\infty$;

\vip

(b) for all $\phi \in C^2_b(\rd)$, all $t\in [0,T)$, 
\begin{align*}
\intrd \phi(x)f_t(dx) =& \intrd \phi(x)f_0(dx) + \intot \intrd \Delta \phi(x) f_s(dx) ds  \\
&+ \chi \intot\intrd\intrd K(x-y)\cdot \nabla \phi(x) f_s(dy)f_s(dx) ds.
\end{align*}
\end{defi}

Of course, (a) implies that everything makes sense in (b). 
Performing a symmetrization in the last term leads to 
another weak formulation of \eqref{ks} which requires less stringent 
integrability conditions, but which is not 
suitable in view of the following probabilistic interpretation. 

\subsection{The associated trajectories}

We now introduce a natural probabilistic interpretation of the Keller-Segel equation.

\begin{defi}\label{dfnsde}
Let $\chi>0$ and $T\in (0,\infty]$ be fixed. We say that a $\rd$-valued continuous process
$(X_t)_{t \in [0,T)}$ adapted to some filtration $(\cF_t)_{t\in[0,T)}$ 
solves the nonlinear SDE \eqref{nsde} on $[0,T)$ if, for $f_t:=\cL(X_t)$, it holds that 

\vip

(a) $\intot \intrd\intrd |x-y|^{-1} f_s(dy)f_s(dx)ds<\infty$ for all $t\in [0,T)$;

\vip

(b) there is a $2$-dimensional $(\cF_t)_{t\in[0,T)}$-Brownian motion $(B_t)_{t\in [0,T)}$ such that for all $t\in[0,T)$ 
\begin{equation}\label{nsde}
X_t = X_0 + \sqrt 2 B_t + \chi \intot (K\star f_s)(X_s)ds.
\end{equation}
\end{defi}

The main idea is that $(X_t)_{t \in [0,T)}$ represents the time-evolution of the position of a
{\it typical cell}, in an infinite system of cells undergoing the dynamics prescribed by the
Keller-Segel equation. 
The following remark immediately follows from the It\^o formula.

\begin{rk}\label{tlm}
Let $\chi>0$ be fixed.
For $(X_t)_{t \in [0,T)}$ solving the nonlinear SDE \eqref{nsde},
the family $(f_t=\cL(X_t))_{t\in [0,T)}$ is a weak solution to the Keller-Segel equation \eqref{ks}.
\end{rk}

\subsection{The particle system}
We next consider a natural discretization of the nonlinear SDE: we consider
$N\geq 2$ particles (cells) with positions $X^{1,N}_t,\dots,X^{N,N}_t$ solving
(recall that $K(0)=0$)
\begin{equation}\label{ps}
X^{i,N}_t=X^{i}_0 + \sqrt 2 B^i_t + \frac \chi N \sum_{j=1}^N \intot K(X^{i,N}_s-X^{j,N}_s) ds .
\end{equation}
More precisely, a solution on $[0,T)$ is a continuous $(\rd)^N$-valued process
$(X^{i,N}_t)_{i=1,\dots,N,t\in[0,T)}$ adapted to some filtration $(\cF_t)_{t\in[0,T)}$ if 
the initial conditions $X^{i}_0$, $i=1,\dots, N$ are i.i.d. with common law $f_0\in \cP(\rd)$ and if there 
is a $2N$-dimensional $(\cF_t)_{t\in[0,T)}$-Brownian motion $(B^1_t,\dots,B^N_t)_{t\geq 0}$ 
such that \eqref{ps} holds true for all
$t\in [0,T)$ and all $i=1,\dots,N$.

\vip

Of course, such a particle system is not clearly well-defined, due to the singularity of $K$.
Moreover, the singularity {\it is visited}, as shown by the following statement.

\begin{prop}\label{pairco}
For any $N\geq 2$, any $\chi>0$, any $f_0 \in \cP(\rr^2)$, any $t_0>0$ and any 
solution (if it exists) $(X^{i,N}_t)_{i=1,\dots,N,t \in [0,t_0]}$ to \eqref{ps}, 
$$
\Pro \Big(\exists \; s\in[0,t_0], \; \exists \; 1\leq i < j\leq N \; : \; X^{i,N}_s=X^{j,N}_s \Big) >0.
$$
\end{prop}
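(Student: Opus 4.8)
Let me think about this. We want to show that for any solution of the particle system, with positive probability, two particles collide before time $t_0$. The key point is that the drift term $K(x) = -x/(2\pi|x|^2)$ is *attractive* between particles (since $\chi > 0$ and the sign makes each particle pull others toward it).

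Let me focus on two particles, say $i=1$ and $j=2$. The natural approach is to look at the difference process $Z_t = X^{1,N}_t - X^{2,N}_t \in \rr^2$. Then $Z_t$ satisfies an SDE with a $2\sqrt{2}$-scaled Brownian motion (since $B^1 - B^2$ is, up to a constant, a 2D BM), a drift term $-\frac{2\chi}{N} \cdot \frac{Z_s}{2\pi|Z_s|^2} ds$ coming from the $j=1,2$ terms in the two equations, plus the "other" drift terms from $K(X^{1,N}_s - X^{k,N}_s)$ and $K(X^{2,N}_s - X^{k,N}_s)$ for $k \neq 1,2$. The first, singular drift is *attractive*: it pushes $|Z_t|$ toward $0$. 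The remaining drift terms are the trouble — a priori they need not be integrable, and they could be large near collisions of *other* pairs.

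Here is the plan. First, I would apply a localization/stopping argument: fix $R>0$ and let $\tau_R$ be the first time that $|Z_t| \geq R$, or some other pair gets within distance $1/R$, or $|X^{i,N}_t| \geq R$ for some $i$ — in short, a stopping time that makes all the "extra" drift terms bounded and keeps us in a nice region. On the event $\{\tau_R > 0\}$ (which has probability tending to $1$ as $R \to \infty$, once we've arranged that no collisions happen at time $0$ a.s. — which holds since the $X^i_0$ are i.i.d. with a law in $\cP(\rr^2)$ so they are a.s. distinct), we can compare $|Z_{t \wedge \tau_R}|$ (or rather some function of it) with a genuine Bessel-type process. Specifically, applying Itô's formula to $|Z_t|$ or to a suitable power/logarithm, the singular attractive drift produces a term like $-\frac{c}{|Z_t|}dt$ with $c>0$, which for a 2D-like process is exactly what forces hitting of the origin (recall a Bessel process of dimension $\delta < 2$ hits $0$; the attractive drift effectively lowers the dimension). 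The bounded extra drift can then be dominated by a Girsanov change of measure, or more elementarily absorbed into the comparison since on a short time interval a bounded drift cannot prevent a process that is "trying" to hit $0$ from doing so with positive probability.

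So concretely: (i) show $\Pro(X^i_0 \neq X^j_0 \ \forall i\neq j) = 1$ and reduce to working on a high-probability event where a convenient stopping time $\tau$ is positive; (ii) write the SDE for $Z_t = X^{1,N}_t - X^{2,N}_t$ and identify the attractive singular drift and the bounded-until-$\tau$ remainder; (iii) use Itô on an appropriate function $g(|Z_t|)$ — e.g. something behaving like $|Z_t|^\alpha$ or $\log|Z_t|$ — chosen so that the resulting drift of $g(|Z_{t\wedge\tau}|)$ is $\leq$ (bounded) and the martingale part is controlled, so that $g(|Z_{t\wedge\tau}|)$ is a supermartingale up to a bounded-variation correction; (iv) deduce that $|Z_{t\wedge\tau}|$ reaches $0$ (equivalently $g \to -\infty$ or $g \to +\infty$ depending on normalization) with positive probability before $t_0 \wedge \tau$. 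The main obstacle is step (ii)–(iii): controlling the extra drift terms $\sum_{k\neq 1,2} K(X^{1,N}_s - X^{k,N}_s) - K(X^{2,N}_s - X^{k,N}_s)$, which involve the singularity of $K$ at collisions of other pairs; the localization must be set up carefully so that these remain bounded (or at least integrable) on the relevant event, and one has to make sure this event still has positive probability — the cleanest route is probably a Girsanov argument on $[0, t_0 \wedge \tau]$ removing the bounded drift and then a direct computation for the two-particle system with only the attractive singular drift plus Brownian motion, for which collision with positive probability is a standard Bessel-process fact.
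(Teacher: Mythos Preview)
Your plan is close to the paper's own argument (localize, Girsanov away the bounded cross-drift, reduce to a Bessel process of dimension $2-\chi/(2\pi N)<2$), but two points need attention.

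First, a minor slip: i.i.d.\ with law $f_0\in\cP(\rr^2)$ does \emph{not} force $X^i_0\ne X^j_0$ a.s.\ unless $f_0$ is atomless. This is harmless --- if $\Pro(X^i_0=X^j_0)>0$ for some $i\ne j$ the statement is trivial --- but you should say so, or (as the paper does) argue by contradiction: assume no collision occurs on $[0,t_0]$ a.s., which in particular forces the initial positions to be pairwise distinct a.s., then derive a contradiction.

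Second, and this is the real gap: after Girsanov you obtain that $|Z_t|^2/4$ evolves as a squared Bessel process of dimension $<2$ \emph{only on} $[0,\tau]$, where $\tau$ is your localizing stopping time. You then need the event $\{\exists\,s\le t_0\wedge\tau:\ Z_s=0\}$ to have positive probability, and the difficulty is that $\tau$ depends on \emph{all} the particles, so the Bessel-like behaviour of $Z$ and the size of $\tau$ are entangled. Your sketch does not explain how to disentangle them. The paper's solution is to choose the Girsanov density so as to remove precisely the cross-interactions between $\{1,2\}$ and $\{3,\dots,N\}$: under the new measure $\tilde\Pro$, on $[0,\tau]$, the pair $(X^1,X^2)$ and the block $(X^3,\dots,X^N)$ are driven by \emph{independent} Brownian motions and satisfy \emph{autonomous} equations. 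One then extends these autonomous processes (a global squared Bessel process $\tilde Y$, a global Brownian motion $\gamma$ for $X^1+X^2$, and a global autonomous system $\tilde X^3,\dots,\tilde X^N$) to all of $[0,t_0]$, defines events $\Omega_1,\Omega_2,\Omega_3$ in terms of these extensions that jointly force $\tau>s_0$ and $\min_{[0,s_0]}\tilde Y=0$, and uses their $\tilde\Pro$-independence to get $\tilde\Pro(\Omega_1\cap\Omega_2\cap\Omega_3)>0$. Without this decoupling-and-extension step your argument is incomplete: a supermartingale or comparison bound on $g(|Z_{t\wedge\tau}|)$ alone does not yield $\Pro(Z \text{ hits }0 \text{ before }\tau)>0$, because $\tau$ could in principle always occur first.
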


However, we expect that particles are almost independent (for $N$ large) and look like $N$ copies of the 
solution to the nonlinear SDE, at least in the subcritical case $\chi\in(0,8\pi)$ or locally in time
in the supercritical case $\chi \geq 8\pi$.
This problem seems important, both from a physical point of view, as a step to the rigorous 
derivation of the Keller-Segel equation, and from a numerical point of view.

\subsection{Main results}

We first check that the particle system \eqref{ps} exists when $\chi$
is (very) subcritical.

\begin{thm}\label{pse}
Let  $N \geq 2$ and $\chi\in(0,2\pi N/(N-1))$ be fixed, as well as $f_0 \in \cP_1(\rd)$.
There exists a solution $(X^{i,N}_t)_{t\in [0,\infty),i=1,\dots,N}$ 
to \eqref{ps}.
Furthermore, the family $\{(X^{i,N}_t)_{t\in [0,\infty)},\; i=1,\dots,N\}$ is exchangeable and
for any $\alpha \in ((N-1)\chi/(2\pi N),1)$, any $T>0$,
\begin{align}\label{fund}
\E\Big[\int_0^T |X^{1,N}_s- X^{2,N}_s|^{\alpha-2} ds \Big] 
\leq \frac{(2\sqrt{2}\langle f_0,\sqrt{1+|x|^2}\rangle + 4\sqrt{2} T)^{\alpha}}
{\alpha(2\alpha- (N-1)\chi/(\pi N))}.
\end{align}
\end{thm}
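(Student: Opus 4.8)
The plan is to build the solution by a regularization procedure and to extract the key a priori estimate \eqref{fund} by an It\^o-type computation on a well-chosen function of the interparticle distances. First I would replace the singular kernel $K$ by a smooth bounded truncation $K_\e(x)=-x/(2\pi(|x|^2+\e^2))$, for which the particle system \eqref{ps} with $K$ replaced by $K_\e$ has a unique strong global solution $(X^{i,N,\e}_t)_{i=1,\dots,N}$ by standard Lipschitz SDE theory; this system is exchangeable since the $X^i_0$ are i.i.d. and the dynamics is symmetric. The main task is then to obtain a bound, uniform in $\e$, on $\E[\int_0^T|X^{1,N,\e}_s-X^{2,N,\e}_s|^{\alpha-2}ds]$, after which a tightness/compactness argument lets me pass to the limit $\e\to 0$.

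For the a priori estimate, I would apply It\^o's formula to $S^\e_t:=\sum_{1\le i<j\le N}|X^{i,N,\e}_t-X^{j,N,\e}_t|^{\alpha}$ (or work with a single pair and use exchangeability; the symmetric sum is cleaner because the drift cross-terms combine favorably). Writing $Z^{ij}_t=X^{i,N,\e}_t-X^{j,N,\e}_t$, the diffusion part contributes $\sqrt2(B^i_t-B^j_t)$ with quadratic variation $4\,dt$, so the generator applied to $|z|^\alpha$ produces the term $2\alpha\cdot(2\alpha-2+2)|z|^{\alpha-2}=\text{(something)}\cdot|z|^{\alpha-2}$ — more precisely $\frac12\cdot 4\cdot(\alpha^2|z|^{\alpha-2}) $ from the Laplacian in $\rd$ gives $2\alpha^2|z|^{\alpha-2}$ — wait, in $\rd$, $\Delta|z|^\alpha=\alpha^2|z|^{\alpha-2}$, so with the $\sqrt2$ factors this is $2\alpha^2|z|^{\alpha-2}$. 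The drift term $\frac{\chi}{N}\nabla|z|^\alpha\cdot\big(\sum_k K_\e(Z^{ik})-K_\e(Z^{jk})\big)$, when summed over all pairs $i<j$, reorganizes so that the ``diagonal'' interaction of the pair $(i,j)$ with itself yields $-\frac{\chi}{N}\cdot\frac{2}{2\pi}\cdot\alpha|z|^{\alpha-2}\cdot\frac{|z|^2}{|z|^2+\e^2}\le -\frac{\alpha\chi}{\pi N}|z|^{\alpha-2}$ (using $z\cdot K_\e(z)\le 0$, i.e. it is attractive and works against us), while the remaining ``three-particle'' cross-terms involving distinct indices can be bounded using $|K_\e(x)|\le 1/(2\pi|x|)$ together with the elementary inequality $|x|^{-1}(|y|^{\alpha-1}+\dots)$-type manipulations; here one uses that by the triangle inequality and Young's inequality these cross terms are controlled by a constant times $\sum|Z^{\cdot\cdot}|^{\alpha-1}$ and hence, since $\alpha<1$, by lower-order quantities. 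Collecting the $|z|^{\alpha-2}$ coefficients gives $2\alpha^2-\frac{(N-1)\chi\alpha}{\pi N}=\alpha(2\alpha-(N-1)\chi/(\pi N))>0$ by the assumption $\alpha>(N-1)\chi/(2\pi N)$, which is why this range of $\alpha$ is exactly the one that appears. Taking expectations, the martingale part vanishes (a localization argument is needed since $|z|^{\alpha-1}$ is only locally bounded near $0$, but $\alpha-1>-1$ keeps things integrable after the stopping), and one is left with
\begin{align*}
\alpha\Big(2\alpha-\frac{(N-1)\chi}{\pi N}\Big)\,\E\Big[\int_0^T |Z^{12}_s|^{\alpha-2}ds\Big] \le \binom{N}{2}^{-1}\E[S^\e_T] + (\text{lower order}),
\end{align*}
and a crude bound $|X^{i,N,\e}_t|\le |X^i_0|+\sqrt2|B^i_t|+ (\text{drift})$ with the drift controlled in expectation (this is where $f_0\in\cP_1$ and the explicit constant $2\sqrt2\langle f_0,\sqrt{1+|x|^2}\rangle+4\sqrt2 T$ enter, after using $\E|B_t|\le\sqrt{2t}$ and optimizing) finishes the estimate \eqref{fund}, uniformly in $\e$.

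The hard part will be the passage to the limit $\e\to 0$: one must show that the uniform bound \eqref{fund} prevents the regularized drift $\frac{\chi}{N}\int_0^t K_\e(X^{i,N,\e}_s-X^{j,N,\e}_s)ds$ from developing pathologies, i.e. that $\{(X^{i,N,\e})_{i,t}\}_\e$ is tight in $C([0,T],(\rd)^N)$ (Kolmogorov's criterion via the drift bound $\E|\text{drift}_t-\text{drift}_s|\le \frac{\chi}{2\pi N}\E\int_s^t|Z|^{-1}\lesssim |t-s|^{1-(1+\alpha-2)/(\text{something})}$, using $\alpha-2>-2$ and H\"older) and that along a convergent subsequence the drift term converges to $\chi\int_0^t K(X^{i,N}_s-X^{j,N}_s)ds$. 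For the latter, the standard obstruction is that $K$ is discontinuous at $0$; I would argue that the set of times where some $Z^{ij}_s=0$ has zero Lebesgue measure a.s. (which follows from \eqref{fund} passed to the limit via Fatou, since $|Z^{12}_s|^{\alpha-2}=+\infty$ there and the integral is finite), so $K_\e(Z^{ij}_s)\to K(Z^{ij}_s)$ for a.e. $s$, and then a uniform integrability argument (again powered by \eqref{fund} with $\alpha-2>-1$... actually with $\alpha-1$, via $|K_\e|\le 1/(2\pi|\cdot|)$ and $\sup_\e\E\int_0^T|Z^{12,\e}_s|^{-p}ds<\infty$ for $p=2-\alpha<1$... note $2-\alpha>1$, so one really needs $\alpha>1$ here — instead one uses $|K_\e(z)|\le|z|^{-1}$ and that $\E\int_0^T|Z^{12}_s|^{2-\alpha}\cdot|Z^{12}_s|^{\alpha-2}\,ds$ type splitting, or more simply dominated convergence since $|K_\e(Z^{ij}_s)|\mathbf 1_{|Z^{ij}_s|\le 1}$ is dominated along the subsequence by $|Z^{ij}_s|^{-1}\le |Z^{ij}_s|^{\alpha-2}$ which is integrable by \eqref{fund}) upgrades this to convergence in $L^1(\Omega\times[0,T])$. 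Exchangeability of the limit is inherited from that of each $\e$-system, and condition (a) in the definition of a solution is precisely \eqref{fund} for the limit. A mild technical point is that one should verify the limiting process is adapted to a filtration carrying the Brownian motions, which is routine since the convergence is in law and one can invoke a Skorokhod representation.
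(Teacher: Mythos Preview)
Your overall strategy---regularize with $K_\e$, derive the key estimate uniformly in $\e$ via It\^o's formula on $|Z^{ij}|^\alpha$, then pass to the limit by tightness, Skorokhod representation, and dominated convergence on the drift---is exactly the paper's approach, and your handling of the limit $\e\to 0$ is essentially correct (the paper uses the inequality $|K_\eta(x)-K(x)|\le \eta^{1-\alpha}|x|^{\alpha-2}/\pi$ together with the uniform bound to upgrade a.s.\ convergence to $L^1$). There are, however, two genuine gaps in your derivation of the a priori bound.

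First, the three-particle cross terms are \emph{not} lower order. After bounding $|K_\e(Z^{ik})|\le (2\pi)^{-1}|Z^{ik}|^{-1}$, the cross contribution to $d|Z^{ij}|^\alpha$ is of size $\frac{\alpha\chi}{2\pi N}|Z^{ij}|^{\alpha-1}|Z^{ik}|^{-1}$, which has homogeneity $\alpha-2$, not $\alpha-1$. The paper handles this (working with the pair $(1,2)$ and exchangeability) by H\"older with exponents $\tfrac{2-\alpha}{1-\alpha}$ and $2-\alpha$, yielding $\E\big[|Z^{12}|^{\alpha-1}|Z^{13}|^{-1}\big]\le\E\big[|Z^{12}|^{\alpha-2}\big]$; summing over the $N-2$ values of $k$ is exactly what produces the extra $(N-2)\chi/(\pi N)$ on top of the diagonal $\chi/(\pi N)$. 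Your claim that the cross terms are ``controlled by $\sum|Z^{\cdot\cdot}|^{\alpha-1}$ and hence lower-order'' is incompatible with the coefficient $2\alpha-(N-1)\chi/(\pi N)$ you then write down; if they were truly lower order the coefficient would be $2\alpha-\chi/(\pi N)$ and the condition on $\chi$ would be different. Second, your bound on $\E[S^\e_T]$ via ``$|X^{i,N,\e}_t|\le|X^i_0|+\sqrt2|B^i_t|+(\text{drift})$ with the drift controlled in expectation'' is circular: controlling $\E|\text{drift}|\lesssim\E\int_0^T|Z^{ij}|^{-1}ds$ is precisely (a H\"older interpolation of) the quantity you are trying to bound. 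The paper instead applies It\^o to $(1+|X^{1,N,\e}_t|^2)^{1/2}$ and uses oddness of $K_\e$ together with the elementary inequality $\big(x(1+|x|^2)^{-1/2}-y(1+|y|^2)^{-1/2}\big)\cdot(x-y)\ge 0$ to show the drift contribution is nonpositive after symmetrization, giving $\E[(1+|X^{1,N,\e}_t|^2)^{1/2}]\le\langle f_0,\sqrt{1+|x|^2}\rangle+2t$ directly; this is where the constant in \eqref{fund} comes from and why only $f_0\in\cP_1(\rd)$ is needed.
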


As already mentioned, such a result is not obvious, since $K$ is singular and since
its singularity is visited. The main point is to observe that \eqref{fund} {\it a priori} holds true 
for some $\alpha<1$. This will imply that
that $\E[|K(X^{1,N}_s- X^{2,N}_s)|]$ should be controlled 
(with some margin since $\alpha-2<-1$). This will be sufficient
to prove existence by compactness. The formal computation is as follows: by
the It\^o formula, for $\alpha\in (0,1)$, 
\begin{align}
\label{keycalc}d|&X^{1,N}_t-X^{2,N}_t|^\alpha=\sqrt{2}\alpha|X^{1,N}_t-X^{2,N}_t|^{\alpha-2}(X^{1,N}_t-X^{2,N}_t)
\cdot (dB^1_t-dB^2_t)\\
&+2\alpha^2|X^{1,N}_t-X^{2,N}_t|^{\alpha-2}dt-\frac{\alpha\chi}{\pi N}|X^{1,N}_t-X^{2,N}_t|^{\alpha-2}dt\notag\\
&+\frac{\alpha\chi}{2\pi N}|X^{1,N}_t-X^{2,N}_t|^{\alpha-2}(X^{1,N}_t-X^{2,N}_t)\cdot
\sum_{i=3}^N\Big(\frac{X^{i,N}_t-X^{1,N}_t}{|X^{i,N}_t-X^{1,N}_t|^2}+\frac{X^{2,N}_t-X^{i,N}_t}{|X^{2,N}_t-X^{i,N}_t|^2}
\Big)dt.\notag
\end{align}
The second term in the right-hand side is the It\^o correction due to diffusion, the third term is 
the contribution 
of the interaction between the particles $1$ and $2$ and the last term is the contribution of the 
interactions with between particles $1,2$ and the rest of the system.
By exchangeability and H\"older's inequality, the expectation of the last term in the right-hand 
side is greater
than $-[\alpha(N-2)\chi/(\pi N)]\E[|X^{1,N}_t-X^{2,N}_t|^{\alpha-2}]$. The assumption $\chi<2\pi N/(N-1)$ 
ensures us that the 
It\^o correction dominates the drift contribution. More precisely choosing $\alpha\in(\chi(N-1)/(2\pi N),1)$,
integrating in time and taking expectations, one obtains
$$
\alpha\Big(2\alpha -\frac{\chi(N-1)}{\pi N}\Big)\int_0^t\E[|X^{1,N}_s-X^{2,N}_s|^{\alpha-2}]ds
\leq \E[|X^{1,N}_t-X^{2,N}_t|^{\alpha}].
$$ 
The right-hand side is easily bounded, uniformly in $N$, using the oddness of $K$, whence
\eqref{fund}.
A similar computation was performed by Osada in \cite[Lemma 3.2]{o} for systems of stochastic
vortices.

\vip

Next, and this is the main result of the paper, we show some tightness/consistency as $N\to \infty$
in the (very) subcritical case $\chi<2\pi$. Such a result follows quite easily from the 
the bound \eqref{fund}, which is uniform in $N$ (when $\chi<2\pi$).
We endow $C([0,\infty),\rd)$ with the topology
of uniform convergence on compact time intervals, and $\cP(C([0,\infty),\rd))$ with the associated weak 
convergence topology. Finally, we endow $C([0,\infty),\cP(\rd))$ with the topology
of uniform convergence on compact time intervals associated with the weak convergence topology 
in $\cP(\rd)$.

\begin{thm}\label{tico}
Let $\chi\in(0,2\pi)$ be fixed, as well as $f_0 \in \cP_1(\rd)$.
For each $N\geq 2$, consider the particle system $(X^{i,N}_t)_{t\in [0,\infty),i=1,\dots,N}$ built in 
Theorem \ref{pse},
as well as the empirical measure $\mu^N=N^{-1}\sum_1^N \delta_{(X^{i,N}_t)_{t\in [0,\infty)}}$, 
which a.s. belongs to $\cP(C([0,\infty),\rd))$. For each $t\geq 0$, we also set
$\mu^N_t=N^{-1}\sum_1^N \delta_{X^{i,N}_t}$, which a.s. belongs to $\cP(\rd)$.

\vip

(i) The sequence $\{\mu^N, \; N\geq 2\}$ is tight in $\cP(C([0,\infty),\rd))$.

\vip

(ii) Any (possibly random) weak limit point $\mu$ of  $(\mu^N)_{N\geq 2}$ is a.s. the law of a solution to the 
nonlinear SDE
\eqref{nsde} with initial law $f_0$.

\vip

(iii) In particular, we can find a subsequence $N_k$ such that $(\mu^{N_k}_t)_{t\geq 0}$ goes in 
law, as $k\to \infty$,
in $C([0,\infty),\cP(\rd))$, to some $(\mu_t)_{t\geq 0}$, which is a.s. a weak solution 
to \eqref{ks} starting from $\mu_0=f_0$.
\end{thm}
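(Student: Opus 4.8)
The plan is to run a standard tightness/consistency argument for mean-field particle systems, with the caveat that the singular kernel $K$ must be handled via the uniform estimate \eqref{fund}. For part (i), I would first establish tightness of the laws of $(X^{1,N}_t)_{t\ge 0}$ in $C([0,\infty),\rd)$ using Aldous' criterion: the Brownian part is harmless, and for the drift part $\chi N^{-1}\sum_j \int_s^t K(X^{1,N}_u - X^{j,N}_u)\,du$ I would bound its increment in $L^1$ by $\chi\,\E\int_s^t |K(X^{1,N}_u - X^{2,N}_u)|\,du$ (using exchangeability from Theorem \ref{pse}) and then by $\chi\,(2\pi)^{-1}\E\int_s^t |X^{1,N}_u - X^{2,N}_u|^{-1}\,du$, which is controlled by \eqref{fund} with any $\alpha\in((N-1)\chi/(2\pi N),1)$ since $\alpha - 2 < -1$ and the right-hand side of \eqref{fund} is bounded uniformly in $N$ when $\chi<2\pi$. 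Since the map $\mu\mapsto$ (its one-dimensional time-marginal laws) and exchangeability give that tightness of $\cL(X^{1,N}_\bullet)$ plus the standard argument (e.g.\ Sznitman's lemma on tightness of empirical measures of exchangeable systems) yields tightness of $\{\mu^N\}$ in $\cP(C([0,\infty),\rd))$, part (i) follows.

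For part (ii), let $\mu$ be a weak limit point, realized (after passing to a subsequence and using Skorokhod representation) as an a.s. limit of $\mu^N$. I would identify $\mu$ as the law of a solution to \eqref{nsde} by checking that, under $\mu$, the canonical process $X$ satisfies the martingale problem associated with \eqref{nsde}: for $\phi\in C^2_b(\rd)$ and $0\le s_1\le\cdots\le s_k\le s<t$ and bounded continuous $\Psi$ on $(\rd)^k$, one wants
\[
\E_\mu\Big[\Big(\phi(X_t)-\phi(X_s) - \int_s^t\big(\Delta\phi(X_u)+\chi (K\star \mu_u)(X_u)\cdot\nabla\phi(X_u)\big)du\Big)\Psi(X_{s_1},\dots,X_{s_k})\Big]=0,
\]
where $\mu_u$ is the time-$u$ marginal of $\mu$. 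This expression is the $\mu^N$-expectation of an analogous functional of one particle (using \eqref{ps} and Itô), which vanishes up to a martingale term of order $N^{-1/2}$; passing to the limit requires continuity of the functional $\nu\mapsto \E_\nu[\cdots]$ at $\mu$, the only delicate point being the singular term $\int_s^t (K\star\nu_u)(X_u)\cdot\nabla\phi(X_u)\,du$. I would handle this by a truncation: replace $K$ by $K_\delta$ (bounded, $K_\delta=K$ outside $B(0,\delta)$), for which the functional is continuous, and control the error $\int_s^t\intrd |K-K_\delta|(X_u-y)\,\nu_u(dy)\,du$ uniformly in $\nu\in\{\mu^N\}$ by $\E\int_s^t |X^{1,N}_u-X^{2,N}_u|^{-1}\indiq_{\{|X^{1,N}_u-X^{2,N}_u|\le\delta\}}\,du \le \delta^{1-\alpha}\cdot(\text{RHS of }\eqref{fund})$, which $\to 0$ as $\delta\to 0$ uniformly in $N$; the same bound passes to the limit $\mu$, also giving condition (a) of Definition \ref{dfnsde} for $\mu$. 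I must also check $\mu_0=f_0$, which follows from the law of large numbers for the i.i.d.\ initial data, and that $m_1$ of the marginals stays finite, which follows from $f_0\in\cP_1(\rd)$ and the uniform-in-$N$ moment bound implicit in \eqref{fund} (via the numerator there). The main obstacle is precisely this uniform integrability of the singular drift in the limit; everything else is routine.

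Part (iii) is then immediate: from tightness in $\cP(C([0,\infty),\rd))$ extract a subsequence $N_k$ with $\mu^{N_k}\to\mu$ in law; the continuous map $C([0,\infty),\rd)\ni w\mapsto (w_t)_{t\ge0}$ pushes $\mu^{N_k}$ to the flow of marginals, and one checks that $\nu\mapsto(\text{its marginal flow})$ is continuous from $\cP(C([0,\infty),\rd))$ into $C([0,\infty),\cP(\rd))$ (uniform continuity on compacts, using e.g.\ a modulus of continuity coming from Aldous' estimate above to control $t\mapsto\nu_t$), so $(\mu^{N_k}_t)_{t\ge0}\to(\mu_t)_{t\ge0}$ in law in $C([0,\infty),\cP(\rd))$; by (ii), $\mu$ is the law of a solution to \eqref{nsde}, so by Remark \ref{tlm} its marginal flow $(\mu_t)_{t\ge 0}$ is a.s.\ a weak solution to \eqref{ks} with $\mu_0=f_0$.
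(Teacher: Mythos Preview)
Your proposal is correct and follows essentially the same route as the paper: tightness of $\mu^N$ via Sznitman's lemma reduced to tightness of one particle's law controlled through the drift by \eqref{fund}, then identification of the limit through the martingale problem with a truncation of $K$ whose error is bounded by $\delta^{1-\alpha}$ times the uniform estimate \eqref{fund}. The only cosmetic differences are that the paper uses a direct Ascoli argument (pathwise H\"older bound $|J_t-J_s|\le Z_T|t-s|^\beta$ with $\E[Z_T]$ bounded) rather than Aldous, and the paper regularizes with the smooth kernel $K_\eta(x)=-x/(2\pi(|x|^2+\eta^2))$ rather than a hard cutoff; both choices lead to the same $\eta^{1-\alpha}$ (resp.\ $\delta^{1-\alpha}$) error via $|K(x)-K_\eta(x)|\le \pi^{-1}\eta^{1-\alpha}|x|^{\alpha-2}$.
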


We are quite satisfied, since this result seems to be the first result
concerning the convergence of the {\it true} particle system (without cutoff) 
to the Keller-Segel equation. 
However, there are two main limitations. First, this result should more or less always hold true
in the subcritical case $\chi \in (0,8\pi)$.
Second, we are not able to prove the convergence,
we have only compactness/consistency. This is due to the fact that we are not able to
prove that our limit point $(\mu_t)_{t\geq 0}$ a.s. belongs to the class of weak solutions
in which uniqueness is known to hold true. Thanks to Ega\~na and Mischler \cite{em}, 
it would suffice to show that $(\mu_t)_{t\geq 0}$ satisfies the {\it free energy dissipation inequality},
which is slightly stronger than the requirement 
$(\mu_t)_{t\geq 0} \in \cap_{p\geq 1} L^1_{loc}([0,\infty),L^p(\rd))$ a.s.
We believe this is a very difficult problem.

\vip

We next prove that, when $\chi<2\pi N$, the particle system always exists until $3$ particles encounter. 
In view of \eqref{keycalc}, this is not surprising. Indeed, the assumption $\chi<2\pi N$ ensures us that the 
It\^o correction still dominates the contribution of the interaction between the particles $1$ and $2$. 
Moreover, it is not very hard to control the last term of \eqref{keycalc} until a $3$-particle collision occurs.

\begin{thm}\label{pse2}
Let $\chi>0$, $N > \max\{2,\chi/(2\pi)\}$ be fixed, as well as $f_0 \in\cP_1(\rd)$
such that $f_0(\{x\})=0$ for all $x\in \rd$. 
There exists a solution 
$(X^{i,N}_t)_{t\in [0,\tau_N),i=1,\dots,N}$ to \eqref{ps}, with
\begin{align*}
\tau_N= \sup_{\ell\geq 1} \;\inf \;\Big\{t\geq 0 \; : \; \exists \; i,j,k \;& \hbox{ pairwise different such that} \\
&|X^{i,N}_{t}-X^{j,N}_{t}|+|X^{j,N}_{t}-X^{k,N}_{t}|+|X^{k,N}_{t}-X^{i,N}_{t}|\leq 1/\ell\Big\}.
\end{align*}
The family $\{(X^{i,N}_t)_{t\in [0,\tau_N)},\; i=1,\dots,N\}$ is exchangeable and
for any $\alpha \in (\chi/(2\pi N),1)$,
\begin{align}\label{fund2}
\hbox{a.s., for all $t\in[0,\tau_N)$,}\quad
\int_0^{t} |X^{1,N}_s- X^{2,N}_s|^{\alpha-2} ds < \infty.
\end{align}
Finally, it holds that 
(i) $\tau_N=\infty$ a.s. if $\chi\leq 8\pi(N-2)/(N-1)$ and
(ii) $\tau_N<\infty$ a.s. if $\chi> 8\pi(N-2)/(N-1)$.
\end{thm}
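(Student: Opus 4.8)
The plan is to construct the solution on $[0,\tau_N)$ by a localization argument based on the It\^o computation \eqref{keycalc}, and then to analyze the finiteness of $\tau_N$ via a moment estimate on the "variance" $\sum_{i<j}|X^{i,N}_t-X^{j,N}_t|^2$, very much in the spirit of the formal blow-up computation recalled in the introduction.

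\textbf{Step 1: Local existence up to a 3-particle collision.} For $\ell\ge 1$, let $\sigma_\ell$ be the first time $|X^{i,N}_t-X^{j,N}_t|+|X^{j,N}_t-X^{k,N}_t|+|X^{k,N}_t-X^{i,N}_t|\le 1/\ell$ for some pairwise distinct $i,j,k$; then $\tau_N=\sup_\ell\sigma_\ell$. On the event $\{t<\sigma_\ell\}$, for any fixed pair the last sum in \eqref{keycalc} involves only terms $|X^{i,N}_t-X^{1,N}_t|^{-1}$ and $|X^{i,N}_t-X^{2,N}_t|^{-1}$ with $i\ge 3$; since particles $1,i$ and $2,i$ cannot both be close (else a 3-particle collision), each such term is either bounded by $\ell$ or controlled by $|X^{1,N}_t-X^{2,N}_t|^{-1}$ times a bounded factor. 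Hence, choosing $\alpha\in(\chi/(2\pi N),1)$ so that the It\^o correction $2\alpha^2$ dominates $\alpha\chi/(\pi N)$, one gets from \eqref{keycalc} after taking expectations and using exchangeability a bound of the form
\begin{align*}
\alpha\Big(2\alpha-\frac{\chi}{\pi N}\Big)\E\Big[\int_0^{t\wedge\sigma_\ell}|X^{1,N}_s-X^{2,N}_s|^{\alpha-2}ds\Big]
\le \E[|X^{1,N}_{t\wedge\sigma_\ell}-X^{2,N}_{t\wedge\sigma_\ell}|^\alpha]+C_\ell t,
\end{align*}
the right-hand side being finite (the $C_\ell t$ coming from the bounded part of the last term). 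This yields, by the same compactness scheme as in Theorem \ref{pse}, a solution on $[0,\sigma_\ell)$ for each $\ell$; a consistency/patching argument (using that solutions on $[0,\sigma_\ell)$ coincide up to $\sigma_{\ell-1}$, by a pathwise uniqueness-type argument away from the singularities, or simply by a diagonal extraction construction) produces a solution on $[0,\tau_N)$, and \eqref{fund2} follows by letting $\ell\to\infty$ in the bound above via monotone convergence. The hypothesis $f_0(\{x\})=0$ guarantees $\tau_N>0$ a.s.

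\textbf{Step 2: Finiteness of $\tau_N$.} Set $S_t:=\frac{1}{N}\sum_{i<j}|X^{i,N}_t-X^{j,N}_t|^2$, which up to a constant equals $\sum_i|X^{i,N}_t-\bar X_t|^2$ with $\bar X_t$ the empirical mean. By It\^o's formula, the Brownian part contributes a martingale plus a drift $4(N-1)\,dt$ (from the $2N$-dimensional diffusion, after removing the center of mass), while the interaction term telescopes: using $(x-y)\cdot K(x-y)=-1/(2\pi)$ for $x\ne y$, the drift contribution of the singular kernel is exactly $-\frac{\chi}{2\pi N}\cdot 2\cdot\frac{N(N-1)}{2}\cdot\frac1N\,dt = -\frac{\chi(N-1)}{2\pi N}\,dt$ on the event that no two particles coincide; one must be slightly careful because pairs do collide (Proposition \ref{pairco}), but each collision contributes a nonpositive term (the drift points inward), so one gets the \emph{inequality}
\begin{align*}
\E[S_{t\wedge\tau_N}] \le S_0 + \Big(4(N-1)-\frac{\chi(N-1)}{2\pi N}\Big)\E[t\wedge\tau_N]
= S_0 + \frac{(N-1)}{2\pi N}\big(8\pi N - \chi\big)\E[t\wedge\tau_N],
\end{align*}
and I should double-check the constant so that the bracket vanishes precisely at $\chi=8\pi(N-2)/(N-1)$ rather than at $8\pi N$ — the correct accounting of the diffusion drift restricted to the $2(N-1)$ "internal" degrees of freedom together with the interaction gives $4(N-2)-\frac{\chi(N-1)}{2\pi N}\cdot$(appropriate factor), so that the coefficient is $\frac{N-1}{2\pi N}\big(8\pi(N-2)/(N-1)\cdot\tfrac{?}{?}-\chi\big)$; I will recompute it carefully. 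Granting the sign structure: if $\chi>8\pi(N-2)/(N-1)$ the drift of $S$ is negative and bounded away from $0$, so $S_{t\wedge\tau_N}$ is a nonnegative supermartingale with strictly negative drift, which forces $\tau_N<\infty$ a.s. (else $\E[S_{t\wedge\tau_N}]\to-\infty$). Conversely, when $\chi\le 8\pi(N-2)/(N-1)$, one shows global existence by improving Step 1: one runs the computation \eqref{keycalc} with the full sum kept, and an exchangeability/H\"older estimate on the last term (as sketched after Theorem \ref{pse}, giving the factor $(N-2)\chi/(\pi N)$) shows the It\^o correction dominates globally, so $\sigma_\ell$ cannot accumulate, i.e. $\tau_N=\infty$.

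\textbf{Main obstacle.} The delicate point is Step 2: making rigorous the It\^o formula for $S_t$ (and for $|X^{1,N}_t-X^{2,N}_t|^\alpha$) \emph{despite} the fact that pair collisions genuinely occur, so that the process visits the singular set of the drift. One has to argue that the local time / singular contributions at pair collisions have a definite sign (inward-pointing drift $\Rightarrow$ they only help for the supermartingale property of $S$), and for the converse direction one must also ensure that $\tau_N$ defined via 3-particle collisions is indeed the right obstruction — i.e. that the solution can be continued through any pair collision, which is exactly what the $\alpha$-moment bound \eqref{fund2} with $\alpha-2<-1$ provides. Getting the two threshold constants to match ($2\pi N$ for two-particle control, $8\pi(N-2)/(N-1)$ for non-accumulation of 3-particle collisions) requires keeping careful track of the exchangeability constants in the last term of \eqref{keycalc}, and this bookkeeping is where I expect the real work to lie.
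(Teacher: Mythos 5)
Your Step 1 (localize via the three-particle distance, control the cross term in \eqref{keycalc} since particles $1,i$ and $2,i$ cannot both be close before $\tau_N$, then compactness and patching) is essentially the paper's construction, although the paper implements it by putting a cutoff $\Phi^N_\ell$ directly into the regularized system and proving a compatibility property between the levels $\ell$. The genuine gaps are in your Step 2, in both directions. For the finiteness of $\tau_N$ when $\chi>8\pi(N-2)/(N-1)$: the drift of $S_t$ (equivalently of $R^{I_N}_t=\tfrac12\sum_i|X^{i,N}_t-\bar X_t|^2$, for which the interaction telescopes exactly) is $(N-1)(2-\chi/(4\pi))$ per unit time, which changes sign at $\chi=8\pi$, not at $8\pi(N-2)/(N-1)$ --- this is not a bookkeeping issue you can fix. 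For $\chi\in(8\pi(N-2)/(N-1),8\pi)$ the drift is strictly positive, so no first-moment/nonnegative-supermartingale argument can force $\tau_N<\infty$. The paper's point is different: $(R^{I_N}_t)_{t<\tau_N}$ is \emph{exactly} a squared Bessel process of dimension $(N-1)(2-\chi/(4\pi))$, and this dimension is $<2$ precisely when $\chi>8\pi(N-2)/(N-1)$; a squared Bessel process of dimension $<2$ hits $0$ a.s. in finite time (a fluctuation effect, independent of the sign of the drift), and hitting $0$ means all particles coincide, hence $\tau_N<\infty$.

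For $\tau_N=\infty$ when $\chi\le 8\pi(N-2)/(N-1)$, your suggestion to rerun \eqref{keycalc} with the full sum and the H\"older/exchangeability bound only closes when $\chi(N-1)/(2\pi N)<1$, i.e. $\chi<2\pi N/(N-1)$, far below the claimed threshold; and in any case a bound of the type \eqref{fund2} on $\E\int|X^{1,N}_s-X^{2,N}_s|^{\alpha-2}ds$ controls pair proximity only and says nothing about three-particle collisions, which are exactly what defines $\tau_N$. The missing ingredient is the paper's Lemma \ref{lemnoncol3part}: a backward induction over subsets $I$ with $|I|=N,N-1,\dots,3$, showing that $R^I_t=\tfrac12\sum_{i\in I}|X^{i,N}_t-\bar X^I_t|^2$ never vanishes before $\tau_N$. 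Up to the external interaction, $R^I$ evolves as a squared Bessel process of dimension $(|I|-1)(2-\chi|I|/(4\pi N))$, and the hypothesis $\chi\le 8\pi(N-2)/(N-1)$ is exactly equivalent (by concavity in $|I|$, checking $|I|=3$ and $|I|=N$) to all these dimensions being $\ge 2$. The induction hypothesis (no collapse of any $(|I|+1)$-point subset) yields a lower bound on $\min_{j\notin I}R^{I_j}$ near a potential collapse of $I$, which bounds the external drift by $c\sqrt{R^I_t}$; one then dominates $R^I$ from below by a reflected, drift-perturbed squared Bessel process of dimension $\ge 2$ (via a Tanaka comparison and a Girsanov argument) that never reaches $0$. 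This cluster-by-cluster induction and comparison is the heart of the proof and is absent from your proposal; without it, the whole range $\chi\in[2\pi N/(N-1),\,8\pi(N-2)/(N-1)]$ is not covered.
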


This result thus in particular shows the global existence for the particle system in the subcritical case
$\chi<8\pi$ for all $N$ large enough. This result seems to be new, as well as our method to check it,
which is quite specific to the model.

\vip

As we will see in the proof of Lemma \ref{lemnoncol3part}-Step 2, for any subsystem  $I\subset\{1,\hdots,N\}$,
the process $R^I_t=2^{-1}\sum_{i\in I}|X^{i,N}_t-\bar X^I_t|^2$, where  $\bar X^{I}_t=|I|^{-1}\sum_{i\in I}X^{i,N}_t$, 
behaves like the square of a Bessel process of dimension
$(|I|-1)(2-(\chi|I|)/(4\pi N))$, when neglecting the contribution of the interaction 
with the other particles. Similar computations for $I=\{1,\dots,N\}$ were performed by
Ha\v{s}kovec and Schmeiser \cite[Page 139]{hs} and Fatkullin \cite[Page 89]{f}.
The condition $\chi\leq 8\pi(N-2)/(N-1)$ implies that for all
$|I| =3,\dots,N$, the dimension $(|I|-1)(2-(\chi|I|)/(4\pi N))$ is greater than $2$,
so that $R^I_t$ does never reach $0$: there are no collisions involving more than two particles.
Of course, the situation is actually much more complicated, since we have to justify that of all $I$, we can indeed
neglect the contribution of the interaction with the other particles.

\begin{rk}
When $\chi \in (0,8\pi)$, we thus show that, for $N$ large enough,
the particles labelled $1,2,3$ do a.s. never encounter. 
To extend the tightness/consistency result of 
Theorem \ref{tico} to some $\chi \in [2\pi,8\pi)$, we believe that a 
quantitative and uniform (in $N$) version
of this fact might be sufficient.
\end{rk}

Finally, we study the case of two particles $N=2$.
The average of the 
two positions is a two-dimensional Brownian motion and their difference $D_t$ follows an autonomous 
SDE with singular drift driven by a Brownian motion,
which can be seen as a natural two-dimensional 
generalization of a Bessel process of dimension $(2-\chi/(4\pi))$.
We show that the equation for $D_t$ (as well as \eqref{ps}) is nonsense when $\chi\geq 4\pi$,
in that there cannot exist global solutions. But this is only a small problem related to
the fact that a Bessel process with dimension $\delta \in (0,1]$ does not solve a {\it classical}
SDE, while its square does (see Revuz and Yor 
\cite[Exercise 1.26 p 451]{reyor}). We thus reformulate the equation in an adequate sense and in such
a way it has a unique solution (in law). We also prove that this solution is stuck at $0$
when $\chi \geq 8\pi$, while it reaches $0$ but escapes instantaneously when $\chi \in (0,8\pi)$.

\subsection{References}
Approximating a large particle system by a partial differential equation (for deriving the PDE)
or a partial differential equation by a large particle system (to compute numerically the solution of the PDE)
is now a classical topic, called {\it propagation of chaos}. This notion was introduced by 
Kac \cite{k} as a step to the rigorous justification of the Boltzmann equation.
When the interaction is regular, the situation is now well-understood, some important contributions
are due to McKean \cite{mc}, Sznitman \cite{s}, M\'el\'eard \cite{m}, Mischler and Mouhot \cite{mm}, etc.
The main idea is that one can generally prove true quantified convergence when the interaction
is Lipschitz continuous and tightness/consistency (and true unquantified convergence if the PDE is known to
have a unique solution) when the coefficients are only continuous. Of course, each PDE is specific and these
are only formal rules.

\vip

The case of singular interactions is much more complicated. In dimension one,
let us mention the works of Bossy-Talay \cite{bt} and Jourdain \cite{j} which concern the viscous Burgers equation
and more general scalar conservation laws (where particles interact through the Heaviside function) and of 
Cepa-L\'epingle \cite{cl} on the very singular
Dyson model.

\vip

A model closely related to the one studied in the present paper is the $2d$-vortex model, that approximates the 
vorticity formulaltion of 
the $2d$-incompressible Navier-Stokes equation. The PDE is the same as \eqref{ks}
and the particle system is the same as \eqref{ps}, replacing everywhere the kernel $K$, see \eqref{K}, by
the Biot and Savart kernel $x^\perp/(2\pi|x|^2)$. This kernel is as singular as $K$, but the interaction is
of course not {\it attractive}, so that the situation is simpler. In particular, there is no blow-up for the PDE
and Osada \cite{o0}
has shown that the particle system is well-posed and that particles do never collide.
Osada \cite{o1,o} has also proved the (true but unquantified) convergence
of the particle system to the solution of the PDE when $\chi$ is sufficiently small (in our notation),
and this limitation has been recently removed in \cite{fhm}. The method developed in 
\cite{fhm} relies on a control of the Fisher information of the law of the particle system provided
by the dissipation of its entropy. It has been applied to a {\it subcritical} Keller-Segel equation by 
Godinho-Quininao \cite{gq}, where $K$ is replaced by $-x/(2\pi|x|^{1+\alpha})$ with some $\alpha \in (0,1)$
and to the Landau equation for moderately soft potentials in \cite{fh}.
Let us finally mention the propagation of chaos results for some particle systems
with deterministic dynamics 
by Marchioro-Pulvirenti \cite{mp} (for the $2d$-Euler equation) by Hauray-Jabin \cite{hj} (for
some singular Vlasov equations) and by Jourdain-Reygner \cite{jr} (for diagonal hyperbolic systems).

\vip

In the above mentioned works, some true convergence is derived. Here, we obtain only a 
tightness/consistency result, but the singularity is really strong and attractive.
Concerning the Keller-Segel equation, we are not aware of
papers dealing with the convergence of the true particle system {\it without any cutoff}.
Stevens \cite{s} studies a physically more convincing particle system with two kinds of particles
(for bacteria and chemo-attractant particles). She proves the convergence 
of this particle system when the kernel $K$ is regularized. In \cite{hs}, Ha\v{s}kovec and Schmeiser
also prove some results for a regularized kernel of the form $K_\e(x)=-x/[|x|(|x|+\e)]$. Finally, 
Godinho-Quininao \cite{gq} study the case where $K$ is replaced by 
$-x/(2\pi|x|^{1+\alpha})$ for some $\alpha \in (0,1)$.

\subsection{Plan of the paper}
In the next section, we prove \eqref{fund} for a regularized particle system. This is the
main tool for the proofs of Theorem \ref{pse} (existence for the particle system when $\chi\in (0,2\pi)$) and
Theorem \ref{tico} (tightness/consistency as $N\to \infty$ when $\chi\in (0,2\pi)$) given in Section \ref{tc}, 
as well as for checking Theorem \ref{pse2} (local or global existence for the particle system in the general case)
in Section \ref{local}.
We establish Proposition \ref{pairco} (positive probability of collisions) in Section \ref{posprobcol}.
Section \ref{222} is devoted to a detailed study of the case $N=2$.
Finally, we quickly and formally discuss in Section \ref{Nps} how to build an relevant
$N$-particle system when $\chi\geq 8\pi(N-2)/(N-1)$ and we explain why it seems to be a difficult problem.

\section{A regularized particle system}\label{rps}

Let $f_0 \in \cP_1(\rd)$, $\chi>0$ and $N\geq 2$ be fixed.
We consider a family $X^{i}_0$, $i=1,\dots, N$ of $f_0$-distributed random variables
and a family $(B^i_t)_{t\geq 0}$,  $i=1,\dots, N$ of $2$-dimensional Brownian motions,
all these random objects being independent. For $\e\in(0,1)$, we define the regularized version 
$K_\e$ of $K$ as
\begin{align}\label{ke}
K_\e(x)= \frac{-x}{2\pi (|x|^2+\e^2)}.
\end{align}
This kernel is globally Lipschitz continuous, so that the particle system 
\begin{equation}\label{psee}
X^{i,N,\e}_t=X^{i}_0 + \sqrt 2 B^i_t + \frac \chi N \sum_{j=1}^N \intot K_\e(X^{i,N,\e}_s-X^{j,N,\e}_s) ds
\end{equation}
is strongly and uniquely well-defined. These particles are furthermore clearly exchangeable.
The following estimates are crucial for our study.

\begin{prop}\label{cru}
For $f_0\in \cP_1(\rd)$, $N\geq 2$ and $\e\in(0,1)$, we consider the
unique solution $(X^{i,N,\e}_t)_{t\geq 0,i=1,\dots,N}$ to \eqref{psee}.

\vip

(i) For all $t\geq 0$,
$\E[(1+|X^{1,N,\e}_{t}|^2)^{1/2}] \leq \langle f_0,\sqrt{1+|x|^2}\rangle + 2 t.$

\vip

(ii) For all $\alpha \in(0,1)$, all $T>0$,
all $\eta\in (0,\e]$,
\begin{align*}
&\Big(2 \alpha - \frac\chi{\pi N}\Big) 
\E\Big[ \int_0^{T} (|X^{1,N,\e}_s - X^{2,N,\e}_s|^2+\eta^2)^{\alpha/2-1} ds\Big]
\leq \frac {(2\sqrt{2}\langle f_0,\sqrt{1+|x|^2}\rangle + 4\sqrt{2}T)^{\alpha}}\alpha \\
&\hskip1.5cm + \frac{(N-2)\chi}{\pi N}
\E\Big[ \int_0^{T} (|X^{1,N,\e}_s - X^{2,N,\e}_s|^2+\eta^2)^{(\alpha-1)/2}
(|X^{1,N,\e}_s - X^{3,N,\e}_s|^2+\e^2)^{-1/2} ds\Big].
\end{align*}
\end{prop}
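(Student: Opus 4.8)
The plan is to prove both estimates by applying the It\^o formula to well‑chosen smooth functions of the globally well‑posed regularized system \eqref{psee}, exploiting the oddness of $K_\e$ and a convexity argument for (i), and carrying out rigorously, with the extra parameter $\eta$, the formal computation \eqref{keycalc} for (ii).

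For (i), I would apply It\^o to $\Phi_t:=\sum_{i=1}^N\phi(X^{i,N,\e}_t)$ with $\phi(x)=(1+|x|^2)^{1/2}$. Since $|\nabla\phi|\le 1$ and $K_\e$ is bounded, all the relevant stochastic integrals are true martingales. One computes $\Delta\phi(x)=(2+|x|^2)(1+|x|^2)^{-3/2}\le 2$, so the It\^o correction adds at most $2N\,dt$. For the drift, a symmetrization in $(i,j)$ using $K_\e(-z)=-K_\e(z)$ turns $\frac{\chi}{N}\sum_{i,j}\nabla\phi(X^i)\cdot K_\e(X^i-X^j)$ into $-\frac{\chi}{4\pi N}\sum_{i,j}(|X^i-X^j|^2+\e^2)^{-1}\big(\nabla\phi(X^i)-\nabla\phi(X^j)\big)\cdot(X^i-X^j)$, which is $\le 0$ because $\phi$ is convex (one checks $v^\top D^2\phi(x)v\ge(1+|x|^2)^{-3/2}|v|^2\ge 0$). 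Taking expectations and using exchangeability gives $N\,\E[\phi(X^{1,N,\e}_t)]=\E[\Phi_t]\le\E[\Phi_0]+2Nt=N\langle f_0,\sqrt{1+|x|^2}\rangle+2Nt$, which is (i).

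For (ii), I would apply It\^o to $\psi_\eta(D_t)$ with $D_t:=X^{1,N,\e}_t-X^{2,N,\e}_t$ and $\psi_\eta(x)=(|x|^2+\eta^2)^{\alpha/2}$. The martingale part of $D$ has bracket $4t$ per coordinate, so the It\^o correction is $2\Delta\psi_\eta(D_s)\,ds$, and for $\alpha\in(0,1)$ one has $\Delta\psi_\eta(x)=\alpha^2(|x|^2+\eta^2)^{\alpha/2-1}+\alpha(2-\alpha)\eta^2(|x|^2+\eta^2)^{\alpha/2-2}\ge\alpha^2(|x|^2+\eta^2)^{\alpha/2-1}$. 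The drift of $D_t$ is $\frac{\chi}{N}\big[2K_\e(D_t)+\sum_{j\ge 3}(K_\e(X^1-X^j)-K_\e(X^2-X^j))\big]$; dotting with $\nabla\psi_\eta(D_t)=\alpha(|D_t|^2+\eta^2)^{\alpha/2-1}D_t$, the pair term equals $-\frac{\alpha\chi}{\pi N}(|D_t|^2+\eta^2)^{\alpha/2-1}\frac{|D_t|^2}{|D_t|^2+\e^2}\ge-\frac{\alpha\chi}{\pi N}(|D_t|^2+\eta^2)^{\alpha/2-1}$, while the remaining terms are bounded below, using $|K_\e(z)|\le\frac1{2\pi}(|z|^2+\e^2)^{-1/2}$ and $(|D_t|^2+\eta^2)^{\alpha/2-1}|D_t|\le(|D_t|^2+\eta^2)^{(\alpha-1)/2}$, by $-\frac{\alpha\chi}{2\pi N}(|D_t|^2+\eta^2)^{(\alpha-1)/2}\sum_{j\ge 3}\big[(|X^1-X^j|^2+\e^2)^{-1/2}+(|X^2-X^j|^2+\e^2)^{-1/2}\big]$. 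Since $|\nabla\psi_\eta|\le\alpha\eta^{\alpha-1}$ is bounded the stochastic integral has zero mean; taking expectations, collapsing the $j$‑sums into $2(N-2)$ copies of the $j=3$ term by exchangeability (permuting $1\leftrightarrow 2$ for the second family), dropping $\E[\psi_\eta(D_0)]\ge 0$, and dividing by $\alpha$ then yields the asserted inequality with $\frac1\alpha\E[\psi_\eta(D_T)]$ in place of its first right‑hand term.

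It remains to bound $\E[\psi_\eta(D_T)]$ by the stated constant. Since $\eta\le\e<1$ and $|D_T|^2\le 2|X^1_T|^2+2|X^2_T|^2$, one has $|D_T|^2+\eta^2\le 2(1+|X^1_T|^2)+2(1+|X^2_T|^2)\le 2\big(\sqrt{1+|X^1_T|^2}+\sqrt{1+|X^2_T|^2}\big)^2$, hence $\psi_\eta(D_T)\le 2^{\alpha/2}\big(\sqrt{1+|X^1_T|^2}+\sqrt{1+|X^2_T|^2}\big)^\alpha$; concavity of $t\mapsto t^\alpha$, exchangeability and part (i) give $\E[\psi_\eta(D_T)]\le 2^{\alpha/2}\big(2\,\E[\sqrt{1+|X^1_T|^2}]\big)^\alpha\le 2^{3\alpha/2}\big(\langle f_0,\sqrt{1+|x|^2}\rangle+2T\big)^\alpha=(2\sqrt2\langle f_0,\sqrt{1+|x|^2}\rangle+4\sqrt2 T)^\alpha$, which is exactly what is needed. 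The point requiring the most care is that every manipulation in the It\^o decomposition of $\psi_\eta(D_t)$ must preserve a \emph{lower} bound: one keeps the full $2\alpha^2$ It\^o correction, discards the nonnegative $\eta$‑term of $\Delta\psi_\eta$, and replaces $|D_t|^2/(|D_t|^2+\e^2)$ by $1$ with the correct sign; in parallel, one must carry the constraint $\eta\le\e<1$ through the whole argument so that the martingale property holds and the final constant comes out as stated.
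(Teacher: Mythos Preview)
Your proposal is correct and follows essentially the same route as the paper: It\^o's formula applied to $(1+|x|^2)^{1/2}$ with a symmetrization exploiting the oddness of $K_\e$ for (i), and It\^o's formula applied to $(|D_t|^2+\eta^2)^{\alpha/2}$ with the lower bound $\Delta\psi_\eta\ge\alpha^2(|x|^2+\eta^2)^{\alpha/2-1}$, the split of the drift into the pair term and the $j\ge3$ terms, and the moment bound via (i) for (ii). The only cosmetic differences are that the paper works with a single particle in (i) and invokes exchangeability (proving the monotonicity $(x/\sqrt{1+|x|^2}-y/\sqrt{1+|y|^2})\cdot(x-y)\ge0$ algebraically rather than via the Hessian), and that for the pair term in (ii) the paper bounds $|K_\e(D_t)|\le(2\pi)^{-1}(|D_t|^2+\eta^2)^{-1/2}$ using $\eta\le\e$, whereas you use $|D_t|^2/(|D_t|^2+\e^2)\le1$, which does not even require that constraint.
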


\begin{proof}
We start with point (i).
Using the It\^o formula  (with $\phi(x)=(1+|x|^2)^{1/2}$ whence $\nabla \phi(x)=(1+|x|^2)^{-1/2} x$ and 
$\Delta \phi(x)=(1+|x|^2)^{-3/2} (2+|x|^2)$ and
taking expectations, we find
\begin{align*}
\E[(1+|X^{1,N,\e}_{t}|^2)^{1/2}] = &\E[(1+|X^1_0|^2)^{1/2}] + 
\E\Big[ \int_0^{t} \frac{2+|X^{1,N,\e}_{t}|^2}{(1+|X^{1,N,\e}_{t}|^2)^{3/2}} ds \Big]\\
& + \frac{\chi}{N} \sum_{j\ne 1} 
\E\Big[ \int_0^{t} \frac{X^{1,N,\e}_s}{(1+|X^{1,N,\e}_s|^2)^{1/2}}\cdot 
K_\e(X^{1,N,\e}_s - X^{j,N,\e}_s ) ds \Big].
\end{align*}
By exchangeability and oddness of $K_\e$, for $j\in\{2,\hdots,N\}$,
\begin{align*}
   \E&\Big[\int_0^{t}\frac{X^{1,N,\e}_s}{(1+|X^{1,N,\e}_s|^2)^{1/2}}\cdot 
K_\e(X^{1,N,\e}_s - X^{j,N,\e}_s )ds\Big]\\&=\frac 1 2 
\E\Big[\int_0^{t}\Big(\frac{X^{1,N,\e}_s}{(1+|X^{1,N,\e}_s|^2)^{1/2}}-\frac{X^{j,N,\e}_s}{(1+|X^{j,N,\e}_s|^2)^{1/2}}\Big) 
\cdot K_\e(X^{1,N,\e}_s - X^{j,N,\e}_s ) ds \Big]
\end{align*}
This last expectation is non-positive since for $x,y\in\rd$, the inequality $|x|^4+|y|^4\geq 2|x|^2|y|^2$ 
implies $(|x|^2(1+|y|^2)^{1/2}+|y|^2(1+|x|^2)^{1/2})^2\geq (|x||y|((1+|y|^2)^{1/2}+(1+|x|^2)^{1/2}))^2$,
whence $(x(1+|y|^2)^{1/2}-y(1+|x|^2)^{1/2})\cdot(x-y)\geq 0$ and thus 
$(x(1+|x|^2)^{-1/2}-y(1+|y|^2)^{-1/2})\cdot(x-y)\geq 0$. Hence
$$
\E[(1+|X^{1,N,\e}_{t}|^2)^{1/2}] = \E[(1+|X^1_0|^2)^{1/2}] + 
\E\Big[ \int_0^{t} \frac{2+|X^{1,N,\e}_{t}|^2}{(1+|X^{1,N,\e}_{t}|^2)^{3/2}} ds \Big]
\leq \E[(1+|X^1_0|^2)^{1/2}] + 2t.
$$
as desired. To prove point (ii), we fix $\alpha\in(0,1)$ and start from
$$
X^{1,N,\e}_t-X^{2,N,\e}_t=X^{1}_0-X^{2}_0 + \sqrt 2 (B^1_t-B^2_t) + \chi R^{12}_t + \chi S^{12}_t,
$$
where $R^{12}_t = N^{-1} \sum_{j=3}^N\intot [ K_\e(X^{1,N,\e}_s - X^{j,N,\e}_s )- K_\e(X^{2,N,\e}_s - X^{j,N,\e}_s )] ds$
and where  $S^{12}_t= 2N^{-1} \intot K_\e(X^{1,N,\e}_s - X^{2,N,\e}_s ) ds$.
We next fix $\eta \in(0,\e]$, introduce $\phi_\eta(x)=(|x|^2+\eta^2)^{\alpha/2}$ and
use the It\^o formula to write
\begin{align*}
\E[\phi_\eta(X^{1,N,\e}_{T}-X^{2,N,\e}_{T})]=&\E[\phi_\eta(X^{1}_0-X^{2}_0)]
+ \E\Big[ \int_0^{T} 2\Delta \phi_\eta( X^{1,N,\e}_s - X^{2,N,\e}_s) ds \Big]\\
& + \chi\E\Big[ \int_0^{T}\nabla \phi_\eta(X^{1,N,\e}_s - X^{2,N,\e}_s) \cdot (dR^{12}_s+dS^{12}_s)  \Big] .
\end{align*}
Since $\eta\in (0,1)$, we have $\phi_\eta(x-y)\leq [\sqrt 2 ((1+|x|^2)^{1/2}+(1+|y|^2)^{1/2}))]^{\alpha}$,
whence $\E[\phi_\eta(X^{1,N,\e}_{T}-X^{2,N,\e}_{T})] \leq (2\sqrt{2}\langle f_0,\sqrt{1+|x|^2}\rangle 
+ 4\sqrt{2}T)^{\alpha}$ by (i). 
Since furthermore
$\E[\phi_\eta(X^{1}_0-X^{2}_0)]\geq 0$,
\begin{align}\label{a1}
&\E\Big[ \int_0^{T} 2\Delta \phi_\eta( X^{1,N,\e}_s - X^{2,N,\e}_s) ds \Big] \\
\leq & (2\sqrt{2}\langle f_0,\sqrt{1+|x|^2}\rangle + 4\sqrt{2} T)^{\alpha} - \chi
\E\Big[ \int_0^{T}\nabla \phi_\eta(X^{1,N,\e}_s - X^{2,N,\e}_s) \cdot (dR^{12}_s+dS^{12}_s) \Big].\nonumber
\end{align}
Using exchangeability and recalling the definition of $R^{12}_t$,
\begin{align*}
&- \E\Big[ \int_0^{T}\nabla \phi_\eta(X^{1,N,\e}_s - X^{2,N,\e}_s) \cdot dR^{12}_s  \Big]\\
\leq & \frac{2(N-2)}{N} \E\Big[ \int_0^{T} |\nabla \phi_\eta(X^{1,N,\e}_s - X^{2,N,\e}_s)|
|K_\e(X^{1,N,\e}_s - X^{3,N,\e}_s)|ds\Big].
\end{align*}
But $\nabla \phi_\eta(x)=\alpha (|x|^2+\eta^2)^{\alpha/2-1}x$, whence
$|\nabla \phi_\eta(x)| \leq \alpha (|x|^2+\eta^2)^{\alpha/2-1/2}$. Furthermore,
$|K_\e(x)|\leq (|x|^2+\e^2)^{-1/2}/(2\pi)$. Hence 
\begin{align}\label{a2}
&- \E\Big[ \int_0^{T}\nabla \phi_\eta(X^{1,N,\e}_s - X^{2,N,\e}_s) \cdot dR^{12}_s  \Big]\\
\leq& \frac{(N-2)\alpha }{\pi N} \E\Big[ \int_0^{T} (|X^{1,N,\e}_s - X^{2,N,\e}_s|^2+\eta^2)^{\alpha/2-1/2}
(|X^{1,N,\e}_s - X^{3,N,\e}_s|^2+\e^2)^{-1/2} ds\Big].\nonumber
\end{align}
Recalling the definition of $S^{12}_t$ and using that $|K_\e(x)|\leq 
(|x|^2+\eta^2)^{-1/2}/(2\pi)$,
\begin{align}\label{a3}
&- \E\Big[ \int_0^{T}\nabla \phi_\eta(X^{1,N,\e}_s - X^{2,N,\e}_s) \cdot dS^{12}_s  \Big]
\leq \frac{\alpha }{\pi N} \E\Big[ \int_0^{T} (|X^{1,N,\e}_s - X^{2,N,\e}_s|^2+\eta^2)^{\alpha/2-1} ds\Big].
\end{align}
Finally, we observe that $\Delta \phi_\eta(x)=\alpha (|x|^2+\eta^2)^{\alpha/2-2} (\alpha |x|^2+2\eta^2)
\geq \alpha^2 (|x|^2+\eta^2)^{\alpha/2-1}$. Inserting this into \eqref{a1} and using \eqref{a2} and \eqref{a3},
we find
\begin{align}\label{ttt}
&2\alpha^2 \E\Big[ \int_0^{T} (|X^{1,N,\e}_s - X^{2,N,\e}_s|^2+\eta^2)^{\alpha/2-1} ds\Big] \\
\leq& (2\sqrt{2}\langle f_0,\sqrt{1+|x|^2}\rangle + 4\sqrt{2} T)^{\alpha} + \frac{\alpha\chi}{\pi N} 
\E\Big[ \int_0^{T} (|X^{1,N,\e}_s - X^{2,N,\e}_s|^2+\eta^2)^{\alpha/2-1} 
ds\Big] \nonumber \\
&+ \frac{(N-2)\alpha\chi}{\pi N} \E\Big[ \int_0^{T} (|X^{1,N,\e}_s - X^{2,N,\e}_s|^2+\eta^2)^{(\alpha-1)/2}
(|X^{1,N,\e}_s - X^{3,N,\e}_s|^2+\e^2)^{-1/2} ds\Big].\nonumber
\end{align}
The conclusion immediately follows.
\end{proof}

\section{Tightness and consistency in the (very) subcritical case}\label{tc}

The aim of this section is to prove Theorems \ref{pse} and \ref{tico}.
In the whole section, $f_0 \in \cP_1(\rd)$ is fixed.
First, we deduce from Proposition \ref{cru}
an estimate saying that in some sense,
particles do not meet too much, uniformly in $N\geq 2$ and $\e\in(0,1)$ when $\chi<2\pi$.

\begin{cor}\label{corcru}
For each $N\geq 2$, each $\chi\in (0,2\pi N/(N-1))$ and each $\e\in(0,1)$, consider the
unique solution $(X^{i,N,\e}_t)_{t\geq 0,i=1,\dots,N}$ to \eqref{psee}.
For all $T>0$ and all $\alpha \in (\chi (N-1)/(2\pi N),1)$,
$$
\E\Big[\int_0^T |X^{1,N,\e}_s- X^{2,N,\e}_s|^{\alpha-2} ds \Big] 
\leq\frac{(2\sqrt{2}\langle f_0,\sqrt{1+|x|^2}\rangle + 4\sqrt{2} T)^{\alpha}}
{\alpha(2\alpha - (N-1)\chi /(\pi N))}.
$$
\end{cor}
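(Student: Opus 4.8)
The plan is to deduce Corollary~\ref{corcru} from Proposition~\ref{cru} by choosing the free parameter $\eta$ equal to $\e$ and then absorbing the troublesome last term on the right-hand side back into the left-hand side. Concretely, I would set $\eta=\e$ in Proposition~\ref{cru}(ii), so that the ``cross'' term becomes
$$
\frac{(N-2)\chi}{\pi N}\,\E\Big[\int_0^T (|X^{1,N,\e}_s-X^{2,N,\e}_s|^2+\e^2)^{(\alpha-1)/2}(|X^{1,N,\e}_s-X^{3,N,\e}_s|^2+\e^2)^{-1/2}\,ds\Big].
$$

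\medskip

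The key step is then to bound this cross term by a multiple of the left-hand side quantity, using exchangeability. The integrand is a product of $(|X^{1}-X^{2}|^2+\e^2)^{(\alpha-1)/2}$ and $(|X^{1}-X^{3}|^2+\e^2)^{-1/2}$; by Young's inequality $ab\le \frac{\alpha-1+1}{\dots}$ — more precisely, since the two negative exponents $(\alpha-1)/2$ and $-1/2$ add up to $\alpha/2-1$, I would write $(|X^1-X^2|^2+\e^2)^{(\alpha-1)/2}(|X^1-X^3|^2+\e^2)^{-1/2}\le \tfrac{1}{2}\big[(|X^1-X^2|^2+\e^2)^{\alpha/2-1}+(|X^1-X^3|^2+\e^2)^{\alpha/2-1}\big]$ when $\alpha\le 1$ (this is just $a^{1-\theta}b^{\theta}\le (1-\theta)a+\theta b$ applied with the right exponents, or even more simply the bound $uv\le\frac12(u^2+v^2)$ after rewriting as squares — in any case an elementary two-term AM-GM). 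Taking expectations and using exchangeability of the particles, both $\E[\int_0^T(|X^1-X^2|^2+\e^2)^{\alpha/2-1}ds]$ and $\E[\int_0^T(|X^1-X^3|^2+\e^2)^{\alpha/2-1}ds]$ equal the same quantity $I_\e:=\E[\int_0^T(|X^{1,N,\e}_s-X^{2,N,\e}_s|^2+\e^2)^{\alpha/2-1}ds]$, so the cross term is bounded by $\frac{(N-2)\chi}{\pi N}I_\e$.

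\medskip

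Plugging this back into Proposition~\ref{cru}(ii) with $\eta=\e$ gives
$$
\Big(2\alpha-\frac{\chi}{\pi N}\Big)I_\e\le \frac{(2\sqrt2\langle f_0,\sqrt{1+|x|^2}\rangle+4\sqrt2 T)^\alpha}{\alpha}+\frac{(N-2)\chi}{\pi N}I_\e,
$$
hence $\big(2\alpha-\frac{(N-1)\chi}{\pi N}\big)I_\e\le \alpha^{-1}(2\sqrt2\langle f_0,\sqrt{1+|x|^2}\rangle+4\sqrt2 T)^\alpha$. The hypothesis $\alpha>\chi(N-1)/(2\pi N)$ makes the coefficient $2\alpha-(N-1)\chi/(\pi N)$ strictly positive, so one may divide to get the stated bound for $I_\e$. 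Finally, since $|X^1-X^2|^{\alpha-2}\le(|X^1-X^2|^2+\e^2)^{(\alpha-2)/2}=(|X^1-X^2|^2+\e^2)^{\alpha/2-1}$ (because $\alpha-2<0$), the integral $\E[\int_0^T|X^{1,N,\e}_s-X^{2,N,\e}_s|^{\alpha-2}ds]$ is dominated by $I_\e$, which yields the corollary. The only point requiring any care is the elementary inequality decoupling the cross term so that exchangeability applies cleanly; everything else is bookkeeping, and the bound is manifestly uniform in $N\ge2$ and $\e\in(0,1)$ once $\chi<2\pi$ forces $\alpha$ to be taken in a nonempty subinterval of $(0,1)$.
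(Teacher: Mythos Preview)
Your overall strategy is close to the paper's, but there are two errors, one minor and one genuine.

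First, the pointwise inequality you write,
\[
(|X^1-X^2|^2+\e^2)^{(\alpha-1)/2}(|X^1-X^3|^2+\e^2)^{-1/2}\le \tfrac12\big[(|X^1-X^2|^2+\e^2)^{\alpha/2-1}+(|X^1-X^3|^2+\e^2)^{\alpha/2-1}\big],
\]
is actually false in general: setting $a=(|X^1-X^2|^2+\e^2)^{\alpha/2-1}$ and $b=(|X^1-X^3|^2+\e^2)^{\alpha/2-1}$, the left side equals $a^{(1-\alpha)/(2-\alpha)}b^{1/(2-\alpha)}$, and since $1/(2-\alpha)>1/2$ this can exceed $(a+b)/2$ (take $b$ slightly larger than $a$). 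What \emph{is} true is the weighted form $a^{1-\theta}b^\theta\le(1-\theta)a+\theta b$ with $\theta=1/(2-\alpha)$; after taking expectations and using exchangeability ($\E[a]=\E[b]$) this still yields the bound $\frac{(N-2)\chi}{\pi N}I_\e$ on the cross term, so this part of your argument survives with the corrected weights. (The paper handles this step by applying H\"older's inequality with exponents $(2-\alpha)/(1-\alpha)$ and $2-\alpha$ directly to the expectation, which amounts to the same thing.)

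The real gap is in your final step. You claim $|X^1-X^2|^{\alpha-2}\le(|X^1-X^2|^2+\e^2)^{\alpha/2-1}$, but since $\alpha-2<0$ the map $x\mapsto x^{(\alpha-2)/2}$ is \emph{decreasing}, so adding $\e^2$ inside makes the right side smaller: the inequality goes the wrong way. Your bound on $I_\e$ therefore does not dominate $\E\big[\int_0^T|X^{1,N,\e}_s-X^{2,N,\e}_s|^{\alpha-2}\,ds\big]$. This is exactly why Proposition~\ref{cru}(ii) carries the free parameter $\eta\in(0,\e]$ rather than just $\eta=\e$: the paper runs your argument for arbitrary $\eta\in(0,\e]$ (the cross term still has $\e$ in the second factor, but since $\eta\le\e$ and the exponent is negative one may replace $\e$ by $\eta$ there at no cost), obtains the uniform bound on $\E\big[\int_0^T(|X^{1,N,\e}_s-X^{2,N,\e}_s|^2+\eta^2)^{\alpha/2-1}\,ds\big]$, and then lets $\eta\searrow 0$ by monotone convergence. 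If you keep $\eta$ free and pass to the limit at the end instead of freezing $\eta=\e$, your argument goes through.
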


\begin{proof}
We thus fix $\alpha \in (\chi(N-1)/(2\pi N),1)$.
By H\"older's inequality and exchangeability, we have, for any $\eta \in (0,\e]$,
$$
\E[(|X^{1,N,\e}_s - X^{2,N,\e}_s|^2+\eta^2)^{(\alpha-1)/2} (|X^{1,N,\e}_s - X^{3,N,\e}_s|^2+\e^2)^{-1/2}]
\leq \E[(|X^{1,N,\e}_s - X^{2,N,\e}_s|^2+\eta^2)^{\alpha/2-1}].
$$
Applying Proposition \ref{cru}-(ii), we thus find
$$
\Big( 2\alpha - \frac {(N-1)\chi}{\pi N}\Big)
\int_0^T \E[(|X^{1,N,\e}_s - X^{2,N,\e}_s|^2+\eta^2)^{\alpha/2-1}]ds
\leq \frac{(2\sqrt{2}\langle f_0,\sqrt{1+|x|^2}\rangle + 4\sqrt{2} T)^{\alpha}}\alpha.
$$
It suffices to let $\eta \searrow 0$ to complete the proof.
\end{proof}

Such an estimate easily implies tightness.

\begin{lem}\label{tight}
For each $N\geq 2$, each $\e\in(0,1)$, consider the unique solution $(X^{i,N,\e}_t)_{t\in [0,\infty),i=1,\dots,N}$
to \eqref{psee}.

\vip

(i) For $N \geq 2$ fixed, if $\chi<2\pi N/(N-1)$, 
the family $\{(X^{1,N,\e}_t)_{t\geq 0}, \; \e\in(0,1)\}$ is tight 
in $C([0,\infty),\rd)$. 

\vip

(ii) If $\chi<2\pi$, the family $\{(X^{1,N,\e}_t)_{t\geq 0}, \; N\geq 2, \; \e\in(0,1)\}$ 
is tight in $C([0,\infty),\rd)$.
\end{lem}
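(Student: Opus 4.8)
The plan is to write $X^{1,N,\e}_t=X^1_0+\sqrt2\,B^1_t+A^{N,\e}_t$ with
\[
A^{N,\e}_t:=\frac{\chi}{N}\sum_{j=2}^{N}\int_0^t K_\e(X^{1,N,\e}_s-X^{j,N,\e}_s)\,ds
\]
(the $j=1$ term vanishes since $K_\e(0)=0$), to observe that $(X^1_0+\sqrt2\,B^1_t)_{t\ge0}$ is a single random element of $C([0,\infty),\rd)$, hence trivially tight, and thus to reduce the statement to tightness of $\{A^{N,\e}\}$ in $C([0,\infty),\rd)$. I would fix $\alpha$ as follows: in case (i), any $\alpha\in(\chi(N-1)/(2\pi N),1)$; in case (ii), any $\alpha\in(\chi/(2\pi),1)$, which lies in $(\chi(N-1)/(2\pi N),1)$ for every $N\ge2$. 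With this choice Corollary \ref{corcru} applies, and in case (ii) its right-hand side is bounded by $(2\sqrt2\langle f_0,\sqrt{1+|x|^2}\rangle+4\sqrt2\,T)^{\alpha}/(\alpha(2\alpha-\chi/\pi))$ uniformly in $N\ge2$ and $\e\in(0,1)$, since $(N-1)/N\le1$.

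Set $q:=2-\alpha\in(1,2)$ and $\gamma:=1-1/q=(1-\alpha)/(2-\alpha)\in(0,1)$. Since $|K_\e(x)|\le(2\pi|x|)^{-1}$ we get $|K_\e(x)|^q\le(2\pi)^{-q}|x|^{\alpha-2}$, so by exchangeability and Corollary \ref{corcru} the quantity $\E[\int_0^T|K_\e(X^{1,N,\e}_u-X^{j,N,\e}_u)|^q\,du]$ is bounded, uniformly in $\e$ in case (i) and in $(N,\e)$ in case (ii). H\"older's inequality in the time variable on $[s,t]$, together with the concavity of $y\mapsto y^{1/q}$, then gives, for $0\le s\le t\le T$,
\[
|A^{N,\e}_t-A^{N,\e}_s|\le \Lambda^{N,\e}_T\,(t-s)^{\gamma},\qquad
\Lambda^{N,\e}_T:=\frac{\chi(N-1)^{1-1/q}}{N}\Big(\sum_{j=2}^N\int_0^T|K_\e(X^{1,N,\e}_u-X^{j,N,\e}_u)|^q\,du\Big)^{1/q},
\]
and a direct computation yields $\E[(\Lambda^{N,\e}_T)^q]=\chi^q\,\tfrac{(N-1)^q}{N^q}\,\E[\int_0^T|K_\e(X^{1,N,\e}_u-X^{2,N,\e}_u)|^q\,du]$, which is bounded uniformly (over $\e$ in case (i), over $(N,\e)$ in case (ii)) because $(N-1)^q/N^q\le1$. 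As $q>1$, the family $\{\Lambda^{N,\e}_T\}$ is therefore tight, so for $\delta>0$ there is $M=M(\delta,T)$ with $\Pro(\Lambda^{N,\e}_T>M)<\delta$; on $\{\Lambda^{N,\e}_T\le M\}$ the path $A^{N,\e}|_{[0,T]}$ lies in $\{g\in C([0,T],\rd):g(0)=0,\ |g(t)-g(s)|\le M|t-s|^{\gamma}\ \forall\,0\le s\le t\le T\}$, a compact set by Arzel\`a--Ascoli. Hence $\{A^{N,\e}|_{[0,T]}\}$ is tight in $C([0,T],\rd)$ for every $T$, which by a standard argument yields tightness of $\{A^{N,\e}\}$ in $C([0,\infty),\rd)$ for the topology of local uniform convergence; adding back the fixed process $X^1_0+\sqrt2\,B^1$ gives (i) and (ii).

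The step I expect to be the main obstacle is the passage from the expectation bound of Corollary \ref{corcru} to a genuine modulus of continuity. That corollary only controls $\int_0^T|X^{1,N,\e}_s-X^{2,N,\e}_s|^{\alpha-2}\,ds$ in the mean and, since $K_\e$ is only bounded by $\e^{-1}$, one cannot bound the drift increments $\E[|A^{N,\e}_t-A^{N,\e}_s|]$ by $C(t-s)$ uniformly in $\e$; indeed the increasing processes $t\mapsto\int_0^t|X^{1,N,\e}_u-X^{2,N,\e}_u|^{\alpha-2}\,du$ need not form a tight family by themselves. What saves the argument is that $\alpha<1$ forces $\alpha-2<-1$, i.e.\ an $L^q$-bound on the drift intensity with the exponent $q=2-\alpha$ strictly larger than $1$; this surplus integrability is exactly what H\"older's inequality in time converts into a positive H\"older exponent $\gamma$, and it is also what keeps the combinatorial factor $(N-1)^q/N^q\le1$ and hence provides the $N$-uniformity needed in case (ii). The remaining points — tightness of a single process, replacing the index $j$ by $2$ via exchangeability, and deducing tightness on $C([0,\infty),\rd)$ from tightness on each $C([0,T],\rd)$ — are routine.
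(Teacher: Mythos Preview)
Your proof is correct and follows essentially the same route as the paper: decompose $X^{1,N,\e}_t=X^1_0+\sqrt2\,B^1_t+A^{N,\e}_t$, fix $\alpha\in(\chi(N-1)/(2\pi N),1)$ (resp.\ $(\chi/(2\pi),1)$), use H\"older in time with exponent $q=2-\alpha$ together with the bound $|K_\e(x)|\le(2\pi|x|)^{-1}$ and Corollary~\ref{corcru} to obtain a uniform H\"older modulus for $A^{N,\e}$, and conclude by Arzel\`a--Ascoli. The only cosmetic difference is that the paper bounds the sum $\sum_j(\int_0^T|X^{1,N,\e}-X^{j,N,\e}|^{\alpha-2})^{1/(2-\alpha)}$ via $x^{1/(2-\alpha)}\le 1+x$ to get an $L^1$ bound on the H\"older constant, whereas you combine the sum by concavity of $y\mapsto y^{1/q}$ to get an $L^q$ bound; both arguments are equivalent and yield the same tightness criterion.
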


\begin{proof} 
We first prove (ii) and thus suppose that $\chi<2\pi$.
Since $C([0,\infty),\rd)$ is endowed with the topology of the uniform
convergence on compact time intervals, it suffices to prove that for all $T>0$,
$\{(X^{1,N,\e}_t)_{t\in [0,T]},\; N\geq 2,\;\e\in(0,1) \}$
is tight in $C([0,T],\rd)$. Let thus $T>0$ be fixed and 
recall that $X^{1,N,\e}_t=X^{1}_0 + \sqrt 2 B^1_t + J^{1,N,\e}_t$, where
$$
J^{1,N,\e}_t := \frac{\chi}{N} \sum_{j=2}^N \intot K_\e(X^{1,N,\e}_s-X^{j,N,\e}_s)ds.
$$
Observing that the laws of $X^{1}_0$ and $(B^1_t)_{t\in[0,T]}$ do not depend on $N\geq 2$ nor on $\e>0$, 
it suffices to prove that the family $\{(J^{1,N,\e}_t)_{t\in [0,T]},\; N\geq 2,\; \e\in(0,1)\}$, is tight in 
$C([0,T],\rd)$. 
To do so, we fix $\alpha \in (\chi/(2\pi),1)$, and we use H\"older's inequality to write, 
for $0\leq s < t \leq T$,
\begin{align*}
|J^{1,N,\e}_t-J^{1,N,\e}_s| \leq & \frac{\chi}{2\pi N} \sum_{j=2}^N \int_s^t |X^{1,N,\e}_u-X^{j,N,\e}_u|^{-1} du \\
\leq & |t-s|^{(1-\alpha)/(2-\alpha)} \frac{\chi}{2\pi N} \sum_{j=2}^N 
\Big(\int_s^t |X^{1,N,\e}_u-X^{j,N,\e}_u|^{\alpha-2} du\Big)^{1/(2-\alpha)} \\
\leq & Z_T^{N,\e} |t-s|^{\beta},
\end{align*}
where $\beta = (1-\alpha)/(2-\alpha)>0$ and where $Z_T^{N,\e}:=(\chi/(2\pi N)) \sum_{j=2}^N [1+
\int_0^T |X^{1,N,\e}_u-X^{j,N,\e}_u|^{\alpha-2} du]$. Indeed, $x^{1/(2-\alpha)}\leq 1 + x$ because $\alpha \in (0,1)$. 
But we immediately deduce from Corollary \ref{corcru} and exchangeability that
$\sup_{\e\in(0,1),N\geq 2} \E[Z^{N,\e}_T] < \infty$, so that there is a constant $C_T$, not 
depending on $\e\in(0,1)$ nor
on $N\geq2$ such that for all $A>0$, $\Pro(Z^{N,\e}_T > A) \leq C_T/A$. Since $J^{1,N,\e}_0=0$ a.s.,
we conclude that for all $A>0$, for all $N\geq 2$, all $\e\in(0,1)$, $\Pro[(J^{1,N,\e}_t)_{t\in [0,T]} \notin 
\cK_{A} ]
\leq C_T/A$, where $\cK_{A}$ is the set of all functions $\gamma:[0,T]\mapsto \rd$ such that
$\gamma(0)=0$ and for all $0\leq s < t \leq T$, $|\gamma(t)-\gamma(s)|\leq A |t-s|^\beta$.
The Ascoli theorem ensures us that $\cK_{A}$ is a compact subset of $C([0,T],\rd)$ for all $A>0$.
Since $\lim_{A\to\infty} \sup_{N\geq2,\e\in(0,1)} 
\Pro[(J^{1,N,\e}_t)_{t\in [0,T]} \notin \cK_{A} ] = 0$, the proof of (ii) is complete.

\vip

The proof of (i) is exactly the same: the only difference is that $N$ is fixed so that
we can choose $\alpha\in (\chi(N-1)/(2\pi N),1)$.
\end{proof}

We now prove the existence of the particle system without cutoff in the very subcritical case.

\begin{proof}[Proof of Theorem \ref{pse}]
We divide the proof in two steps. Recall that $\chi<2\pi N/(N-1)$.

\vip

{\it Step 1.} For each $\e\in(0,1)$, we consider the unique solution $(X^{i,N,\e}_t)_{t\in [0,\infty),i=1,\dots,N}$
to \eqref{psee}. By Lemma \ref{tight}-(i), we know that the family
$\{(X^{1,N,\e}_t)_{t\geq 0}, \; \e\in(0,1)\}$ is tight in $C([0,\infty),\rd)$. By exchangeability, 
we of course deduce that
$\{(X^{1,N,\e}_t,\dots,X^{N,N,\e}_t)_{t\geq 0}, \; \e\in(0,1)\}$ is tight in $C([0,\infty),(\rd)^N)$ and
consequently that $\{((X^{1,N,\e}_t,B^{1}_t),\dots,(X^{N,N,\e}_t,B^N_t))_{t\geq 0}, \; \e\in(0,1)\}$ is tight in 
$C([0,\infty),(\rd\times\rd)^{N})$
(this last assertion only uses that the law of $(B^1_t,\dots,B^N_t)_{t\geq 0}$ does not depend on $\e$).
It is thus possible to find a decreasing sequence $\e_k \searrow 0$ such that the family 
$((X^{1,N,\e_k}_t,B^{1}_t),\dots,(X^{N,N,\e_k}_t,B^N_t))_{t\geq 0}$ converges in law in
$C([0,\infty),(\rd\times\rd)^{N})$ as $k \to \infty$. By the Skorokhod representation theorem,
we can realize this convergence almost surely. All this shows that we can find, for each $k\geq 1$,
a solution $(\tX^{1,N,\e_k}_t,\dots,\tX^{N,N,\e_k}_t)_{t\geq 0}$ to \eqref{pse}, associated to some Brownian
motions $(\tB^{1,N,\e_k}_t,\dots,\tB^{N,N,\e_k}_t)_{t\geq 0}$, in such a way that the sequence
$((\tX^{1,N,\e_k}_t,\tB^{1,N,\e_k}_t),\dots,(\tX^{N,N,\e_k}_t,\tB^{N,N,\e_k}_t))_{t\geq 0}$ a.s. goes to some
$((X^{1,N}_t,B^{1}_t),\dots,(X^{N,N}_t,B^{N}_t))_{t\geq 0}$ in $C([0,\infty),(\rd\times\rd)^{N})$ as $k \to \infty$.
Let us observe at once that the family $\{(X^{i,N}_t)_{t\geq 0},\; i=1,\dots,N\}$ is exchangeable
and that, by Corollary \ref{corcru} and the Fatou Lemma, for all $T>0$ and 
$\alpha \in (\chi(N-1)/(2\pi N),1)$,
\begin{align}\label{etac}
&\max\Big\{\E\Big[\int_0^T |X^{1,N}_s- X^{2,N}_s|^{\alpha-2} ds \Big],
\sup_k \E\Big[\int_0^T |\tX^{1,N,\e_k}_s- \tX^{2,N,\e_k}_s|^{\alpha-2} ds \Big]\Big\}  \\
\leq &\frac{(2\sqrt{2}\langle f_0,\sqrt{1+|x|^2}\rangle + 
4\sqrt{2} T)^{\alpha}}{\alpha(2\alpha-(N-1)\chi/(\pi N))}.\nonumber
\end{align}

{\it Step 2.} We introduce $\cF_t=\sigma((X_s^{i,N},B^{i}_s)_{i=1,\dots,N,s\in[0,t]})$.
Of course, $(X^{i,N}_t)_{i=1,\dots,N,t\geq 0}$ is $(\cF_t)_{t\geq 0}$-adapted. 
The family $(X^{i,N}_0)_{i=1,\dots,N}$ is of course i.i.d. and $f_0$-distributed 
(because this is the case of $(X^{i,N,\e_k}_0)_{i=1,\dots,N}$ for all $k\geq 1$) and 
$(B^{i}_s)_{i=1,\dots,N,s\in[0,t]}$ is obviously a $2N$-dimensional Brownian motion
(because this is the case of $(B^{i,N,\e_k}_s)_{i=1,\dots,N,s\geq 0}$ for all $k\geq 1$).
We now show that
$(B^{i}_s)_{i=1,\dots,N,s\in[0,t]}$ is a $2N$-dimensional $(\cF_t)_{t\geq 0}$-Brownian motion.
Let thus $t>0$, $\phi:C([0,\infty),(\rd)^N)$  and $\psi:C([0,t],(\rd\times\rd)^N)$ be continuous and bounded.
We have to check that 
\begin{align*}
&\E[\psi((X_s^{i,N},B^{i}_s)_{i=1,\dots,N,s\in[0,t]})\phi((B^{i}_{t+s}-B^{i}_t)_{i=1,\dots,N,s\geq 0}))]\\
&\hskip3cm=\E[\psi((X_s^{i,N},B^{i,N}_s)_{i=1,\dots,N,s\in[0,t]})]E[\phi((B^{i}_{t+s}-B^{i}_t)_{i=1,\dots,N,s\geq 0}))].
\end{align*}
This immediately follows from the fact that for all $k\geq 1$, 
\begin{align*}
&\E[\psi((\tX_s^{i,N,\e_k},\tB^{i,N,\e_k}_s)_{i=1,\dots,N,s\in[0,t]})\phi((\tB^{i,N,\e_k}_{t+s}
-\tB^{i,N,\e_k}_t)_{i=1,\dots,N,s\geq 0}))]\\
&\hskip3cm=\E[\psi((\tX_s^{i,N,\e_k},\tB^{i,N,\e_k}_s)_{i=1,\dots,N,s\in[0,t]})]
E[\phi((\tB^{i,N,\e_k}_{t+s}-\tB^{i,N,\e_k}_t)_{i=1,\dots,N,s\geq 0}))],
\end{align*}
which holds true because $(\tX_t^{i,N,\e_k})_{i=1,\dots,N,t\geq 0}$ is a strong solution to \eqref{psee}
and is thus adapted to the filtration $\cF_t^k=\sigma((X_0^{i},B^{i,N,\e_k}_s)_{i=1,\dots,N,s\geq 0})$.

\vip

{\it Step 3.}
It only remains to check that for each $i\in \{1,\dots,N\}$, each $t\geq 0$,
$X^{i,N}_t=X^{i,N}_0 + \sqrt 2 B^{i}_t + (\chi/N) \sum_{j= 1}^N \intot K(X^{i,N}_s-X^{j,N}_s) ds.$
We of course start from the identity
$\tX^{i,N,\e_k}_t=\tX^{i,N,\e_k}_0 + \sqrt 2 \tB^{i,N,\e_k}_t 
+ (\chi /N) \sum_{j=1}^N \intot K_{\e_k}(\tX^{i,N,\e_k}_s-\tX^{j,N,\e_k}_s) ds$
and pass to the limit as $k\to \infty$, e.g. in probability. 
The only difficulty is to prove that $J^{ij}_k(t)$ tends to $J^{ij}(t)$, where
$$
J^{ij}_k(t)= \intot K_{\e_k}(\tX^{i,N,\e_k}_s-\tX^{j,N,\e_k}_s) ds \quad \hbox{and}\quad
J^{ij}(t)= \intot K(X^{i,N}_s-X^{j,N}_s) ds.
$$
We introduce, for $\eta\in(0,1)$,
$$
J^{ij}_{k,\eta}(t)= \intot K_{\eta}(\tX^{i,N,\e_k}_s-\tX^{j,N,\e_k}_s) ds \quad \hbox{and}\quad
J^{ij}_{\eta}(t)= \intot K_\eta (X^{i,N}_s-X^{j,N}_s) ds.
$$
For $\alpha \in (0,1)$ and $k$ sufficiently large so that
$\e_k<\eta$, we have
\begin{equation}\label{ar}
|K_\eta(x)-K_{\e_k}(x) | + |K_\eta(x)-K(x)| \leq \frac {\eta^2}{\pi |x|(|x|^2+\eta^2)} \leq
\frac{\eta^{1-\alpha} |x|^{\alpha-2}}{\pi}.
\end{equation}
We thus deduce from \eqref{etac} that, for $\alpha \in (\chi (N-1)/(2\pi N),1)$, there exists $C_{\alpha,t}<+\infty$ such that
$$
\E[|J^{ij}(t)-J^{ij}_\eta(t)|] + \limsup_k \E[|J^{ij}_k(t)-J^{ij}_{k,\eta}(t)|] \leq C_{\alpha,t} \eta^{1-\alpha}.
$$
Next, since $K_\eta$ is continuous and bounded and since $(\tX^{i,N,\e_k}_s)_{s\ge 0}$ goes a.s. to 
$(X^{i,N}_s)_{s\ge 0}$, it holds that $J^{ij}_{k,\eta}(t) \to J^{ij}_\eta(t)$ a.s. and in $L^1$ for each $\eta>0$.
Writing
$$
\E[|J^{ij}(t)-J^{ij}_k(t)|] \leq \E[|J^{ij}(t)-J^{ij}_\eta(t)|]  +
\E[|J^{ij}_\eta(t)-J^{ij}_{k,\eta}(t)|]+ \E[|J^{ij}_{k,\eta}(t)-J^{ij}_k(t)|],
$$
we conclude that
$
\limsup_{k\to\infty} \E[|J^{ij}(t)-J^{ij}_k(t)|] \leq C_{\alpha,t} \eta^{1-\alpha}.
$
Since $\eta \in (0,1)$ can be chosen arbitrarily small, 
we deduce that indeed, $J^{ij}_k(t)$ tends to $J^{ij}(t)$ in $L^1$ as $k\to \infty$.
\end{proof}

Following some ideas of \cite[Proposition 6.1]{fhm}, we now give the

\begin{proof}[Proof of Theorem \ref{tico}]
For each $N\geq 2$, we consider the particle system $(X^{i,N}_t)_{t\in [0,\infty),i=1,\dots,N}$ 
built in Theorem \ref{pse},
and we set $\mu^N=N^{-1}\sum_1^N \delta_{(X^{i,N}_t)_{t\in [0,\infty)}}$,
which a.s. belongs to $\cP(C([0,\infty),\rd))$. For each $t\geq 0$, we also set
$\mu^N_t=N^{-1}\sum_1^N \delta_{X^{i,N}_t}$, which a.s. belongs to $\cP(\rd)$.

\vip

{\it Step 1.}
For each $N\geq 2$, $(X^{i,N}_t)_{t\in [0,\infty),i=1,\dots,N}$ has been obtained as a limit point (in law),
of $(X^{i,N,\e}_t)_{t\in [0,\infty),i=1,\dots,N}$ as $\e\to 0$. By 
Lemma \ref{tight}-(ii), the family $\{(X^{1,N}_t)_{t\geq 0}, \; N\geq 2\}$ is thus tight in $C([0,\infty),\rd)$.
As is well-known, see Sznitman \cite[Proposition 2.2]{s}, this implies that
the family $\{\mu^{N}, \; N\geq 2\}$ is tight in $\cP(C([0,\infty),\rd))$ 
(because for each $N\geq2$, the system is exchangeable). This proves point (i).

\vip

{\it Step 2.} We now consider a (not relabelled for notational simplicity) subsequence of $\mu^N$ 
going in law to some $\mu$
and show that $\mu$ a.s. belongs to $\cS:=\{\cL((X_t)_{t\geq 0})\; :\; (X_t)_{t\geq 0}$ solution
to the nonlinear SDE \eqref{nsde} with initial law $f_0\}$, recall Definition \ref{dfnsde}.
This will prove point (ii).

\vip

{\it Step 2.1.} Consider the identity map $\gamma=(\gamma_t)_{t\geq 0}: C([0,\infty),\rr^2) 
\mapsto C([0,\infty),\rr^2)$.
Using the classical theory of martingale problems, we realize that $Q \in \cP(C([0,\infty),\rd)))$ 
belongs to $\cS$ as soon as, setting $Q_t= Q\circ \gamma_t^{-1} \in \cP(\rd)$ for each $t\geq 0$,

\vip

(a) $Q_0=f_0$;

\vip

(b) $\int_0^T \intrd\intrd |x-y|^{-1}Q_s(dy)Q_s(dx)ds <\infty$ for all $T>0$;

\vip

(c) for all $0<t_1<\dots <t_k<s<t$, all
$\varphi_1,\dots,\varphi_k \in C_b(\rr^2)$, all $\varphi \in C^2_b(\rr^2)$,
\begin{align*}
\cF(Q):=&\int\!\!\int Q(d z)Q(d\tz)\varphi_1(z_{t_1})\dots\varphi_k(z_{t_k})\\
& \hskip0.8cm 
\Big[
\varphi(z_t)-\varphi(z_s)-\chi\int_s^t K(z_u-\tz_u)\cdot \nabla \varphi(z_u)du - 
\int_s^t \Delta\varphi(z_u)du\Big]=0. 
\end{align*} 
Indeed, let $(X_t)_{t\geq 0}$ be $Q$-distributed, so that $\cL(X_t)=Q_t$ for all $t\geq 0$. 
Then (a) says that $X_0$ is
$f_0$-distributed, (b) is nothing but the requirement (a) of Definition \ref{dfnsde}, and (c)
tells us that for all $\varphi \in C^2_b(\rr^2)$,
$$
\varphi(X_t)-\varphi(X_s)-\chi \intot \int K(X_s-\tz_s) \cdot \nabla\varphi(X_s)Q(d\tz)ds 
- \intot \Delta\varphi(X_s)ds
$$
is a martingale in the filtration $(\cF_t)_{t\geq 0}$ generated by $(X_t)_{t\geq 0}$.
This classically implies the existence of
a $2$-dimensional $(\cF_t)_{t\geq 0}$-Brownian motion $(B_t)_{t\geq 0}$
such that
$X_t=X_0+\sqrt 2 B_t+ \chi \intot \int K(X_s-\tz_s) Q_s(d\tz)ds$ for all $t\geq 0$.
It finally suffices to observe that for all $x\in \rd$ and all $s\geq0$, $\int K(x-\tz_s) Q(d\tz)=(K\star Q_s)(x)$.

\vip

We now prove that $\mu$ a.s. satisfies these three points. For each
$t\geq 0$, we set $\mu_t= \mu \circ \gamma_t^{-1}$.

\vip

{\it Step 2.2.} 
Since $\mu^N_0$ is the empirical measure of $N$ i.i.d. $f_0$-distributed random variables
and since $\mu_0$ is the limit (in law) of $\mu^N_0$, we obviously have that $\mu_0=f_0$ a.s.,
i.e. $\mu$ a.s. satisfies (a).

\vip

{\it Step 2.3.} Using Corollary \ref{corcru} and exchangeability, we see
that for any $\alpha \in (\chi/(2\pi),1)$, any $T>0$, there is a finite constant $C_{\alpha,T}$ 
such that for all $m>0$, all $N\ge 2$,
\begin{align*}
\E\Big[\int_0^T \intrd \intrd  (m\wedge|x-y|^{\alpha-2}) \mu^N_s(dy)\mu^N_s(dx)ds \Big] 
\leq &\frac{mT}{N}+\frac{1}{N^2}\sum_{i\ne j}\E\Big[\int_0^T |X^{i,N}_s-X^{j,N}_s|^{\alpha-2} ds \Big]\\
\leq &\frac{mT}{N}+C_{\alpha,T}.
\end{align*}
Since $\mu^N$ goes in law to $\mu$, the LHS converges to $\E\Big[\int_0^T \intrd \intrd  
(m\wedge|x-y|^{\alpha-2}) \mu_s(dy)\mu_s(dx)ds \Big]$ as $N\to\infty$. 
Letting $m$ increase to infinity and using the monotone convergence theorem, we find that
$$
\E\Big[\int_0^T \intrd \intrd |x-y|^{\alpha-2}\mu_s(dy)\mu_s(dx)ds \Big]\leq C_{\alpha,T}.
$$
Since $\alpha <1$, this of course implies that $\mu$ a.s. satisfies (b).

\vip

{\it Step 2.4.} From now on, we consider some fixed $\cF: \cP(C([0,\infty),\rr^2))\mapsto \rr$
as in point (c) and we check that $\cF(\mu)=0$ a.s.

\vip

{\it Step 2.4.1.} Here we prove that for all $N\geq 2$,
\begin{equation}\label{scc1}
\E \Big[ (\cF(\mu^N))^2  \Big] \leq \frac {C_\cF} N.
\end{equation}
To this end, we recall that $\varphi \in C^2_b(\rr^2)$ is fixed 
and we apply the It\^o formula to (\ref{ps}):
\begin{align*}
O^N_i(t):=&\varphi(X^{i,N}_t)
-\frac \chi N \sum_{j= 1}^N \intot \nabla\varphi(X^{i,N}_s) \cdot
K(X^{i,N}_s-X^{j,N}_s) ds - \intot \Delta \varphi(X^{i,N}_s)ds\\
=& \varphi(X^i_0) + \sqrt 2 \intot \nabla \varphi(X^{i,N}_s)\cdot dB^i_s.
\end{align*}
By definition of $\cF$ (recall that $K(0)=0$ by convention),
\begin{align*}
\cF(\mu^N)=& \frac 1 N \sum_{i=1}^N \varphi_1(X^{i,N}_{t_1})\dots \varphi_k(X^{i,N}_{t_k}) 
[O^N_i(t)-O^N_i(s)]\\
=&\frac {\sqrt 2}N \sum_{i=1}^N \varphi_1(X^{i,N}_{t_1})\dots \varphi_k(X^{i,N}_{t_k}) 
\int_s^t \nabla \varphi(X^{i,N}_s)\cdot dB^i_s.
\end{align*}
Then (\ref{scc1}) follows from some classical stochastic calculus argument, using that $0<t_1<\dots<t_k<s<t$, that
$\varphi_1,\dots,\varphi_k,\nabla\varphi$ are bounded and that
the Brownian motions $B^1,\dots,B^N$ are independent.

\vip

{\it Step 2.4.2.} Next we introduce, for $\eta\in (0,1)$, 
$\cF_\eta$ defined as $\cF$ with $K$ replaced by the smooth and bounded kernel $K_\eta$, recall \eqref{ke}.
Then one easily checks that $Q\mapsto \cF_\eta(Q)$
is continuous and bounded from $\cP(C([0,\infty),\rr^2))$ to $\rr$. Since
$\mu^N$ goes in law to $\mu$, we deduce that for any $\eta\in(0,1)$,
$$
\E[|\cF_\eta(\mu)|]=\lim_{N} \E[|\cF_\eta(\mu^N)|].
$$

{\it Step 2.4.3.} We now prove that for all $N\geq 2$, all $\eta\in(0,1)$, all $\alpha \in (\chi/(2\pi),1)$,
$$
\E[|\cF(\mu)-\cF_\eta(\mu)|] + \sup_{N\geq 2}\E[|\cF(\mu^N)-\cF_\eta(\mu^N)|] \leq C_{\alpha,\cF} \;\eta^{1-\alpha}.
$$
Using that all the functions (including the derivatives) involved in $\cF$ are bounded and that
we have $|K_\eta(x)-K(x)|\leq \eta^{1-\alpha} |x|^{\alpha-2}\indiq_{\{x\ne 0\}}/\pi$ by \eqref{ar}, 
we get the existence of a finite constant $C_\cF$ such that
\begin{align*}
|\cF(Q)-\cF_\eta(Q)| \leq& C_\cF \; \eta^{1-\alpha} \int \int  \int_0^t |z_u-\tz_u|^{\alpha-2}
\indiq_{\{z_u \ne \tz_u\}}
 du \; Q(d\tz) Q(d z)\\
= & C_\cF \; \eta^{1-\alpha} \intot \intrd \intrd |x-y|^{\alpha-2}\indiq_{\{x\ne y\}} Q_u(dy)Q_u(dx) du.
\end{align*}
The conclusion then follows from Step 2.3. combined with the estimate
\begin{align*}
\E\Big[\int_0^T \intrd \intrd  |x-y|^{\alpha-2}\indiq_{\{x\ne y\}} \mu^N_s(dy)\mu^N_s(dx)ds \Big] 
\leq \frac{1}{N^2}\sum_{i\ne j}\E\Big[\int_0^T |X^{i,N}_s-X^{j,N}_s|^{\alpha-2} ds \Big]\leq C_{\alpha,T}
\end{align*}
deduced from Corollary \ref{corcru} and exchangeability.
\vip

{\it Step 2.4.4.} For any $\eta \in (0,1)$, we write
\begin{align*}
\E[|\cF(\mu)|] \leq & \E[|\cF(\mu)-\cF_\eta(\mu)|]+ \limsup_N| \E[|\cF_\eta(\mu)|]- \E[|\cF_\eta(\mu^N)|]|\\
&+ \limsup_N \E[|\cF_\eta (\mu^N)-\cF(\mu^N)|] + \limsup_N \E[|\cF(\mu^N)|].
\end{align*}
By Steps 2.4.1 and 2.4.2, the fourth and second terms on the right-hand side are zero. We thus deduce from Step 2.4.3
that $\E[|\cF(\mu)|] \leq  C_{\alpha,\cF} \;\eta^{1-\alpha}$. Since $\eta\in(0,1)$ can be chosen
arbitrarily small, we conclude
that $\E[|\cF(\mu)|]=0$, whence $\cF(\mu)=0$ a.s. as desired.

\vip

{\it Step 3.} It only remains to check point (iii).
Consider the (not relabelled) subsequence $\mu^{N}$ going to $\mu$ in $\cP(C([0,\infty),\rd))$ 
as in Step 2. This implies that $(\mu^N_t)_{t\geq 0}$ goes to  $(\mu_t)_{t\geq 0}$ in $C([0,\infty),\cP(\rd))$.
By Step 2, $\mu$ is a.s. the law of a solution to the nonlinear SDE \eqref{nsde}.
As seen in Remark \ref{tlm}, this implies that a.s., $(\mu_t)_{t\geq 0}$ is a weak solution to the Keller-Segel
equation \eqref{ks}.
\end{proof}

\section{(Local) existence for the particle system in the general case}\label{local}

The aim of this section is to prove Theorem \ref{pse2}.
We thus fix $\chi>0$ and $f_0 \in \cP_1(\rd)$. 
We introduce the domain, for $\ell\geq 1$ and $N\geq 2$,
$$
D_\ell^N := \{(x_1,\dots,x_N)\in (\rd)^N \; : \; |x_i-x_j|+|x_j-x_k|+|x_k-x_i|>1/\ell
\; \hbox{for all}\; i,j,k \; \hbox{pairwise different}\},
$$
and we consider the Lipschitz continuous function $\Phi_\ell^N: (\rd)^N \mapsto [0,1]$ defined
by
$$
\Phi_\ell^N(x_1,\dots,x_N)=0\vee \Big(2\ell\min_{i,j,k\;\rm{ distinct}}\{|x_i-x_j|+|x_j-x_k|+|x_k-x_i|\}
-1\Big)\wedge 1,
$$
which satisfies $\indiq_{D_\ell^N}\leq\Phi_\ell^N \leq \indiq_{D_{2\ell}^N}$.
As usual, the random variables 
$X^{i}_0$, $i=1,\dots, N$ are i.i.d. with common law $f_0$ and independent of the i.i.d.
$2$-dimensional Brownian motions $(B^{i}_t)_{t\geq 0}$,  $i=1,\dots, N$.
For $\e\in(0,1)$ and $\ell\geq 1$, the particle system
\begin{equation}\label{pseea}
X^{i,N,\e,\ell}_t=X^{i}_0 + \sqrt 2 B^i_t + \frac \chi N \sum_{j=1}^N \intot K_\e(X^{i,N,\e,\ell}_s-X^{j,N,\e,\ell}_s) 
\Phi^N_\ell((X^{k,N,\e,\ell}_s)_{k=1,\dots,N}) ds
\end{equation}
is strongly well-posed, since $K_\e$ and $\Phi^N_\ell$ are bounded and Lipschitz continuous.
For a fixed $\ell\geq 1$, we can show as in Corollary \ref{corcru} that particles do not meet too often.

\begin{lem}\label{ptiti} Fix $\chi>0$ and consider, for each $N\geq 2$, $\e \in(0,1)$ and $\ell \geq1$,
the unique solution $(X^{i,N,\e,\ell}_t)_{t\geq 0, i=1,\dots,N}$ to \eqref{pseea}.

\vip

(i) For all $t\geq 0$, all $\ell>0$, all $\e\in(0,1)$,
 $\E[(1+|X^{1,N,\e,\ell}_{t}|^2)^{1/2}] \leq \langle f_0,\sqrt{1+|x|^2}\rangle + 2 t$.

\vip

(ii) For all $T>0$, all $\alpha \in (0,1)$, all $\ell>0$, there is a constant $C_{T,\alpha,\ell}$
(depending also on $\chi$ and $f_0$) such that for all $\e\in(0,1)$, all $N > \chi/(2\alpha \pi)$,
\begin{align*}
\E\Big[ \int_0^{T} |X^{1,N,\e,\ell}_s - X^{2,N,\e,\ell}_s|^{\alpha-2} ds\Big]
\leq  1+ C_{T,\alpha,\ell} \Big(2\alpha -\frac \chi {\pi N}\Big)^{(\alpha-2)/(1-\alpha)}.
\end{align*}
\end{lem}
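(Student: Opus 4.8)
The plan is to mimic the proof of Proposition \ref{cru} and Corollary \ref{corcru}, the only new feature being the cutoff factor $\Phi^N_\ell$, which is bounded by $1$ and supported on $D^N_{2\ell}$. Point (i) is essentially unchanged: applying the It\^o formula to $t\mapsto (1+|X^{1,N,\e,\ell}_t|^2)^{1/2}$ and taking expectations, the interaction term becomes $(\chi/N)\sum_{j\neq 1}\E[\int_0^t (1+|X^{1,N,\e,\ell}_s|^2)^{-1/2}X^{1,N,\e,\ell}_s\cdot K_\e(X^{1,N,\e,\ell}_s-X^{j,N,\e,\ell}_s)\Phi^N_\ell(\cdots)\,ds]$; by exchangeability we symmetrize in $1\leftrightarrow j$, the factor $\Phi^N_\ell$ being symmetric under this exchange, and the same pointwise inequality $(x(1+|x|^2)^{-1/2}-y(1+|y|^2)^{-1/2})\cdot(x-y)\geq 0$ as in the proof of Proposition \ref{cru}, together with $\Phi^N_\ell\geq 0$ and the oddness of $K_\e$, makes this term non-positive. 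Hence $\E[(1+|X^{1,N,\e,\ell}_t|^2)^{1/2}]\leq\langle f_0,\sqrt{1+|x|^2}\rangle + 2t$, using $\Delta\phi\leq 2$ as before.

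For point (ii) I would repeat the It\^o computation \eqref{keycalc}: writing $D_t=X^{1,N,\e,\ell}_t-X^{2,N,\e,\ell}_t$ and applying the It\^o formula to $\phi_\eta(D_t)=(|D_t|^2+\eta^2)^{\alpha/2}$ with $\eta\in(0,\e]$ (then letting $\eta\searrow 0$), one gets exactly the same three contributions on the right-hand side as in \eqref{ttt}, each now multiplied by the cutoff $\Phi^N_\ell$ evaluated along the trajectory: the diffusion term $2\alpha^2\E\int_0^T(|D_s|^2+\eta^2)^{\alpha/2-1}\,ds$ on the left, the self-interaction term $(\alpha\chi/(\pi N))\E\int_0^T(|D_s|^2+\eta^2)^{\alpha/2-1}\Phi^N_\ell(\cdots)\,ds\leq(\alpha\chi/(\pi N))\E\int_0^T(|D_s|^2+\eta^2)^{\alpha/2-1}\,ds$ on the right (dropping $\Phi^N_\ell\leq 1$), and the cross term. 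The key difference is the cross term: on the support of $\Phi^N_\ell$ we have $|X^{1,N,\e,\ell}_s-X^{i,N,\e,\ell}_s|+|X^{2,N,\e,\ell}_s-X^{i,N,\e,\ell}_s|>1/(2\ell)$ whenever $s$ is such that $\Phi^N_\ell(\cdots)>0$ and $1,2,i$ are distinct, so at least one of the two distances exceeds $1/(4\ell)$; hence the summand
\[
(|D_s|^2+\eta^2)^{(\alpha-1)/2}\Big|\frac{X^{i}_s-X^{1}_s}{|X^{i}_s-X^{1}_s|^2}+\frac{X^{2}_s-X^{i}_s}{|X^{2}_s-X^{i}_s|^2}\Big|\Phi^N_\ell(\cdots)
\]
is bounded — crucially the bound on the bracket is $8\ell$, uniform in $\e$, because the cutoff forbids triple near-collisions and there is no $\e^2$ regularization needed here. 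Thus the cross term is at most $C\ell\,\E\int_0^T(|D_s|^2+\eta^2)^{(\alpha-1)/2}\,ds\leq C\ell\,\E\int_0^T(1+(|D_s|^2+\eta^2)^{\alpha/2-1})\,ds$, where I used the elementary bound $r^{(\alpha-1)/2}\leq 1+r^{\alpha/2-1}$ valid for all $r>0$ when $\alpha<1$ (split according to $r\gtrless 1$).

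Assembling these, and bounding the left-hand-side quantity $\E[\phi_\eta(D_T)]\leq(2\sqrt 2\langle f_0,\sqrt{1+|x|^2}\rangle+4\sqrt 2 T)^\alpha$ via (i) exactly as in Proposition \ref{cru}, we arrive, after letting $\eta\searrow 0$, at an inequality of the form
\[
\Big(2\alpha-\frac{\chi}{\pi N}\Big)\E\Big[\int_0^T|D_s|^{\alpha-2}\,ds\Big]\leq C_{T,\alpha}+C_{\alpha,\ell}\Big(T+\E\Big[\int_0^T|D_s|^{\alpha-2}\,ds\Big]\Big),
\]
where $C_{\alpha,\ell}=C\alpha\chi\ell/(\pi N)\leq C\alpha\chi\ell/(2\pi)$ is bounded independently of $N\geq 2$. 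The delicate point — and the only real obstacle — is that this cross-term contribution is \emph{not} small compared with the coefficient $2\alpha-\chi/(\pi N)$ of the left-hand side, so one cannot simply absorb it as in Corollary \ref{corcru}; instead one must keep track of how $C_{\alpha,\ell}$ and $(2\alpha-\chi/(\pi N))$ interact. Rearranging and then splitting off a harmless additive constant (absorbing $C_{\alpha,\ell}T/(2\alpha-\chi/(\pi N)-C_{\alpha,\ell})$-type quantities into the ``$1+$'' of the statement when the denominator is bounded below, and using Young's inequality $ab\leq a^{1/(1-\alpha)}/\text{const}+\text{const}\cdot b^{1/\alpha}$ or a direct division to expose the exponent $(\alpha-2)/(1-\alpha)$), one obtains the claimed bound $\E[\int_0^T|D_s|^{\alpha-2}\,ds]\leq 1+C_{T,\alpha,\ell}(2\alpha-\chi/(\pi N))^{(\alpha-2)/(1-\alpha)}$ for $N>\chi/(2\alpha\pi)$ (which guarantees $2\alpha-\chi/(\pi N)>0$). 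I expect the bookkeeping of constants in this last rearrangement — in particular producing precisely the power $(\alpha-2)/(1-\alpha)$ — to be the fiddly part, but it is routine once the uniform-in-$\e$ bound $8\ell$ on the cross term is in hand.
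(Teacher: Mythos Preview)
Your treatment of (i) is correct and matches the paper.

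For (ii) there is a real gap in the cross-term estimate. The support condition for $\Phi^N_\ell$ is
\[
|X^1-X^2|+|X^1-X^i|+|X^2-X^i|>\tfrac{1}{2\ell},
\]
which does \emph{not} give $|X^1-X^i|+|X^2-X^i|>1/(2\ell)$, and certainly does not bound the bracket $|X^1-X^i|^{-1}+|X^2-X^i|^{-1}$ by $8\ell$: take $X^i$ arbitrarily close to $X^1$ while $|X^1-X^2|$ is of order one. The cutoff forbids \emph{triple} near-collisions, not pairwise ones, so one of the two reciprocal distances may blow up on the support of $\Phi^N_\ell$, uniformly in $\e$. Your subsequent inequality therefore has both sides linear in $I:=\E\int_0^T(|D_s|^2+\eta^2)^{\alpha/2-1}ds$, and once $2\alpha-\chi/(\pi N)$ is smaller than your constant $C_{\alpha,\ell}$ (which happens for $N$ just above $\chi/(2\alpha\pi)$) it says nothing; no Young-type trick can produce the exponent $(\alpha-2)/(1-\alpha)$ from a linear relation.

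The paper's fix is as follows. After exchangeability the cross integrand is $u^{(\alpha-1)/2}v^{-1/2}\Phi^N_\ell$ with $u=|X^1-X^2|^2+\eta^2$ and $v=|X^1-X^3|^2+\e^2$. One uses the elementary bound
\[
u^{(\alpha-1)/2}v^{-1/2}\le\bigl(1+\max\{u,v\}^{-1/2}\bigr)\bigl(1+u^{-1/2}+v^{-1/2}\bigr)
\]
together with the observation that on the support of $\Phi^N_\ell$ one has $\max\{|X^1-X^2|,|X^1-X^3|\}\ge 1/(8\ell)$, so the first factor is at most $1+8\ell$. Integrating, using exchangeability on the $v^{-1/2}$ term, and then H\"older with exponents $2-\alpha$ and $(2-\alpha)/(1-\alpha)$ gives the cross term bounded by $B_{T,\ell}+C_{T,\ell}\,I^{1/(2-\alpha)}$. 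The resulting inequality
\[
\Big(2\alpha-\frac{\chi}{\pi N}\Big)I\le B+C\,I^{1/(2-\alpha)}
\]
has a \emph{sublinear} power of $I$ on the right; splitting $I\le 1$ versus $I>1$ yields directly
\[
I\le 1+(B+C)^{(2-\alpha)/(1-\alpha)}\Big(2\alpha-\frac{\chi}{\pi N}\Big)^{(\alpha-2)/(1-\alpha)},
\]
which is where the exponent comes from.
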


\begin{proof}
First, (i) can be checked exactly as Proposition \ref{cru}-(i), using only that $\Phi^N_\ell$ is nonnegative
and does break the exchangeability.
We now prove (ii) and thus fix $\alpha \in (0,1)$.
Proceeding exactly as in the proof of
Proposition \ref{cru}-(ii), see \eqref{ttt}, we find that for all $\eta \in (0,\e]$,
\begin{align*}
2 \alpha^2 I_{\eta,\alpha,T}^{N,\e,\ell} \leq A_{\alpha,T} + \frac{\chi \alpha }{\pi N} J_{\eta,\alpha,T}^{N,\e,\ell}
+  \frac{(N-2)\chi \alpha}{\pi N} K_{\eta,\alpha,T}^{N,\e,\ell},
\end{align*}
where $A_{\alpha,T}=(2\sqrt{2}\langle f_0,\sqrt{1+|x|^2}\rangle + 4\sqrt{2}T)^{\alpha}$ and where
\begin{align*}
I_{\eta,\alpha,T}^{N,\e,\ell}=&\E[ \int_0^{T} (|X^{1,N,\e,\ell}_s - X^{2,N,\e,\ell}_s|^2+\eta^2)^{\alpha/2-1} ds],\\
J_{\eta,\alpha,T}^{N,\e,\ell}=&\E\Big[ \int_0^{T} \!(|X^{1,N,\e,\ell}_s - X^{2,N,\e,\ell}_s|^2+\eta^2)^{\alpha/2-1}
\Phi^N_\ell((X^{k,N,\e,\ell}_s)_{k=1,\dots,N})ds\Big], \\
K_{\eta,\alpha,T}^{N,\e,\ell}=& \E\Big[ \int_0^{T} (|X^{1,N,\e,\ell}_s - X^{2,N,\e,\ell}_s|^2+\eta^2)^{(\alpha-1)/2}
(|X^{1,N,\e,\ell}_s - X^{3,N,\e,\ell}_s|^2+\e^2)^{-1/2}\\
&\hskip8cm \Phi^N_\ell((X^{k,N,\e,\ell}_s)_{k=1,\dots,N}) ds\Big].
\end{align*}
Since $\Phi^N_\ell \leq 1$, we obviously have $J_{\eta,\alpha,T}^{N,\e,\ell}\leq I_{\eta,\alpha,T}^{N,\e,\ell}$.
We next note that for $u,v>0$, $u^{(\alpha-1)/2}v^{-1/2}\leq(1+u^{-1/2})(1+v^{-1/2})\leq
(1+\max\{u,v\}^{-1/2})(1+u^{-1/2}+v^{-1/2})$ and that,
for $x=(x_1,\dots,x_N) \in (\rd)^N$, $\Phi^N_\ell(x)>0$ implies that $x \in D^N_{2\ell}$, whence
$|x_1-x_2|+|x_1-x_3|+|x_2-x_3|\geq 1/(2\ell)$ and thus $\max\{|x_1-x_2|,|x_1-x_3|\}\geq 1/(8\ell)$.
Consequently, since $\eta\in(0,\e]$,
\begin{align*}
&(|x_1-x_2|^2+\eta^2)^{(\alpha-1)/2}(|x_1-x_3|^2+\e^2)^{-1/2}\Phi^N_\ell(x) \\
\leq & [1+\max\{\eta^2+|x_1-x_2|^2,\eta^2+|x_1-x_3|^2\}^{-1/2}]\\
&\hskip3cm \times [1+ (\eta^2+|x_1-x_2|^2)^{-1}+(\eta^2+|x_1-x_3|^2)^{-1}]
\indiq_{\{x\in D^N_{2\ell}\}}\\
\leq & (1+8\ell) [1+(|x_1-x_2|^2+\eta^2)^{-1/2} + (|x_1-x_3|^2+\eta^2)^{-1/2}].
\end{align*}
This implies that
\begin{align*}
K_{\eta,\alpha,T}^{N,\e,\ell}\leq& (1+8\ell)\E\Big[\int_0^T \Big(1+ (|X^{1,N,\e,\ell}_s - X^{2,N,\e,\ell}_s|^2+\eta^2)^{-1/2}+ \\
&\hskip5cm (|X^{1,N,\e,\ell}_s - X^{3,N,\e,\ell}_s|^2+\eta^2)^{-1/2}\Big)  ds  \Big]\\
\leq & (1+8\ell)T + 2(1+8\ell)\E\Big[\int_0^T (|X^{1,N,\e,\ell}_s - X^{2,N,\e,\ell}_s|^2+\eta^2)^{-1/2} ds \Big]\\
\leq & (1+8\ell)T +2(1+8\ell)T^{(1-\alpha)/(2-\alpha)} [I_{\eta,\alpha,T}^{N,\e,\ell} ]^{1/(2-\alpha)}
\end{align*}
by the H\"older inequality. All in all, we have checked that
$$
\Big( 2\alpha -\frac {\chi}{\pi N}\Big) I_{\eta,\alpha,T}^{N,\e,\ell} \leq B_{\alpha,T,\ell}+
C_{\alpha,T,\ell} [I_{\eta,\alpha,T}^{N,\e,\ell} ]^{1/(2-\alpha)},
$$
where $B_{\alpha,T,\ell}=A_{\alpha,T}/\alpha+ (1+8\ell)T\chi /\pi$ and 
$C_{\alpha,T,\ell}=2(1+8\ell)T^{(1-\alpha)/(2-\alpha)} \chi /\pi$. 

Separating the cases
$I_{\eta,\alpha,T}^{N,\e,\ell}\leq 1$ and $I_{\eta,\alpha,T}^{N,\e,\ell}> 1$, we easily conclude that
$$
I_{\eta,\alpha,T}^{N,\e,\ell}\leq 1 + \Big(B_{\alpha,T,\ell}+C_{\alpha,T,\ell}\Big)^{(2-\alpha)/(1-\alpha)}
\Big(2\alpha -\frac {\chi} {\pi N} \Big)^{(\alpha-2)/(1-\alpha)}.
$$
It finally suffices to let $\eta \searrow 0$ to conclude the proof.
\end{proof}

We now deduce some compactness, still for $\ell$ fixed.

\begin{lem}\label{titi} 
Fix $\chi>0$ and consider, for each $N\geq 2$, $\e \in(0,1)$ and $\ell \geq1$,
the unique solution $(X^{i,N,\e,\ell}_t)_{t\geq 0, i=1,\dots,N}$ to \eqref{pseea}.
For all $\ell\geq 1$, the family $\{(X^{1,N,\e,\ell}_t)_{t\ge 0}, \; N > \max\{2,\chi/(2\pi)\},
\; \e \in (0,1)\}$ is tight in $C([0,\infty),\rd)$.
\end{lem}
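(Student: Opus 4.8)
The strategy mirrors exactly the proof of Lemma~\ref{tight}-(ii), replacing Corollary~\ref{corcru} by Lemma~\ref{ptiti}-(ii). Fix $\ell\geq 1$. Since $C([0,\infty),\rd)$ carries the topology of uniform convergence on compact intervals, it suffices to show that for each $T>0$ the family $\{(X^{1,N,\e,\ell}_t)_{t\in[0,T]},\; N>\max\{2,\chi/(2\pi)\},\;\e\in(0,1)\}$ is tight in $C([0,T],\rd)$. Write
\begin{equation*}
X^{1,N,\e,\ell}_t=X^{1}_0 + \sqrt 2 B^1_t + J^{1,N,\e,\ell}_t,\qquad
J^{1,N,\e,\ell}_t:=\frac{\chi}{N}\sum_{j=2}^N\intot K_\e(X^{1,N,\e,\ell}_s-X^{j,N,\e,\ell}_s)\Phi^N_\ell((X^{k,N,\e,\ell}_s)_k)\,ds.
\end{equation*}
Since the law of $(X^1_0, (B^1_t)_{t\in[0,T]})$ does not depend on $N$ or $\e$, it is enough to prove tightness of $\{(J^{1,N,\e,\ell}_t)_{t\in[0,T]}\}$ in $C([0,T],\rd)$.

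\textbf{Hölder control of the drift increments.} Since $0\leq\Phi^N_\ell\leq 1$ and $|K_\e(x)|\leq|x|^{-1}/(2\pi)$, for $0\leq s<t\leq T$ and any $\alpha\in(0,1)$ the same computation as in Lemma~\ref{tight} gives
\begin{align*}
|J^{1,N,\e,\ell}_t-J^{1,N,\e,\ell}_s|
&\leq\frac{\chi}{2\pi N}\sum_{j=2}^N\int_s^t|X^{1,N,\e,\ell}_u-X^{j,N,\e,\ell}_u|^{-1}\,du\\
&\leq|t-s|^{(1-\alpha)/(2-\alpha)}\,\frac{\chi}{2\pi N}\sum_{j=2}^N\Big(\int_s^t|X^{1,N,\e,\ell}_u-X^{j,N,\e,\ell}_u|^{\alpha-2}\,du\Big)^{1/(2-\alpha)}
\leq Z^{N,\e,\ell}_T\,|t-s|^{\beta},
\end{align*}
with $\beta=(1-\alpha)/(2-\alpha)>0$ and $Z^{N,\e,\ell}_T:=(\chi/(2\pi N))\sum_{j=2}^N[1+\int_0^T|X^{1,N,\e,\ell}_u-X^{j,N,\e,\ell}_u|^{\alpha-2}\,du]$, using $x^{1/(2-\alpha)}\leq 1+x$.

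\textbf{Uniform integrability of $Z^{N,\e,\ell}_T$.} Here is where Lemma~\ref{ptiti}-(ii) enters. Fix any $\alpha\in(0,1)$ and restrict to $N>\max\{2,\chi/(2\pi)\}$; then $2\alpha-\chi/(\pi N)$ is bounded below away from $0$ uniformly over such $N$ (indeed it increases to $2\alpha$), so the bound of Lemma~\ref{ptiti}-(ii) yields $\sup_{\e\in(0,1),\,N>\max\{2,\chi/(2\pi)\}}\E[\int_0^T|X^{1,N,\e,\ell}_s-X^{2,N,\e,\ell}_s|^{\alpha-2}\,ds]<\infty$. By exchangeability this controls every summand of $Z^{N,\e,\ell}_T$, so $\sup_{\e,N}\E[Z^{N,\e,\ell}_T]=:C_{T,\ell}<\infty$, hence $\Pro(Z^{N,\e,\ell}_T>A)\leq C_{T,\ell}/A$ for all $A>0$, uniformly in $N$ and $\e$. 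Since $J^{1,N,\e,\ell}_0=0$ a.s., this gives $\Pro[(J^{1,N,\e,\ell}_t)_{t\in[0,T]}\notin\cK_A]\leq C_{T,\ell}/A$, where $\cK_A=\{\gamma:[0,T]\to\rd:\gamma(0)=0,\ |\gamma(t)-\gamma(s)|\leq A|t-s|^\beta\ \forall\, s<t\}$ is compact in $C([0,T],\rd)$ by Ascoli. Letting $A\to\infty$ completes the proof. The one point requiring care is that $\ell$ is fixed throughout (so the constant $C_{T,\alpha,\ell}$ from Lemma~\ref{ptiti} is allowed to blow up as $\ell\to\infty$), and that the condition $N>\chi/(2\alpha\pi)$ in Lemma~\ref{ptiti}-(ii) is met since we may take $\alpha$ close to $1$ and in any case restrict to $N>\max\{2,\chi/(2\pi)\}$; there is no genuine obstacle beyond bookkeeping.
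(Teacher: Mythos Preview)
Your proof is correct and follows essentially the same approach as the paper. One point to tighten: the sentence ``Fix any $\alpha\in(0,1)$ \dots then $2\alpha-\chi/(\pi N)$ is bounded below away from $0$ uniformly over such $N$'' is false as stated (take e.g.\ $\alpha$ small); as you yourself note at the end, one must first set $N_0=\lfloor\max\{2,\chi/(2\pi)\}\rfloor+1$ and then choose $\alpha\in(\chi/(2\pi N_0),1)$, exactly as the paper does, so that $2\alpha-\chi/(\pi N)\geq 2\alpha-\chi/(\pi N_0)>0$ for all $N\geq N_0$.
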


\begin{proof}
We fix $\ell\geq 1$ and $T>0$. 
As in the proof of Lemma \ref{tight}, the only difficulty is to prove that
the family $\{(J^{1,N,\e,\ell}_t)_{t\in [0,T],i=1,\dots,N}, \; N \geq N_0,
\; \e \in (0,1) > 0\}$ is tight in $C([0,T],\rd)$, where $N_0=\lfloor \max\{2,\chi/(2\pi) \rfloor +1$ and
$$
J^{1,N,\e,\ell}_t=
\frac \chi N \sum_{j=1}^N \intot K_\e(X^{i,N,\e,\ell}_s-X^{j,N,\e,\ell}_s) 
\Phi^N_\ell((X^{k,N,\e,\ell}_s)_{k=1,\dots,N}) ds.
$$
We consider $\alpha \in (0,1)$ such that
$2\alpha - \chi/(\pi N_0)>0$, so that, by Lemma \ref{ptiti},
\begin{equation}\label{rst}
\sup_{N\geq N_0, \e \in (0,1)} \E\Big[ \int_0^{T} |X^{1,N,\e,\ell}_s - X^{2,N,\e,\ell}_s|^{\alpha-2} ds\Big] <\infty.
\end{equation}
Using that $|\Phi^N_\ell| \leq 1$, we check as in the proof of Lemma \ref{tight} that 
for all $0\leq s<t\leq T$, we have $|J^{1,N,\e,\ell}_t-J^{1,N,\e,\ell}_s| \leq Z^{N,\e,\ell}_T |t-s|^\beta$,
where $\beta = (1-\alpha)/(2-\alpha)$ and where
$$
Z^{N,\e,\ell}_T=\frac{\chi}{2\pi N} \sum_{j=2}^N \Big[1+\int_0^T |X^{1,N,\e,\ell}_s - X^{j,N,\e,\ell}_s|^{\alpha-2} ds \Big].
$$
But \eqref{rst} and exchangeability imply that $\sup_{N\geq N_0, \e \in (0,1)} \E[Z^{N,\e,\ell}_T]<\infty$.
We conclude exactly as in the proof of  Lemma \ref{tight}.
\end{proof}

We now make $\e$ tend to $0$ in the particle system \eqref{pseea}, simultaneously for all $\ell\geq 1$.

\begin{lem}\label{existpseac}
Let $\chi>0$, $N>\max\{2,\chi/(2\pi)\}$ and $f_0 \in \cP_1(\rd)$ be fixed.
There exists, on some probability space endowed with some filtration $(\cF_t)_{t\geq 0}$, a family $(X^i_0)_{i=1,\dots,N}$
of i.i.d. $f_0$-distributed $\cF_0$-measurable random variables, a $2N$-dimensional $(\cF_t)_{t\geq 0}$-Brownian
motion $(B^i_t)_{i=1,\dots,N,t\geq 0}$ and, for each $\ell\geq 1$, an $(\cF_t)_{t\geq 0}$-adapted solution to 
\begin{equation}\label{psea}
X^{i,N,\ell}_t=X^{i}_0 + \sqrt 2 B^i_t + \frac \chi N \sum_{j=1}^N \intot K(X^{i,N,\ell}_s-X^{j,N,\ell}_s) 
\Phi^N_\ell((X^{k,N,\ell}_s)_{k=1,\dots,N}) ds .
\end{equation}
The family $\{ (X^{i,N,\ell}_t)_{t\geq 0, \ell\geq 1} \; i=1,\dots, N\}$ is furthermore exchangeable.
Moreover, for all $\ell\geq 1$, all $t>0$, we have 
$\E[(1+|X^{1,N,\ell}_{t}|^2)^{1/2}] \leq \langle f_0,\sqrt{1+|x|^2}\rangle + 2 t$
and, for all $\alpha \in (\chi/(2\pi N),1)$,
$$
\E\Big[\int_0^t |X^{1,N,\ell}_s-X^{2,N,\ell}_s|^{\alpha-2}] <\infty. 
$$
Finally, we have the following compatibility property:
for all $\ell'\geq\ell\geq 1$, a.s., $(X^{i,N,\ell}_t)_{i=1,\dots,N}=(X^{i,N,\ell'}_t)_{i=1,\dots,N}$ for all
$t \in [0,\tau_N^\ell)$, where
$$
\tau_N^\ell=\inf\{t\geq 0 \; : \; (X^{i,N,\ell}_t)_{i=1,\dots,N} \notin D^N_{\ell}\}.
$$
\end{lem}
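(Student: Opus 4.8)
The plan is to obtain the processes $(X^{i,N,\ell}_t)$ by letting $\e\to 0$ in the regularized systems \eqref{pseea}, \emph{simultaneously over all} $\ell\ge 1$, while keeping the common initial data and driving Brownian motions fixed, and then to transfer to the limit the compatibility property, which holds at the regularized level by pathwise uniqueness. For each fixed $\ell$, Lemma \ref{titi} together with exchangeability gives tightness of $\{(X^{i,N,\e,\ell}_t)_{i\le N,t\ge 0}:\e\in(0,1)\}$ in $C([0,\infty),(\rd)^N)$. Since the law of $((X^i_0)_i,(B^i_t)_{i,t})$ does not depend on $\e$ nor on $\ell$, the family of laws of $\big(((X^{i,N,\e,\ell}_t)_{t\ge 0})_{i\le N,\ell\ge 1},((B^i_t)_{t\ge 0})_{i\le N}\big)$, $\e\in(0,1)$, is tight in the Polish space $\big(C([0,\infty),(\rd)^N)\big)^{\nn}\times C([0,\infty),(\rd)^N)$ (a countable product of tight families is tight). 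I extract a sequence $\e_k\searrow 0$ along which this joint law converges, use the Skorokhod representation theorem to realize the convergence almost surely on a new probability space, call the limits $(X^{i,N,\ell}_t)_{i,\ell,t}$ and $(B^i_t)_{i,t}$, set $X^i_0=X^{i,N,\ell}_0$ (independent of $\ell$, since $X^{i,N,\e,\ell}_0=X^i_0$ at the regularized level), and $\cF_t=\sigma\big((X^{i,N,\ell}_s,B^i_s)_{i\le N,\ell\ge 1,s\le t}\big)$.

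Next I identify the limit, exactly as in Steps 2--3 of the proof of Theorem \ref{pse}: $(X^i_0)_i$ is i.i.d.\ $f_0$-distributed; $(B^i_t)_{i,t}$ is a $2N$-dimensional $(\cF_t)$-Brownian motion, because at level $\e_k$ the whole family $(X^{i,N,\e_k,\ell}_t)_{i,\ell}$ is adapted to $\sigma((X^j_0,B^j_s)_{j,s\le t})$ (strong solutions), a property preserved by the Skorokhod transfer and by the limit when tested against bounded continuous functions; and one passes to the limit in \eqref{pseea} to obtain \eqref{psea}. The only delicate term is $\intot K_{\e_k}(X^{i,N,\e_k,\ell}_s-X^{j,N,\e_k,\ell}_s)\,\Phi^N_\ell(\cdots)\,ds$: using $0\le\Phi^N_\ell\le 1$, the bound \eqref{ar} comparing $K_{\e_k}$ and $K$ to a fixed $K_\eta$, the uniform-in-$k$ estimate of $\E[\intot|X^{1,N,\e_k,\ell}_s-X^{2,N,\e_k,\ell}_s|^{\alpha-2}ds]$ from Lemma \ref{ptiti}(ii) with $\alpha\in(\chi/(2\pi N),1)$ (licit since $N>\chi/(2\pi)$), exchangeability, and the analogous Fatou bound for the limit, one replaces $K_{\e_k}$ and $K$ by $K_\eta$ up to an $O(\eta^{1-\alpha})$ error, passes to the limit in the now bounded continuous integrand, and lets $\eta\searrow 0$. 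The exchangeability of $\{(X^{i,N,\ell}_t)_{t,\ell},\,i\}$ descends from that of the symmetric systems \eqref{pseea}, while the moment bound and the bound on $\E[\intot|X^{1,N,\ell}_s-X^{2,N,\ell}_s|^{\alpha-2}ds]$ follow from Lemma \ref{ptiti}(i),(ii) by Fatou.

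It remains to prove the compatibility property, which is the genuinely new point. Fix $\ell'\ge\ell\ge 1$. Because $\ell\le\ell'$ forces $D^N_\ell\subseteq D^N_{\ell'}$ and $\Phi^N_\ell=1\Rightarrow\Phi^N_{\ell'}=1$, the globally Lipschitz drifts of \eqref{pseea} corresponding to $\ell$ and to $\ell'$ coincide on the open set $D^N_\ell$; hence, by pathwise uniqueness for \eqref{pseea} together with a standard localization at the exit time $\tau^{\e,\ell}_N:=\inf\{t:(X^{i,N,\e,\ell}_t)_i\notin D^N_\ell\}$, one gets $(X^{i,N,\e,\ell}_t)_i=(X^{i,N,\e,\ell'}_t)_i$ for all $t<\tau^{\e,\ell}_N$, a.s.\ — a Borel property of the paths, hence also valid for the Skorokhod copies. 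To pass to the limit, I fix an $\omega$ in the full-probability set where all the convergences hold uniformly on compact time intervals, and fix $t<\tau^\ell_N$; then $\{(X^{i,N,\ell}_s)_i:s\in[0,t]\}$ is a compact subset of the open set $D^N_\ell$, so uniform convergence forces $(X^{i,N,\e_k,\ell}_s)_i\in D^N_\ell$ on all of $[0,t]$ for $k$ large, i.e.\ $\tau^{\e_k,\ell}_N>t$; therefore $(X^{i,N,\e_k,\ell}_t)_i=(X^{i,N,\e_k,\ell'}_t)_i$ for $k$ large, and letting $k\to\infty$ yields $(X^{i,N,\ell}_t)_i=(X^{i,N,\ell'}_t)_i$, as desired.

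The technical heart of the argument is the passage to the limit in the singular drift in the second step, but that is term-by-term a rerun of the proof of Theorem \ref{pse}; the genuinely new work is the third step, and the point requiring care there is precisely to see that $\tau^{\e_k,\ell}_N$ eventually exceeds any given $t<\tau^\ell_N$ — which is where the characterization of $\tau^\ell_N$ as an exit time from the \emph{open} set $D^N_\ell$ is used — so that the regularized-level identity survives the limit.
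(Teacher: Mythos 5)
Your proposal is correct and follows essentially the same route as the paper: joint tightness over all $\ell$ in the countable product space, Skorokhod representation, identification of the limit as in the proof of Theorem \ref{pse} using the uniform bound of Lemma \ref{ptiti}, and compatibility via pathwise uniqueness for the regularized systems (the drifts agreeing on $D^N_\ell$ since $\Phi^N_\ell=\Phi^N_{\ell'}=1$ there) combined with uniform-on-compacts convergence and the openness of $D^N_\ell$. Your compactness phrasing of the last step is just a reformulation of the paper's observation that $\tau_N^\ell\leq\liminf_k\tau_N^{\ell,k}$ because $(D^N_\ell)^c$ is closed.
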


\begin{proof} We thus fix $\chi>0$, $N>\max\{2,\chi/(2\pi)\}$ and $f_0 \in \cP_1(\rd)$ 
and divide the proof in several steps.

\vip

{\it Step 1.} We know from Lemma \ref{titi} that for each $\ell\geq 1$, the family
$\{(X^{1,N,\e,\ell}_t)_{t\geq 0},\; \e \in (0,1)\}$ is tight in $C([0,\infty),\rd)$.
By exchangeability, $\{(X^{i,N,\e,\ell}_t)_{t\geq 0,i=1,\dots,N},\; \e \in (0,1)\}$ 
is thus tight in $C([0,\infty),(\rd)^N)$, still for each $\ell\geq 1$. Since $C([0,\infty),(\rd)^N)$ endowed with the topology of uniform convergence on compact subsets of $[0,\infty)$ is a Polish space, by the Prokhorov theorem,
for all $\eta>0$, we can find  a compact subset $\cK^\ell_\eta$ of $C([0,\infty),(\rd)^N))$ such that
$\sup_{\e\in(0,1)} \Pro((X^{i,N,\e,\ell}_t)_{t\geq 0,i=1,\dots,N} \notin \cK^\ell_\eta)\leq \eta 2^{-\ell}$.
We now introduce $\cK_\eta:=\prod_{\ell \geq 1} \cK^\ell_\eta$, which is a compact subset of
$[C([0,\infty),(\rd)^N)]^\nn$ (endowed with the product topology) by Tychonoff's theorem. It holds that 
$$
\sup_{\e\in(0,1)} \Pro(((X^{i,N,\e,\ell}_t)_{t\geq 0,i=1,\dots,N})_{\ell\geq 1} \notin \cK_\eta)\leq 
\sum_{\ell\geq 1}\sup_{\e\in(0,1)} \Pro((X^{i,N,\e,\ell}_t)_{t\geq 0,i=1,\dots,N} \notin \cK^\ell_\eta)
\leq \eta.
$$
Consequently, the family  $\{((X^{i,N,\e,\ell}_t)_{t\geq 0,i=1,\dots,N})_{\ell \geq 1},\; \e \in (0,1)\}$ 
is tight in $[C([0,\infty),(\rd)^N)]^\nn$.
Finally, we conclude that the family 
$$
\{(((X^{i,N,\e,\ell}_t)_{t\geq 0,i=1,\dots,N})_{\ell \geq 1},(B^i_t)_{t\geq 0,i=1,\dots,N}),\; \e \in (0,1)\}
$$ 
is tight in $[C([0,\infty),(\rd)^N)]^\nn\times C([0,\infty),(\rd)^N)$.

\vip

{\it Step 2.} We now use the Skorokhod representation theorem: we can find a sequence $\e_k\searrow 0$
and a sequence $(((\tX^{i,N,\e_k,\ell}_t)_{t\geq 0,i=1,\dots,N})_{\ell \geq 1},(\tB^{i,k}_t)_{t\geq 0,i=1,\dots,N})$
going a.s. in $[C([0,\infty),(\rd)^N)]^\nn\times C([0,\infty,(\rd)^N)$ to some 
$(((X^{i,N,\ell}_t)_{t\geq 0,i=1,\dots,N})_{\ell \geq 1},(B^{i}_t)_{t\geq 0,i=1,\dots,N})$
and such that, for each $\ell\geq 1$,
each $k\geq 1$, $(\tX^{i,N,\e_k,\ell}_t)_{t\geq 0,i=1,\dots,N}$ solves \eqref{pseea} with the Brownian
motions $(\tB^{i,k}_t)_{t\geq 0,i=1,\dots,N}$ and some i.i.d. $f_0$-distributed initial conditions 
$(\tX^{i,N,\e_k}_0)_{i=1,\dots,N}$ (not depending on $\ell\geq1$).
The exchangeability of $\{(X^{i,N,\ell}_t)_{t\geq 0,\ell \geq 1},\;i=1,\dots,N\}$ is inherited from that
of $\{(\tX^{i,N,\e_k,\ell}_t)_{t\geq 0,\ell \geq 1},\;i=1,\dots,N\}$. Next, Lemma \ref{ptiti} and the Fatou Lemma
imply that for all $t\geq 0$, all $\ell\geq 1$,
$$
\max\Big\{\E[(1+|X^{1,N,\ell}_t|^2)^{1/2}], \sup_{k\geq 1} \E[(1+|X^{1,N,\e_k,\ell}_t|^2)^{1/2}]\Big\} 
\leq  \langle f_0,\sqrt{1+|x|^2}\rangle + 2 t
$$
and that, for all $\alpha \in (\chi/(2\pi N),1)$, all $T>0$, all $\ell\geq 1$,
$$
\E\Big[\int_0^T |X^{1,N,\ell}_s -X^{2,N,\ell}_s |^{\alpha-2} ds \Big]
+ \sup_{k\geq1} \Big[\int_0^T |X^{1,N,\e_k,\ell}_s -X^{2,N,\e_k,\ell}_s |^{\alpha-2} ds  \Big]
<\infty.
$$

{\it Step 3.} We introduce $\cF_t=\sigma((X^{i,N,\ell}_s,B^i_s)_{i=1,\dots,N,s\in[0,t]})$, to which
$(X^{i,N,\ell}_t)_{t\geq 0,i=1,\dots,N}$ is of course adapted for each $\ell\geq 1$.
We clearly have $X^{i,N,\ell}_0=X^{i,N,\ell'}_0$ for all $i=1,\dots,N$ and all $\ell,\ell'\geq 1$
(because $\tX^{i,N,\e_k,\ell}_0=\tX^{i,N,\e_k,\ell'}_0$ for all $k\geq 1$,  all $i=1,\dots,N$ and all $\ell,\ell'\geq 1$).
We thus may define $X^i_0:=X^{i,N,\ell}_0$ for all $i=1,\dots,N$, for any value of $\ell$.
The family $(X^{i}_0)_{i=1,\dots,N}$
consists of i.i.d. $f_0$-distributed random variables (because it is the limit of such objects).
Finally, one checks as in the proof of Theorem \ref{pse}-Step 2
$(B^{i}_t)_{t\geq 0,i=1,\dots,N}$ is $2N$-dimensional $(\cF_t)_{t\geq 0}$-Brownian motion.

\vip

{\it Step 4.} It is checked exactly as in the proof of Theorem \ref{pse}-Step 3 that for each $\ell\geq 1$,
$(X^{i,N,\ell}_t)_{t\geq 0, i=1,\dots,N}$ solves \eqref{psea}: it suffices to pass to the limit in probability
as $k\to \infty$ in the equation satisfied by $(\tX^{i,N,\e_k,\ell}_t)_{t\geq 0, i=1,\dots,N}$,
using the estimates proved in Step 2 and that $\Phi^N_\ell$ is continuous.

\vip

{\it Step 5.} It only remains to prove the compatibility property.
We introduce, for $\ell\geq 1$ and $k\geq 1$,
$$
\tau_N^{\ell,k}:=\inf\{t\geq 0 \; : \; (\tX^{i,N,\e_k,\ell}_t)_{i=1,\dots,N} \notin D^N_{\ell}\}
\quad \hbox{and}\quad \tau_N^{\ell}:=\inf\{t\geq 0 \; : \; (X^{i,N,\ell}_t)_{i=1,\dots,N} \notin D^N_{\ell}\}.
$$
Since $(\tX^{i,N,\e_k,\ell}_t)_{t\geq 0,i=1,\dots,N}$ goes a.s. to  $(X^{i,N,\e_k,\ell}_t)_{t\geq 0,i=1,\dots,N}$
in $C([0,\infty),(\rd)^N)$ and since $(D^N_{\ell})^c$ is an closed subset of $(\rd)^N$, we deduce
that $\tau_N^\ell\leq \liminf_{k\to\infty} \tau_N^{\ell,k}$.
But for all $\ell'\geq\ell\geq 1$, we have $(\tX^{i,N,\e_k,\ell}_t)_{i=1,\dots,N}=(\tX^{i,N,\e_k,\ell'}_t)_{i=1,\dots,N}$
on the time interval $[0,\tau_N^{\ell,k}]$ for any $k\geq 1$: this follows from the 
pathwise uniqueness for \eqref{psea} and from the fact that $\Phi^N_\ell=\Phi^N_{\ell'}=1$ on $D^N_\ell$.
Using finally that $(\tX^{i,N,\e_k,\ell}_t,\tX^{i,N,\e_k,\ell'}_t)_{t\geq 0,i=1,\dots,N}$ goes a.s. to  
$(X^{i,N,\ell}_t,X^{i,N,\ell'}_t)_{t\geq 0,i=1,\dots,N}$ in $C([0,\infty),(\rd)^N\times(\rd)^N)$,
we conclude that indeed, $(X^{i,N,\ell}_t)_{i=1,\dots,N}=(X^{i,N,\ell'}_t)_{i=1,\dots,N}$ on $[0,\tau_N^{\ell}]$.
\end{proof}

Finally, we let $\ell$ increase to infinity.

\begin{proof}[Proof of Theorem \ref{pse2}]
We fix $\chi>0$, $N>\max\{2,\chi/(2\pi)\}$ and $f_0\in \cP_1(\rd)$ such that
$f_0(\{x\})=0$ for all $x\in\rd$. We consider the objects built in Lemma \ref{existpseac}: 
the filtration $(\cF_t)_{t\geq 0}$, the $2N$-dimensional $(\cF_t)_{t\geq 0}$-Brownian motion $(B^i_t)_{i=1,\dots,N,t\geq 0}$,
the $(\cF_t)_{t\geq 0}$-adapted solution $(X^{i,N,\ell}_t)_{t\geq 0, i=1,\dots, N}$, for each $\ell\geq 1$, to \eqref{pseea}, 
and associated stopping times $\tau_N^\ell$.
Using the compatibility property, we deduce that $\tau_N^\ell$ is a.s. increasing (as a function of $\ell$)
and we define $\tau_N=\sup_{\ell\geq 1} \tau_N^\ell$.
Still using the compatibility property, we deduce that for all $t\in [0,\tau_N)$,
all $\ell$ such that $\tau_N^\ell \geq t$, all $\ell'\geq \ell$, 
$(X^{i,N,\ell}_t)_{i=1,\dots, N}=(X^{i,N,\ell'}_t)_{i=1,\dots, N}$.
Hence for $t\in [0,\tau_N)$, we can define $(X^{i,N}_t)_{i=1,\dots, N}$ as $(X^{i,N,\ell}_t)_{i=1,\dots, N}$ 
for any choice of $\ell$ such that  $\tau_N^\ell \geq t$.
Since $\Phi_\ell^N((X^{i,N,\ell}_t)_{i=1,\dots, N})=1$ for $t \in [0,\tau_N^\ell]$, by the definitions of $\Phi_N^\ell$
and of $\tau_N^\ell$, we conclude that indeed, $(X^{i,N}_t)_{t\in[0,\tau_N), i=1,\dots, N}$ solves \eqref{ps}
with the Brownian motions $(B^i_t)_{i=1,\dots,N,t\geq 0}$,
and that $\tau_N^\ell=\inf \{t\geq 0 \; : \; (X^{i,N}_t)_{i=1,\dots,N} \notin D^N_{\ell} \}$, so that 
$$
\tau_N = \sup_{\ell\geq 1}\;\inf\; \{t\geq 0 \; : \; (X^{i,N}_t)_{i=1,\dots,N} \notin D^N_{\ell} \}
$$ 
as in the statement.
The exchangeability and $(\cF_t)_{t\geq 0}$-adaptation of the family 
$\{(X^{i,N}_t)_{t\in[0,\tau_N)},\; i=1,\dots, N\}$ is of course
inherited from $\{(X^{i,N,\ell}_t)_{t \geq 0, \ell \geq 1},\; i=1,\dots,N\}$.
We also have a.s., for all $t \in [0,\tau_N)$, all $\alpha \in (\chi/(2\pi N),1)$,
$$
\intot |X^{1,N}_s-X^{2,N}_s|^{\alpha-2} ds = \intot |X^{1,N,\ell}_s-X^{2,N,\ell}_s|^{\alpha-2} ds
$$
as soon as $\ell$ is large enough so that $\tau_N^\ell\geq t$. This last quantity is a.s. finite
by  Lemma \ref{existpseac} again.

\vip

It remains to decide whether $\tau_N$ is finite or infinite.
For $I\subset\{1,\hdots,N\}$ with cardinality 
$|I|\geq 2$ and $t\in[0,\tau_N)$, let $\bar{X}^I_t=|I|^{-1}\sum_{i\in I}X^{i,N}_t$ and 
$R^I_t=2^{-1}\sum_{i\in I}|X^{i,N}_t-\bar{X}^I_t|^2$.

\vip

First assume that $\chi>8\pi(N-2)/(N-1)$. Consider $I_N=\{1,\dots,N\}$.
A direct computation using the It\^o formula 
(see \eqref{dynrI} in the proof of Lemma \ref{lemnoncol3part} below,
the last term obviously vanishes when $I=I_N$) shows that
$(R^{I_N}_t)_{t\in [0,\tau_N)}$ is a squared Bessel process
with dimension $(N-1)(2-\chi/4\pi)<2$, restricted to $[0,\tau_N)$. 
But a squared Bessel process with dimension smaller than $2$
a.s. reaches zero in finite time,
see \cite[page 442]{reyor}. We conclude that on the event $\{\tau_N=\infty\}$, $R^{I_N}$ reaches zero
in finite time, which of course implies that $\tau_N<\infty$.
Thus $\Pro(\tau_N=\infty)=0$ as desired.

\vip

Assume next that  $\chi \leq 8\pi(N-2)/(N-1)$.
Observe that for $(x_1,x_2,x_3)\in(\rd)^3$ and $\bar{x}=(x_1+x_2+x_3)/3$,
\begin{align*}|x_1-x_2|+|x_2-x_3|+|x_3-x_1|&\geq (|x_1-x_2|^2+|x_2-x_3|^2+|x_3-x_1|^2)^{1/2}\\
&=\sqrt{3}(|x_1-\bar{x}|^2+|x_2-\bar{x}|^2+|x_3-\bar{x}|^2)^{1/2}.
\end{align*}
Consequently, for $\ell\geq 1$,
\begin{align*}
\Pro(\tau_N<\infty)&=\Pro(\tau_N<\infty,\tau_N^\ell\leq \tau_N)\\
&=\Pro\Big(\tau_N<+\infty,\min_{i,j,k \; \hbox{{\tiny distinct}}}
\inf_{t\in[0,\tau_N)}(|X^i_t-X^j_t|+|X^j_t-X^k_t|+|X^k_t-X^i_t|)
\leq \frac{1}{\ell}\Big) \\
&\leq \Pro\Big(\tau_N<+\infty,\min_{I\;:\;|I|=3}\inf_{t\in[0,\tau_N)}R^I_t\leq \frac{1}{6\ell^2}\Big).
\end{align*}
This last quantity tends to $0$ as $\ell\to\infty$ thanks to the following Lemma, 
whence $\Pro(\tau_N<\infty)=0$.
\end{proof}

\begin{lem}\label{lemnoncol3part}
Let $N\geq 3$ and $\chi \in (0,8\pi(N-2)/(N-1)]$.
Consider $(X^{i,N}_t)_{t\in[0,\tau_N), i=1,\dots, N}$ built in the previous proof.
For $I\subset\{1,\hdots,N\}$ with cardinality 
$|I|\geq 2$ and $t\in[0,\tau_N)$, let $\bar{X}^I_t=|I|^{-1}\sum_{i\in I}X^{i,N}_t$ and
$R^I_t=2^{-1}\sum_{i\in I}|X^{i,N}_t-\bar{X}^I_t|^2$. 
For all $I\subset\{1,\hdots,N\}$ with $|I|\geq 3$,
\begin{equation*}
\Pro\Big(\tau_N<\infty,\inf_{t\in[0,\tau_N)}R^I_t=0\Big)=0.
\end{equation*}
\end{lem}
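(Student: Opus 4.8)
My plan is to argue by finite downward induction on $|I|$, from $|I|=N$ down to $|I|=3$, the engine being an explicit It\^o computation that displays $R^I$ as a squared Bessel process of dimension $\delta_I:=(|I|-1)\bigl(2-\chi|I|/(4\pi N)\bigr)$ perturbed by the interaction with the particles outside $I$.

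\emph{Step 1 (dynamics of $R^I$).} Setting $Y^i_t:=X^{i,N}_t-\bar X^I_t$, so that $\sum_{i\in I}Y^i_t=0$ and $R^I_t=\tfrac12\sum_{i\in I}|Y^i_t|^2$, I would apply It\^o's formula on $[0,\tau_N)$; this is legitimate since on each $[0,\tau_N^\ell)$ the process agrees with the globally defined $X^{i,N,\ell}$ and $\int_0^t|K(X^{i,N}_s-X^{j,N}_s)|\,ds<\infty$ by \eqref{fund2}. Because $\sum_{i\in I}Y^i_t=0$, the drift contributed by $\bar X^I$ cancels, the diffusion brackets combine to $4R^I_t\,dt$, and an antisymmetrisation of the pairwise terms with both indices in $I$ — using $z\cdot K(z)=-1/(2\pi)$ for $z\ne 0$ and the fact, from \eqref{fund2} and exchangeability, that two particles of $I$ coincide only on a Lebesgue-null set of times — should give
\begin{equation}\label{dynrI}
dR^I_t=2\sqrt{R^I_t}\,d\beta_t+\delta_I\,dt+\frac{\chi}{N}\Xi^I_t\,dt,\qquad
\Xi^I_t:=\sum_{i\in I}\sum_{k\notin I}Y^i_t\cdot K(X^{i,N}_t-X^{k,N}_t),
\end{equation}
with $\beta$ a standard one-dimensional Brownian motion obtained from the martingale part by the Dambis--Dubins--Schwarz theorem (after the usual enlargement). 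I would then note that $\chi\le 8\pi(N-2)/(N-1)$ is exactly what forces $\delta_I\ge 2$ for every $I$ with $3\le|I|\le N$, since $m\mapsto(m-2)/\bigl(m(m-1)\bigr)$ is minimal over $\{3,\dots,N\}$ at $m=N$.

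\emph{Step 2 (base case and control of $\Xi^I$).} For $|I|=N$ the last term in \eqref{dynrI} is absent, so $R^{\{1,\dots,N\}}$ is a true squared Bessel process of dimension $\ge2$ and hence stays away from $0$ (see \cite[page 442]{reyor}); this settles the base case. For the inductive step I would fix $I$ with $3\le|I|\le N-1$, assume the statement for every $J\supsetneq I$, apply it to $J=I\cup\{k\}$ for each of the finitely many $k\notin I$, and deduce that a.s. on $\{\tau_N<\infty\}$ the random quantity $\delta:=\min_{k\notin I}\inf_{[0,\tau_N)}R^{I\cup\{k\}}$ is strictly positive. Two elementary inequalities — $R^{I\cup\{k\}}_t\le 3R^I_t+|X^{k,N}_t-\bar X^I_t|^2$ and $|K(a)-K(b)|\le 3|a-b|/(2\pi|a||b|)$ — would then show that whenever $R^I_t\le\delta/6$ every outside particle lies at distance $\ge\sqrt{\delta/2}$ from $\bar X^I_t$, and, after subtracting the vanishing term $\sum_{i\in I}Y^i_t\cdot K(\bar X^I_t-X^{k,N}_t)$, that $|\Xi^I_t|\le C(\delta)\,R^I_t$.

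\emph{Step 3 (non-attainment of $0$).} Fixing $\delta_0>0$ and arguing on $\{\tau_N<\infty,\ \delta>\delta_0\}$ (letting $\delta_0\downarrow0$ along a sequence exhausts $\{\tau_N<\infty\}$), I would show that on any excursion of $R^I$ below $\eta:=\delta_0/6$ it satisfies $dR^I_t=2\sqrt{R^I_t}\,d\beta_t+b_t\,dt$ with $b_t\ge 2-C(\delta_0)R^I_t$; a one-dimensional comparison against the diffusion $d\rho_t=2\sqrt{\rho_t}\,d\beta_t+(2-C(\delta_0)\rho_t)\,dt$, whose scale function satisfies $s'(\rho)\sim\rho^{-1}$ near $0$ and hence $s(0^+)=-\infty$, then forces $R^I$ to exit $[0,\eta]$ through $\eta$ without ever reaching $0$. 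Since $R^I_0>0$ a.s. ($f_0$ being atomless), this gives $\inf_{[0,\tau_N)}R^I>0$ a.s. on $\{\tau_N<\infty,\ \delta>\delta_0\}$, completing the induction. The crux is Steps 2--3: $R^I$ is not an autonomous diffusion because of $\Xi^I_t$, which is a priori singular, and the point of the induction is precisely that, once no strict superset of $I$ can collapse, the outside particles stay uniformly away from the cluster while $R^I$ is small, whereupon the double use of $\sum_{i\in I}Y^i_t=0$ makes $\Xi^I_t$ a genuinely lower-order ($O(R^I_t)$) perturbation of the squared Bessel drift near $0$; the excursion/comparison step also disposes of the possibility that $\inf R^I$ might a priori be approached only as $t\uparrow\tau_N$.
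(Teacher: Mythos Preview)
Your proof follows the same backbone as the paper's: derive the It\^o dynamics \eqref{dynrI} for $R^I$, check that the hypothesis on $\chi$ forces $\delta_I\ge2$ for $3\le|I|\le N$, and run a backward induction on $|I|$ in which the hypothesis on the supersets $I\cup\{k\}$ controls the outside-interaction term while $R^I$ is small, allowing a comparison with a diffusion that does not reach $0$.

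Two points are worth noting. First, your centring trick---subtracting $\sum_{i\in I}Y^i_t\cdot K(\bar X^I_t-X^{k,N}_t)=0$ and using a Lipschitz estimate on $K$ away from the origin---yields $|\Xi^I_t|=O(R^I_t)$, which is sharper than the paper's bound $|\Xi^I_t|\le c\sqrt{R^I_t}$ obtained from $|K|\le 1/(2\pi\cdot\text{dist})$ directly (see their Step~3.2). Both bounds suffice, and your scale-function argument for non-attainment replaces the paper's Girsanov reduction to a reflected squared Bessel process; these are interchangeable.

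Second, your Step~3 needs one adjustment to be rigorous: you cannot simply ``argue on $\{\tau_N<\infty,\ \delta>\delta_0\}$'' and then run a stochastic comparison, because this event is not $\mathcal F_t$-measurable for any $t$ and conditioning on it destroys the Brownian property of $\beta$. The paper handles exactly this point by introducing the stopping time $\sigma_a=\inf\{t<\tau_N:\min_{j\notin I}R^{I_j}_t<a\}$, extending $R^I$ beyond $\sigma_a$ as a genuine squared Bessel process (their $R^{I,a}$), and performing the pathwise comparison globally; only afterwards do they intersect with $\{\sigma_a=\tau_N\}$ and let $a\downarrow0$ using the induction hypothesis. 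Your argument goes through once you make this replacement.
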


\begin{proof}[Proof of Lemma \ref{lemnoncol3part}] 
We divide the proof in several steps.

\vip

{\it Step 1.}
Since the initial conditions $(X^{i,N}_0)_{1\leq i\leq N}$ are independent and $f_0$-distributed
with $f_0(\{x\})=0$ for all $x\in\rd$, they are a.s. pairwise distinct and for all $I\subset\{1,\hdots,N\}$ 
with $|I|\geq 2$, $\Pro(R^I_0>0)=1$.
Also, by definition of $\tau_N$, we have a.s. $R^I_t>0$ for all $t \in [0,\tau_N)$ and all
$|I|\geq 3$.
For all $|I|\geq 3$ and $t\in [0,\tau_N)$, let 
$$
\beta^I_t=\int_0^t\frac{1}{\sqrt{2R^I_s}}\sum_{i\in I}(X^{i,N}_s-\bar{X}^I_s)\cdot dB^i_s.
$$ 
This process can easily be extended into a one-dimensional Brownian motion $(\beta^I_t)_{t\geq 0}$.
In the remaining of the step, we check that for $t\in[0,\tau_N)$,
\begin{equation}
dR^I_t=2\sqrt{R^I_t}d\beta^I_t+(|I|-1)\Big(2-\frac{\chi|I|}{4\pi N}\Big)dt
+\frac{\chi}{N}\sum_{i\in I}\sum_{j\notin I}(X^{i,N}_t-\bar{X}^I_t)\cdot K(X^{i,N}_t-X^{j,N}_t)dt.\label{dynrI}
\end{equation}
We work on $[0,\tau_N)$. Sarting from \eqref{ps} and setting $\bar B^I_t=|I|^{-1}\sum_{i\in I}B^i_t$,
$$
d(X^{i,N}_t-\bar X^I_t)= \sqrt{2}d(B^i-\bar B^I)_t
+\frac\chi N \Big[\sum_{j\ne i}K(X^{i,N}_t-X^{j,N}_t) - |I|^{-1} Z^{I}_t \Big]dt,
$$
where $Z^I_t=\sum_{k\in I} \sum_{j \ne k} K(X^{k,N}_t-X^{j,N}_t)$.
Using the It\^o formula, we thus find
\begin{align*}
d |X^{i,N}_t-\bar X^I_t|^2 =&2\sqrt 2 (X^{i,N}_t-\bar X^I_t)\cdot (dB^i_t-d\bar B^I_t) 
+4\frac{|I|-1}{|I|}dt \\
&+\frac {2\chi} N (X^{i,N}_t-\bar X^I_t)\cdot
\Big[\sum_{j\ne i} K(X^{i,N}_t-X^{j,N}_t) - |I|^{-1} Z^I_t\Big]dt
\end{align*}
and thus
\begin{align*}
dR^I_t=&\sqrt 2 \sum_{i\in I} (X^{i,N}_t-\bar X^I_t)\cdot (dB^i_t-d\bar B^I_t) 
+2(|I|-1)dt\\
&+\frac {\chi} N \sum_{i\in I} (X^{i,N}_t-\bar X^I_t)\cdot
\Big[\sum_{j\ne i} K(X^{i,N}_t-X^{j,N}_t) - |I|^{-1} Z^I_t\Big]dt
\end{align*}
We now observe that $\sum_{i\in I} (X^{i,N}_t-\bar X^I_t)\cdot (dB^i_t-d\bar B^I_t)=
\sum_{i\in I} (X^{i,N}_t-\bar X^I_t)\cdot d B^i_t=\sqrt{2R^I_t}d\beta^I_t$ and that
$\sum_{i\in I} (X^{i,N}_t-\bar X^I_t)Z^I_t=0$, so that
$$
dR^I_t=2 \sqrt{R^I_t}d\beta^I_t +2(|I|-1)dt 
+\frac {\chi} N \sum_{i\in I}\sum_{j\ne i} (X^{i,N}_t-\bar X^I_t)\cdot K(X^{i,N}_t-X^{j,N}_t) dt
$$
To conclude the proof of \eqref{dynrI}, it suffices to note that
$\sum_{i,j\in I,j\ne i} \bar X^I_t\cdot K(X^{i,N}_t-X^{j,N}_t)=0$ and that 
$$
\sum_{i,j\in I,j\ne i} X^{i,N}_t\cdot K(X^{i,N}_t-X^{j,N}_t)= \frac 1 2 \sum_{i,j\in I,j\ne i} 
(X^{i,N}_t-X^{j,N}_t)\cdot K(X^{i,N}_t-X^{j,N}_t)= -\frac{|I|(|I|-1)}{4\pi}.
$$

{\it Step 2: A key observation.} We see in \eqref{dynrI} that, up to the third-term in the right-hand side, 
the process $R^I$ evolves like the square of a Bessel process of dimension 
$(|I|-1)(2-\chi|I|/(4\pi N))$. 
As we will show in a few lines, the condition $\chi \in(0,8\pi(N-2)/(N-1)]$
implies that 
\begin{equation}\label{ko}
\min_{n=3,\dots,N} (n-1)(2-\chi n/(4\pi N)) \geq 2.
\end{equation}
Since by \cite[page 442]{reyor} a squared Bessel process of dimension $\delta\geq 2$ a.s. 
never reaches zero, we expect that indeed, for any $|I|\geq 3$, $R^I$ a.s. never reaches zero.

\vip

To check \eqref{ko}, observe that $\phi(x)=(x-1)(2- \chi x/(4\pi N))$ is concave,
so that we only have to verify that $\phi(3)\geq 2$ and $\phi(N)\geq 2$.
First, $\phi(N)\geq 2$ is equivalent to our condition that $\chi \leq 8\pi(N-2)/(N-1)$.
Next, $\phi(3)\geq 2$ is equivalent to $\chi \leq 4\pi N /3$.
Finally, it is not hard to verify that, $N\geq 3$ being an integer,
we always have $8\pi(N-2)/(N-1)\leq 4\pi N /3$.

\vip

{\it Step 3.} We now prove by backward induction that for all $n=3,\dots,N$,
\begin{equation}\label{pn}
\forall\; I \subset \{1,\dots,N\} \hbox{ with } |I|=n,\;\;\Pro\Big(\tau_N<\infty,\inf_{t\in[0,\tau_N)}R^I_t=0\Big)=0.
\end{equation}

We first observe that \eqref{pn} is clear when $n=N$. Indeed, $|I|=N$ implies that
$I=\{1,\dots,N\}$, so that the third 
term in the right-hand side of \eqref{dynrI} vanishes and $(R^I_t)_{t\in [0,\tau_N)}$
is a (true) squared Bessel process with dimension $(N-1)(2-\chi/(4\pi)) \geq 2$ restricted to
$[0,\tau_N)$. Hence $\inf_{[0,\tau_N)} R^I_t>0$ a.s. on the event $\{\tau_N<\infty\}$.

\vip

We now assume that \eqref{pn} holds for some $n\in\{4,\hdots,N\}$ and check that it also holds for $n-1$. 
We thus consider some fixed $I\subset\{1,\hdots,N\}$ with cardinality $n-1$.
We have to prove that a.s. on $\{\tau_N<\infty\}$, $\inf_{[0,\tau_N)}R^I_t>0$.
For each $j\in\{1,\hdots,N\}\setminus I$, we introduce $I_j=I\cup\{j\}$.

\vip

{\it Step 3.1.} We claim that for each $j\in\{1,\hdots,N\}\setminus I$, 
each $(x_1,\hdots,x_N)\in(\rd)^N$, setting $\bar x^I=(n-1)^{-1}\sum_{i\in I} x_i$ and 
$\bar x^{I_j}=n^{-1}\sum_{i\in I^j} x_i$, 
$$
(2n-3)\min_{k\in I}|x_k-x_j|^2 \geq n\sum_{i\in I_j}|x_i-\bar{x}^{I_j}|^2-3(n-1)\sum_{i\in I}|x_i-\bar{x}^{I}|^2.
$$
We fix $k\in I$ and start from 
$|x_k-x_j|^2=\sum_{i\in I}|x_i-x_j|^2-\sum_{i\in I,i\neq k}|x_i-x_j|^2$ whence,
since $\sum_{i\in I,i\neq k}|x_i-x_j|^2 \leq 2 (n-2)|x_k-x_j|^2 +2\sum_{i\in I,i\neq k}|x_i-x_k|^2$, 
$$
(2n-3)|x_k-x_j|^2 \geq \sum_{i\in I}|x_i-x_j|^2-2\sum_{i\in I,i\neq k}|x_i-x_k|^2.
$$
But one easily checks that $2\max_{k\in I}\sum_{i\in I,i\neq k}|x_i-x_k|^2\leq \sum_{i,k \in I} |x_i-x_k|^2$, whence
$$
(2n-3)\min_{k\in I}|x_k-x_j|^2 \geq  \sum_{i\in I}|x_i-x_j|^2-\sum_{i,k \in I} |x_i-x_k|^2
=\frac 12 \sum_{i,k \in I_j}|x_i-x_k|^2-\frac{3}{2}\sum_{i,k\in I}|x_i-x_k|^2.
$$
The claim then follows from the facts that $\sum_{i,k \in I_j}|x_i-x_k|^2= 2n \sum_{i \in I^j}|x_i-\bar x^{I_j}|^2$
and that $\sum_{i,k\in I}|x_i-x_k|^2=2(n-1)\sum_{i \in I}|x_i-\bar x^{I}|^2 $.

\vip

{\it Step 3.2.} We now fix $a>0$ and $b=a/3$. Step 3.1 implies that when 
$\min_{j\notin I}R^{I_j}_t\geq a$ and $R^I_t\leq b$, we have
\begin{equation}
   \min_{k\in I, j \notin I} |X^{k,N}_t-X^{j,N}_t|^2 \geq \frac{2an}{2n-3}-\frac{6(n-1)b}{2n-3}=\frac{2a}{2n-3},\label{IIj}
\end{equation}
whence
$$
\max_{k\in I, j \notin I} |K(X^{k,N}_t-X^{j,N}_t)| \leq \frac{\sqrt{2n-3}}{2\pi\sqrt{2a}}.
$$
Hence one may bound the third term in the right-hand side of \eqref{dynrI} from below:
\begin{align}\label{minodrift}
\indiq_{\{\min_{j\notin I}R^{I_j}_t\geq a,R^I_t\leq b\}} &\frac \chi N\sum_{i\in I}\sum_{j\notin I}(X^{i,N}_t-\bar{X}^I_t)\cdot
K(X^{i,N}_t-X^{j,N}_t)\\
& \ge - \frac{\chi\sqrt{2n-3} }{2\pi N\sqrt{2a}} \sum_{i\in I}\sum_{j\notin I}|X^{i,N}_t-\bar{X}^I_t|
\geq - c \sqrt{R^I_t}, \nonumber
\end{align}
with $c:=(N+1-n)\chi\sqrt{(2n-3)(n-1)}/(2\pi N \sqrt{a})$.
Let us now define the stopping time 
$$
\sigma_a=\inf \big\{t\in[0,\tau_N):\min_{j\notin I}R^{I_j}_t<a\big\}
$$ 
with convention $\inf\emptyset=\tau_N$ and introduce the process $(R^{I,a}_t)_{t\in [0,\tau_N)}$ 
defined by $R^{I,a}_t=R^I_t$ for $t\in [0,\sigma_a)$ and, when $\sigma_a<\tau_N$,
by being the unique solution, for $t\in [\sigma_a,\tau_N)$, to 
\begin{align*}
R^{I,a}_t=&R^{I}_{\sigma_a}+2\int_{\sigma_a}^t\sqrt{R^{I,a}_s}d\beta^I_s+(|I|-1)
\Big(2-\frac{\chi|I|}{4\pi N}\Big)(t-\sigma_a).
\end{align*}
The existence of a pathwise unique solution to this equation follows from \cite[Theorem 3.5 p 390]{reyor}.
We deduce from \eqref{dynrI} that this process satisfies, for all $t\in [0,\tau_N)$,
\begin{align*}
R^{I,a}_t=&R^I_0+2\int_{0}^t\sqrt{R^{I,a}_s}d\beta^I_s+(|I|-1)
\Big(2-\frac{\chi|I|}{4\pi N}\Big)t\\
&+\frac{\chi}{N}\int_0^t\indiq_{\{s<\sigma_a\}}\sum_{i\in I}\sum_{j\notin I}(X^{i,N}_s-\bar{X}^I_s)
\cdot K(X^{i,N}_s-X^{j,N}_s)ds.
\end{align*}

{\it Step 3.3.} Recall that $a>0$ and $b=a/3$ are fixed and that $c>0$ has been defined in Step 3.2. 
The existence of a solution $(\underline{R}^{I,b}_t)_{t\geq 0}$ such that
$\Pro(\forall t\geq 0,\;\underline{R}^{I,b}_t \in(0,b])=1$ to the SDE 
reflected at the level $b$
\begin{equation}\label{besref}
\begin{cases}
   \underline{R}^{I,b}_t=R^I_0\wedge b+2\int_0^t\sqrt{\underline{R}^{I,b}_s}d\beta^I_s+(|I|-1)
\Big(2-\frac{\chi|I|}{4\pi N}\Big)t-c\int_0^t\sqrt{\underline{R}^{I,b}_s}ds-L_t\\
(L_s)_{s\geq 0}\mbox{ is an adapted increasing process such that }L_0=0\mbox{ and }
\int_0^t(b-\underline{R}^{I,b}_s)dL_s=0
   \end{cases}   
\end{equation}
will be checked in Step 4 using that $|I|\geq 3$.
We take this for granted and show that a.s., for all
$t\in[0,\tau_N)$, $R^{I,a}_t \geq \underline{R}^{I,b}_t$.

\vip

By \cite[Lemma 3.3 p 389]{reyor} with the choice $\rho(u)=|u|$, 
the local time at $0$ of the continuous semimartingale 
$S_t=\underline{R}^{I,b}_t-R^{I,a}_t$ vanishes. Indeed, it suffices that a.s.,
$\intot (\rho(S_s))^{-1}d\langle S,S \rangle_s <\infty$, which follows from the fact that
$d\langle S,S \rangle_s = 4 (\sqrt{\underline{R}^{I,b}_s}-\sqrt{R^{I,a}_s})^2 ds 
\leq 4|\underline{R}^{I,b}_s-R^{I,a}_s |ds=4\rho(S_s)ds$.

\vip

Hence, setting $x^+=\max(x,0)$, one has, by Tanaka's formula, for all $t\in [0,\tau_N)$,
\begin{align*}
(\underline{R}^{I,b}_t-R^{I,a}_t)^+=&
(\underline{R}^{I,b}_0-R^{I,a}_0)^++\int_0^t\indiq_{\{\underline{R}^{I,b}_s>R^{I,a}_s\}}d(\underline{R}^{I,b}_s-R^{I,a}_s).
\end{align*}
Since $\underline{R}^{I,b}_0-R^{I,a}_0=R^I_0\wedge b-R^I_0\leq 0$, we find
\begin{align*}
(\underline{R}^{I,b}_t-&R^{I,a}_t)^+\leq 2\int_0^t\indiq_{\{\underline{R}^{I,b}_s>R^{I,a}_s\}}
\Big(\sqrt{\underline{R}^{I,b}_s}-\sqrt{R^{I,a}_s}\Big)d\beta^I_s - \intot\indiq_{\{\underline{R}^{I,b}_s>R^{I,a}_s\}} dL_s\\
&+\intot \indiq_{\{\underline{R}^{I,b}_s>R^{I,a}_s\}} \Big(-c\sqrt{\underline{R}^{I,b}_s}-
\frac{\chi}{N}\indiq_{\{s<\sigma_a\}}\sum_{i\in I}
\sum_{j\notin I}(X^{i,N}_s-\bar{X}^I_s)\cdot K(X^{i,N}_s-X^{j,N}_s)\Big) ds.
\end{align*}
Since $L$ is an increasing process, the second term on the right-hand side is nonpositive.
The third term on the right-hand side is also nonpositive, because 
$s<\sigma_a$ implies that $R^{I,a}_s=R^I_s$, so that $\underline{R}^{I,b}_s>R^{I,a}_s$ implies that
$R^I_s\leq b$, whence, using \eqref{minodrift} and the definition of $\sigma_a$,
for all $s \in [0,\tau_N)$ such that $\underline{R}^{I,b}_s>R^{I,a}_s$,
$$
-\frac{\chi}{N}\indiq_{\{s<\sigma_a\}}\sum_{i\in I} \sum_{j\notin I}(X^{i,N}_s-\bar{X}^I_s)\cdot K(X^{i,N}_s-X^{j,N}_s)
\leq c \sqrt{R^I_s}=c \sqrt{R^{I,a}_s}<c \sqrt{\underline{R}^{I,b}_s}.
$$
We conclude that a.s., for all $t\in [0,\tau_N)$,
\begin{align}
(\underline{R}^{I,b}_t-R^{I,a}_t)^+\leq& 2\int_0^t \indiq_{\{\underline{R}^{I,b}_s>R^{I,a}_s\}}
\Big(\sqrt{\underline{R}^{I,b}_s}-\sqrt{R^{I,a}_s}\Big)d\beta^I_s.\label{contdif+}
\end{align}
We next introduce $M_t:=\int_0^t \indiq_{\{s<\tau_N,\underline{R}^{I,b}_s>R^{I,a}_s\}}
(\sqrt{\underline{R}^{I,b}_s}-\sqrt{R^{I,a}_s})d\beta^I_s$, which is a true martingale 
(because the integrand is clearly bounded by $\sqrt b$), which is a.s. nonnegative for all times
by \eqref{contdif+} and which starts from $0$: we classically conclude that a.s., 
$M_t$ vanishes for all $t\geq 0$. Coming back to \eqref{contdif+}, we deduce
that $(\underline{R}^{I,b}_t-R^{I,a}_t)^+ \leq 2 M_{t}=0$ a.s. for all $t\in[0,\tau_N)$, which ends the step.

\vip

{\it Step 3.4.} We now conclude the induction. For any $a>0$ and $b=a/3$,
using that $(R^I_t)_{t\in[0,\sigma_a)}=(R^{I,a}_t)_{t\in[0,\sigma_a)}$
and the definition of $\sigma_a$,
\begin{align*}
   \Pro\bigg(\tau_N<\infty,&\inf_{t\in[0,\tau_N)}R^I_t=0\bigg)\leq 
\Pro\Big(\tau_N<\infty,\sigma_a=\tau_N,\inf_{t\in[0,\tau_N)}R^I_t=0\Big)+\Pro\Big(\tau_N<\infty,\sigma_a<\tau_N\Big)\\
&=\Pro\Big(\tau_N<\infty,\sigma_a=\tau_N,\inf_{t\in[0,\tau_N)}R^{I,a}_t=0\Big)
+\Pro\Big(\tau_N<\infty,\min_{j\notin I}\inf_{t\in[0,\tau_N)}R^{I_j}_t \leq a\Big)\\
&\leq \Pro\Big(\tau_N<\infty,\inf_{t\in[0,\tau_N)}\underline{R}^{I,b}_t=0\Big)
+\Pro\Big(\tau_N<\infty,\min_{j\notin I}\inf_{t\in[0,\tau_N)}R^{I_j}_t \leq a\Big).
\end{align*}
Since the continuous process $(\underline{R}^{I,b}_t)_{t\geq 0}$ does not reach $0$, the first 
term in the right-hand side is $0$. We thus can let $a$ tend to $0$ to get
\begin{align*}
\Pro\bigg(\tau_N<\infty,&\inf_{t\in[0,\tau_N)}R^I_t=0\bigg)\leq 
\Pro\Big(\tau_N<\infty,\min_{j\notin I}\inf_{t\in[0,\tau_N)}R^{I_j}_t = 0\Big).
\end{align*}
This last quantity vanishes by our induction assumption.

\vip

{\it Step 4.} To conclude the proof, we still have to check the existence of a solution 
$(\underline{R}^{I,b}_t)_{t\geq 0}$ such $\Pro(\forall t\geq 0,\;\underline{R}^{I,b}_t
\in(0,b])=1$ to \eqref{besref}. For $\ell\geq 1/b$, according to Skorokhod \cite{sk}, existence 
and trajectorial uniqueness hold for the reflected (at $b$) stochastic 
differential equation with Lipschitz drift and diffusion coefficients 
\begin{equation*}
\begin{cases}
\underline{R}^{I,b,\ell}_t=R^I_0\wedge b+2\int_0^t\sqrt{\ell^{-1}\vee\underline{R}^{I,b,\ell}_s}
d\beta^I_s+(|I|-1)\Big(2-\frac{\chi|I|}{4\pi N}\Big)t
-c\int_0^t\sqrt{\ell^{-1}\vee\underline{R}^{I,b,\ell}_s}ds-L^\ell_t\\
\forall t\geq 0,\;\underline{R}^{I,b,\ell}_t\leq b\\
(L^\ell_s)_{s\geq 0}\mbox{ is an adapted increasing process such that }
L^\ell_0=0\mbox{ and }\int_0^t(b-\underline{R}^{I,b,\ell}_s)dL^\ell_s=0.
\end{cases}.   
\end{equation*}
Denoting by $\nu_{\ell}=\inf\{t\geq 0:\underline{R}^{I,b,\ell}_t\leq 1/\ell\}$, 
we deduce from pathwise uniqueness
that for $\ell'\geq \ell$, $(\underline{R}^{I,b,\ell'}_t,L^{\ell'}_t)_{t\in[0,\nu_\ell]}$ and 
$(\underline{R}^{I,b,\ell}_t,L^\ell_t)_{t\in[0,\nu_\ell]}$ coincide and thus that $\ell \mapsto \nu_\ell$ is a.s. increasing.
Setting $\nu_\infty=\sup_{\ell\to\infty}\nu_\ell$, 
we easily deduce the existence of a solution $(\underline{R}^{I,b}_{t},L_t)_{t\in[0,\nu_\infty)}$ to \eqref{besref}
satisfying
$\sup_{t\in[0,\nu_\infty)}\underline{R}^{I,b}_{t}\leq b$
and $\underline{R}^{I,b}_{t}>0$ for all $t\in [0,\nu_\infty)$.
More precisely, $\underline{R}^{I,b}_{t}=\underline{R}^{I,b,\ell}_{t}\geq 1/\ell$ 
for all $\ell$ and all $t\in [0,\nu_\ell)$. It thus only remains to prove that
$\nu_\infty=\infty$ a.s.

\vip

By the Girsanov theorem, 
under the probability measure ${\mathbb Q}$ defined by
$\frac{d\mathbb Q}{d\Pro}|_{\sigma(R^I_0,(\beta^I_s)_{s\in[0,t]})}=\exp(c \beta^I_t / 2- c^2t/8)$ (which is of course a 
true martingale), 
the process $W_t=\beta^I_t-ct/2$ is a one-dimensional Brownian motion. We introduce the equation, satisfied by 
$(\underline{R}^{I,b}_t,L^l_t)$ on the time-interval $[0,\nu_\infty)$,  for a squared Bessel process  
$(\rho_t,\Lambda_t)_{t\geq 0}$ of 
dimension $(|I|-1)(2-\chi|I|/(4\pi N))$ driven by $W$ and 
reflected at the level $b$,
\begin{equation}\label{besqref}
\begin{cases}
\rho_t=R^I_0\wedge b+2\int_0^t\sqrt{\rho_s}dW_s+(|I|-1)\Big(2-\frac{\chi|I|}{4\pi N}\Big)t-\Lambda_t\\
\forall t\geq 0,\;\rho_t\leq b\\
(\Lambda_s)_{s\geq 0}\mbox{ is an adapted increasing process s.t. }\Lambda_0=0\mbox{ and }
\int_0^t(b-\rho_s)d\Lambda_s=0.
\end{cases}.   
\end{equation}
To check global existence for this equation, we set $\underline{\eta}_0=0$ and define, inductively on 
$k\geq 0$, $\rho_t$ to be equal to
\begin{itemize}
\item the squared Bessel process 
$$
R_t=\indiq_{\{k=0\}}R^I_0\wedge b+\indiq_{\{k\ge 1\}}\frac{b}{3}+2\int_{\underline{\eta}_k}^t\sqrt{R_s}dW_s+(|I|-1)
\Big(2-\frac{\chi|I|}{4\pi N}\Big)(t-\underline{\eta}_k)
$$ 
on the time interval $[\underline{\eta}_k,\bar{\eta}_{k+1}]$ where 
$\bar{\eta}_{k+1}=\inf\{t\geq \underline{\eta}_k:R_t\geq 2b/3\}$,
\item the solution to the stochastic differential equation with Lipschitz coefficients 
$$
R^b_t=\frac{2b}{3}+2\int_{\bar{\eta}_{k+1}}^t\sqrt{\frac{b}{3}\vee R^b_s}\,dW_s+(|I|-1)
\Big(2-\frac{\chi|I|}{4\pi N}\Big)(t-\bar{\eta}_{k+1})-\Lambda^b_t,
$$
reflected at $b$  on the time interval $[\bar{\eta}_{k+1},\underline{\eta}_{k+1}]$ where 
$\underline{\eta}_{k+1}=\inf\{t\geq \bar{\eta}_{k+1}:R^b_t\leq b/3\}$.
\end{itemize}
Since, under ${\mathbb Q}$, the delays $(\bar{\eta}_{k+1}-\bar{\eta}_k)_{k\geq 1}$ are i.i.d. and positive, 
${\mathbb Q}$-a.s., $\bar{\eta}_k$ goes to $\infty$ with $k$ by the law of large numbers and $\rho_t$ is 
defined for $t\in[0,+\infty)$. It is easily checked that the process $(\Lambda)_{t\geq 0}$ defined by the 
first equality in \eqref{besqref} also satisfies the last one.

Reasoning like in the comparison between $R^{I,a}$ and $\underline{R}^{I,b}$ performed in Step 3.3, we check 
that the first component of any of two solutions to \eqref{besqref} is above the other one so that the first 
components coincide. 

We deduce that $\underline{R}^{I,b}_{t}$ and $\rho_t$ coincide for $t\in[0,\nu_\infty)$. With the definition of 
$\nu_\infty$ and the continuity of $\rho$, this implies that 
$\{\nu_\infty\leq t\}\subset\{\exists s\in[0,t]:\rho_s=0\}$. Since $(\rho_t)_{t\geq 0}$ always evolves as a 
squared Bessel process of dimension $(|I|-1)(2-\chi|I|/(4\pi N))\geq 2$ under the level 
$b/3$, by \cite[p 442]{reyor}, ${\mathbb Q}(\exists s\in[0,+\infty):\rho_s=0)$. For each 
$t\in[0,\infty)$, we deduce that $0=\Pro(\exists s\in[0,t]:\rho_s=0)\geq \Pro(\nu_\infty\leq t)$ by 
equivalence of $\Pro$ and ${\mathbb Q}$ on $\sigma(R^I_0,(\beta^I_s)_{s\in[0,t]})$. Letting $t\to\infty$, 
we conclude that $\Pro(\nu_\infty<\infty)=0$.
\end{proof}

\section{Positive probability of collisions}\label{posprobcol}

The goal of this section is to establish that in the $N$-particle system, pairs of particles
do collide. The main idea is that for e.g. $I=\{1,2\}$, up to the third term in the right-hand side of 
\eqref{dynrI}, the process $R^I_t$ resembles
a squared Bessel process with dimension $(2-\chi/(2\pi N)) < 2$, which a.s. reaches $0$  
by \cite[page 442]{reyor}.

\begin{proof}[Proof of Proposition \ref{pairco}.]
We thus consider any fixed $N\geq 2$, $\chi>0$, $f_0 \in \cP(\rr^2)$, $t_0>0$ and any 
solution (if it exists) $(X^{i,N}_t)_{i=1,\dots,N,t \in [0,t_0]}$ to \eqref{ps}.
We work by contradiction and assume that a.s., $X^{i,N}_s \neq X^{j,N}_s$ for all $s \in [0,t_0]$ and all $i\ne j$. 
Then the singularity of $K$ is not visited and the particle system \eqref{ps} is classically
strongly well-posed on $[0,t_0]$. Thus for $f_0^{\otimes N}$-a.e.
$(x^1,\dots,x^N) \in (\rr^2)^N$, there is a unique strong solution  $(X^{i,N}_t)_{i=1,\dots,N,t\in [0,t_0]}$
to \eqref{ps} such that a.s., $X^{i,N}_0=x^i$ for all $i$ and $X^{i,N}_s \neq X^{j,N}_s$
for all $s \in [0,t_0]$ and all $i\ne j$.
We fix for the rest of the proof an initial condition $(x^1,\dots,x^N) \in (\rr^2)^N$ enjoying these properties.
All the processes below are defined on the finite time interval $[0,t_0]$.
\vip

{\it Step 1.} By construction, $d=\min_{i\neq j}|x^i-x^j|>0$ and we may of course assume that 
$d=|x^1-x^2|$. We introduce  $\bar{x}:=(x^1+x^2)/2$ and note that
$\min_{3\leq j\leq N}|x^j-\bar{x}|\geq \sqrt{3}d/2$. Fix $1/2< a<b<\sqrt 3/2$ and consider
the stopping time $\tau=\min\{\tau_1,\tau_2,\tau_3\}$, where
\begin{gather*}
\tau_1=\inf\Big\{t\in [0,t_0]\; : \;|X^{1,N}_t-X^{2,N}_t|\geq \frac{2a+1}2d  \Big\}, \\
\tau_2=\inf\Big\{t\in [0,t_0] : \;|X^{1,N}_t+X^{2,N}_t - 2 \bar x| \geq \frac{2a-1}2d  \Big\},\\
\tau_3=\inf\Big\{t\in [0,t_0] : \;\min_{j=3,\dots,N}|X^{j,N}_t-\bar x| \leq bd\Big\},
\end{gather*}
with the convention that $\inf \; \emptyset = t_0$.
We will use that a.s., for all $t\in [0,\tau]$,
$$
\min_{i=1,2,\; j=3,\dots,N} |X^{i,N}_t-X^{j,N}_t| \geq (b-a)d.
$$
Indeed, consider e.g. the case $i=1$ and $j=3$, write 
$|X^{1,N}_t-X^{3,N}_t| \geq  |X^{3,N}_t-\bar x| - |X^{1,N}_t-\bar x|$ and use that
$|X^{3,N}_t-\bar x|\geq bd$ and that $|X^{1,N}_t-\bar x| \leq |X^{1,N}_t-X^{2,N}_t|/2+|X^{1,N}_t+X^{2,N}_t - 2 \bar x|/2
\leq (2a+1)d/4 +(2a-1)d/4=ad$.

\vip

{\it Step 2.} Consider the exponential martingale defined on $[0,t_0]$ by
\begin{align*}
M_t=&\exp\Big[\frac{\chi}{\sqrt{2} N}\sum_{i=1}^N\int_0^{t\wedge\tau}
\Big(\indiq_{\{i\leq 2\}}\sum_{j=3}^N K(X^{j,N}_s-X^{i,N}_s) + \indiq_{\{i\geq 3\}}\sum_{j=1}^2
K(X^{j,N}_s-X^{i,N}_s)\Big)\cdot dB^i_s\\
&\hskip0.8cm-\frac{\chi^2}{4 N^2}\sum_{i=1}^N\int_0^{t\wedge \tau} \Big|
\indiq_{\{i\leq 2\}} \sum_{j=3}^NK(X^{j,N}_s-X^{i,N}_s)+\indiq_{\{i\geq 3\}} \sum_{j=1}^2K(X^{j,N}_s-X^{i,N}_s)\Big|^2ds\Big].
\end{align*}
This is indeed a true martingale, because $K(X^{j,N}_s-X^{i,N}_s)$
is bounded by $(2\pi(b-a)d)^{-1}$ on $[0,\tau]$ for each $i=1,2$ and $j=3,\dots,N$, see Step 1.
Hence $\tilde \Pro := M_{t_0}\cdot \Pro$ is a probability measure equivalent to $\Pro$.
In particular, it also holds that $\tilde \Pro$-a.s., $X^{i,N}_s \neq X^{j,N}_s$
for all $s \in [0,t_0]$ and all $i\ne j$. The Girsanov theorem tells us that, under $\tilde\Pro$, the processes
$$
W^i_t:=B^i_t+\frac{\chi}{\sqrt{2}N}\int_0^{t\wedge\tau}\Big(\indiq_{\{i\leq 2\}}\sum_{j=3}^NK(X^{i,N}_s-X^{j,N}_s)
+\indiq_{\{i\geq 3\}}\sum_{j=1}^2K(X^{i,N}_s-X^{j,N}_s)\Big)ds
$$ 
are independent two-dimensional Brownian motions on $[0,t_0]$. 
We next introduce 
$$
\beta_t=\int_0^t \frac{(X^{1,N}_s-X^{2,N}_s)}{|X^{1,N}_s-X^{2,N}_s|}\cdot d \Big(\frac{W^1_s-W^2_s}{\sqrt 2}\Big)
\quad \hbox{and} \quad \gamma_t= \frac{W^1_t+W^2_t}{\sqrt 2}.
$$ 
It is easily seen, computing brackets and using Karatzas and Shreve 
\cite[Theorem 4.13 p 179]{kashre}, that still under $\tilde\Pro$,
$\beta$ is a one-dimensional Brownian motion on $[0,t_0]$, $\gamma,W^3,\dots,W^N$ are two-dimensional Brownian 
motions on $[0,t_0]$, and all these processes are independent.

\vip

{\it Step 3.} We have 
\begin{align*}
X^{1,N}_t-X^{2,N}_t=&x^1-x^2 + \sqrt 2 (B^1_t-B^2_t) + \frac{2\chi}N \intot K (X^{1,N}_s-X^{2,N}_s) ds\\
&+ \frac \chi N \sum_{j=3}^N \intot \Big( K (X^{1,N}_s-X^{j,N}_s) - K (X^{2,N}_s-X^{j,N}_s) \Big) ds\\
=&x^1-x^2 + \sqrt 2 (W^1_t-W^2_t) + \frac{2\chi}N \intot K (X^{1,N}_s-X^{2,N}_s) ds
\end{align*}
for all $t \in [0,\tau]$.
By the It\^o formula, $Y_t=|X^{1,N}_t-X^{2,N}_t|^2/4$ thus solves, still for $t \in [0,\tau]$,
$$
Y_t:= \frac{d^2}4 + 2\int_0^t\sqrt{Y_s}d\beta_s+\Big(2-\frac{\chi}{2\pi N}\Big)t.
$$
We also have, for all $t \in [0,\tau]$ 
\begin{align*}
X^{1,N}_t+X^{2,N}_t=&2 \bar x  + \sqrt 2 (B^1_t+B^2_t) 
+ \frac \chi N \sum_{j=3}^N \intot \Big( K (X^{1,N}_s-X^{j,N}_s) + K (X^{2,N}_s-X^{j,N}_s) \Big) ds\\
=&2 \bar x+ \sqrt 2 (W^1_t+W^2_t)\\
=& 2 \bar x + 2 \gamma_t,
\end{align*}
and, for all $t\in[0,\tau]$ and all $i=3,\dots,N$ (recall that $K(0)=0$),
\begin{align*}
X^{i,N}_t=&x^i + \sqrt 2 B^i_t + \frac \chi N \intot \sum_{j=1}^N K(X^{i,N}_s-X^{j,N}_s) ds\\
=&x^i + \sqrt 2 W^i_t + \frac \chi N \intot \sum_{j=3}^N K(X^{i,N}_s-X^{j,N}_s) ds.
\end{align*}
We introduce $(\tilde Y_t)_{t\in [0,t_0]}$ the unique strong solution, see \cite[Theorem 3.5 p 390]{reyor}, to 
$$
\tilde Y_t:= \frac{d^2}4  + 2\int_0^t\sqrt{|\tilde Y_s|}d\beta_s+\Big(2-\frac{\chi}{2\pi N}\Big)t.
$$
We clearly have $(Y_t)_{t\in [0,\tau]}=(\tilde Y_t)_{t\in [0,\tau]}$. We next
consider the system 
$$
\tilde{X}^{i,N}_t=x^i+\sqrt{2}W^i_t
+\frac{\chi}{N}\int_0^t\sum_{j=3}^N K(\tilde{X}^{i,N}_s-\tilde{X}^{j,N}_s)ds,\quad i=3,\dots,N,
$$
which classically has a unique strong solution $(\tilde X^{i,N}_t)_{i=3,\dots,N,t\in [0,\sigma)}$
up to $\sigma=\lim_{\ell\to\infty}\inf\{t \in [0,t_0] 
\; : \;\min_{3\leq i<j\leq N}|\tilde X^{i,N}_t-\tilde X^{j,N}_t|\leq 1/\ell\}$ (convention : $\inf\emptyset=t_0$),
which is a.s. positive because the initial conditions $x^3,\dots,x^N$ are pairwise different.
Clearly, $(X^{i,N}_t)_{i=3,\dots,N,t\in [0,\tau\land \sigma)}=(\tilde X^{i,N}_t)_{i=3,\dots,N,t\in [0,\tau\land \sigma)}$.
We conclude this step mentioning that
the processes $(\tilde Y_t)_{t\in [0,t_0]}$, $(\gamma_t)_{t\in [0,t_0]}$ and 
$(\tilde X^{i,N}_t)_{i=3,\dots,N,t\in [0,\sigma)}$ are independent under 
$\tilde \Pro$.

\vip

{\it Step 4.} For any $s_0\in(0,t_0)$, we claim that
$$
\Omega_1 \cap \Omega_2 \cap \Omega_3 \subset \Big\{ \min_{[0,s_0]} |X^{1,N}_s-X^{2,N}_s|=0 \Big\},
$$
where
\begin{gather*}
\Omega_1 =  \Big\{ \min_{[0,s_0]} \tilde Y_s =0,\;\max_{[0,s_0]} \tilde Y_s < \frac{(2a+1)^2d^2}{16} \Big\},
\quad \Omega_2 =  \Big\{ \max_{[0,s_0]} |\gamma_s| < \frac{(2a-1)d}{4} \Big\},\\
\quad \Omega_3 =  \Big\{ \sigma > s_0, \; \min_{s\in[0,s_0],j\geq 3}|\tilde{X}^{j,N}_s-\bar{x}|> b d \Big\}.
\end{gather*}
Indeed, on $\Omega_1$, we have $\max_{[0,s_0]} \tilde Y_s <(2a+1)^2d^2/16$, whence,
since $|X^{1,N}_t-X^{2,N}_t|^2=4\tilde Y_t$ on $[0,\tau]$,
$\max_{[0,s_0\land \tau]} |X^{1,N}_s - X^{2,N}_s| < (2a+1)d/2$ and thus $\tau_1 > s_0\land \tau$.
Since $X^{1,N}_t+X^{2,N}_t=2 \bar x +2 \gamma_t$ on $[0,\tau]$, we deduce that on $\Omega_2$,
$\max_{[0,s_0\land \tau]} |X^{1,N}_s + X^{2,N}_s - 2\bar x| \leq \sup_{[0,s_0\land \tau]}
2|\gamma_s| < (2a-1)d/2$, whence  $\tau_2 > s_0\land \tau$. 
On $\Omega_3$, since $\sigma>s_0$, we have  $(X^{i,N}_t)_{i=3,\dots,N,t\in [0,\tau\land s_0]}
=(\tilde X^{i,N}_t)_{i=3,\dots,N,t\in [0,\tau\land s_0]}$, and thus 
$\min_{s\in[0,s_0 \land \tau ],j\geq 3}|X^{j,N}_s-\bar{x}|> b d$,
so that  $\tau_3 > s_0\land \tau$. As a conclusion, $\tau > s_0 \land \tau$
and thus $\tau>s_0$ on $\Omega_1\cap\Omega_2\cap\Omega_3$. We deduce that 
$\Omega_1 \cap \Omega_2 \cap \Omega_3 \subset \Big\{\tau >s_0,\; \min_{[0,s_0]} \tilde Y_s=0 \Big\}
\subset \Big\{ \min_{[0,s_0]} |X^{1,N}_s-X^{2,N}_s|=0 \Big\}$, because
$\tilde Y_t=|X^{1,N}_t-X^{2,N}_t|^2/4$ for all $t\in [0,\tau]$.

\vip

{\it Step 5.} Here we show that we can find $s_0 \in (0,t_0)$ such that $\tilde
\Pro(\Omega_1 \cap \Omega_2 \cap \Omega_3)>0$. As seen at the end of Step 3, the events
$\Omega_1,\Omega_2$ and $\Omega_3$ are independent (under $\tilde\Pro$). 
It obviously holds true that $\tilde\Pro(\Omega_2)>0$ (for any $s_0>0$) and that 
$\tilde\Pro(\Omega_3)>0$ if $s_0>0$ is small enough because $\sigma>0$ a.s. and 
by continuity of the sample-paths (at time $0$, we have $\min_{j\geq 3}|\tilde{X}^{j,N}_0-\bar{x}|
=\min_{j\geq 3}|x^j-\bar{x}| \geq \sqrt 3 d /2 > b d$).
It thus only remains to verify that $\tilde\Pro(\Omega_1)$ for all $s_0\in (0,t_0)$.
Since, by the comparison principle stated in \cite[Theorem 3.7 p 394]{reyor}, $\tilde\Pro(\Omega_1)$ 
is non-decreasing with $\chi$, it is enough to check that
$\tilde\Pro(\Omega_1)>0$ for all $s_0\in (0,t_0)$ when $\chi<4\pi N$, which we now do.

\vip

It holds that $\tilde Y$ is a squared Bessel process of dimension $\delta:=2-\chi/(2\pi N)$ started at
$y=d^2/4$ and restricted to the time-interval $[0,t_0]$. 
We set $z=(2a+1)^2d^2/16$ and observe that $z>y$. For $x\geq 0$, we also introduce
$\tau_x = \inf\{t\in [0,t_0] \;:\; \tilde Y_t=x\}$. Then $\Omega_1=\{\tau_0 < s_0 \land \tau_z\}$.

\vip

For $x\geq 0$, we denote by $Q_x$ the law of the squared Bessel process of dimension $\delta$ starting from $x$
(on the whole time interval $[0,\infty)$),
and by $q_s(x,u)$ the density of its marginal at time $s>0$, which is a positive function of $u$ on 
$(0,+\infty)$ according to \cite[Corollary 4.1 p441]{reyor}. 
For all $u \ne v$, we define $\tau_u$ as the first passage time at $u$ and 
$\tau_{uv}$ as the first passage time at $v$ after $\tau_u$.
 It holds that $\tilde\Pro(\Omega_1)
=Q_y(\tau_0 < s_0 \land \tau_z)$ and what we have to check is that 
$Q_y(\tau_0 < s_0 \land \tau_z)>0$ for all $s_0\in (0,t_0)$.

\vip
We first show that $Q_x(\tau_0<t)>0$ for all $t>0$ and all $x>0$. Since $\delta<2$, we know from 
\cite[page 442]{reyor} that $Q_x(\tau_0<\infty)=1$ for all $x>0$. With the Markov property, we deduce that
$$
1=\sum_{n\geq 0}Q_x(\tau_0\in (nt/2,(n+1)t/2])\leq Q_x(\tau_0\leq t/2)+\int_0^{+\infty}Q_u(\tau_0\leq t/2)
\bigg(\sum_{n\geq 1}q_{nt/2}(x,u)\bigg)du.
$$
Since, $u\mapsto q_{t/2}(x,u)$ is positive on $(0,+\infty)$, this ensures the positivity of 
$$
Q_x(\tau_0\leq t/2)+1_{\{Q_x(\tau_0\leq t/2)=0\}}\int_0^{+\infty}Q_u(\tau_0\leq t/2)q_{t/2}(x,u)du\leq Q_x(\tau_0\leq t).
$$

\vip

Using the strong Markov property, that $0<y<z$ and the monotonicity of $t\mapsto Q_y(\tau_0\leq t)$,
$$
Q_y(\tau_{z}<\tau_0\leq t)=Q_y(\tau_{zy}<\tau_0\leq t)=\!\int\!\indiq_{\{\tau_{zy}<t\}}Q_y(\tau_0\leq t-s)|_{s=\tau_{zy}}dQ_y
\leq Q_y(\tau_{zy}<t)Q_y(\tau_0\leq t).
$$
By continuity of the sample-paths, $\lim_{s\to 0}Q_y(\tau_{zy}<s)=0$ and we can find $s_1 \in (0,t_0)$ so that 
for all $s_0\in(0,s_1]$, $Q_y(\tau_{zy}<s_0)<1$. We conclude that for all $s_0\in(0,s_1]$,
$$
Q_y(\tau_0\leq s_0\wedge\tau_z)=Q_y(\tau_0\leq s_0)-Q_y(\tau_{z}<\tau_0\leq s_0)
\geq(1-Q_y(\tau_{zy}<s_0))Q_y(\tau_0\leq s_0)>0.
$$
If now $s_0\in[s_1,t_0]$, we obviously have $Q_y(\tau_0\leq s_0\wedge\tau_z) \geq Q_y(\tau_0\leq s_1\wedge\tau_z)>0$.
This ends the step.

\vip

{\it Step 6.} We deduce from Steps 4 and 5 that
$\tilde \Pro (\min_{[0,t_0]} |X^{1,N}_s-X^{2,N}_s|=0)>0$. But $\Pro$ and $\tilde \Pro$ being equivalent,
this implies that $\Pro (\min_{[0,t_0]} |X^{1,N}_s-X^{2,N}_s|=0)>0$, whence a contradiction.
\end{proof}

\section{Two particles system}\label{222}

In this section we consider the particle system \eqref{ps} with $N=2$. 
Assuming that $(X^1_t,X^2_t)_{t\geq 0}$ solves \eqref{ps} with $N=2$, we easily find
that $S_t=X^1_t+X^2_t$ and $D_t=X^1_t-X^2_t$ solve two autonomous equations, namely
$S_t=S_0+2B_t$ and 
\begin{equation}\label{eqd}
D_t=D_0+2W_t+\chi\intot K(D_s)ds, 
\end{equation}
with the two independent $2$-dimensional Brownian motions $B_t=(B^1_t+B^2_t)/\sqrt 2$ and 
$W_t=(B^1_t-B^2_t)/\sqrt 2$. The equation satisfied by $(S_t)_{t\geq 0}$ being trivial, only the study of
\eqref{eqd} is interesting.
During the whole section, the initial condition $D_0$ is only assumed to be a $\rd$-random variable
indpendent of $(W_t)_{t\geq 0}$.

\begin{rk}\label{rkpds}
Theorem \ref{pse} ensures us existence for \eqref{eqd} when $\chi<4\pi$ and $D_0$ is the difference of 
two i.i.d. integrable random vectors.
When $\chi\geq 4\pi$, the equation \eqref{eqd} has no global (in time) solution in the usual sense.
More precisely, assume that it has a global solution $(D_t)_{t\geq 0}$. Then 
$\tau=\inf\{t\geq 0\;:\;D_t=0\}$ is a.s. finite and a.s., $\int_\tau^{\tau+h} |K(D_s)|ds=\infty$ for all $h>0$.
\end{rk}

\begin{proof}
Let thus $\chi\geq 4\pi$ and assume that there is a global solution $(D_t)_{t\geq 0}$ to \eqref{eqd}.
By a direct application of the It\^o formula, this implies that $R_t=|D_t|^2/4$ 
solves $R_t=R_0+2\intot \sqrt{|R_s|}d\beta_s + (2-\chi/(4\pi))t$, where 
$\beta_t=\intot \indiq_{\{D_s\ne 0\}} |D_s|^{-1}D_s\cdot dW_s+\intot \indiq_{\{D_s = 0\}}d\tilde{\beta}_s$ 
is a $1$-dimensional Brownian motion (here $\tilde{\beta}$ is any one-dimensional Brownian 
motion independent of $(D_0,W)$).
According to \cite[p 442]{reyor} combined, when $\chi>8\pi$, with the comparison theorem
\cite[Theorem 3.7 p 394]{reyor}, $\tau=\inf\{t\geq 0\;:\;R_t=0\}$ is a.s. finite.
By the strong Markov property (for the process $R$), the comparison theorem
\cite[Theorem 3.7 p 394]{reyor} and since $\chi\ge 4\pi$, $(R_{\tau+t})_{t\geq 0}$ can be bounded from above by 
a squared $1$-dimensional Bessel process starting from $0$, process with the same law as $(|\beta_t|^2)_{t\geq 0}$. 
For $h>0$, by the occupation times formula
\cite[Corollary 1.6 p 224]{reyor}, $\int_0^h |\beta_s|^{-1}ds=\int_{\rr}|a|^{-1} L^a_h da$.
But $L^0_h>0$ as soon as $h>0$ and we know from \cite[Corollary 1.8 p 226]{reyor}
that $a \mapsto L^a_h$ is a.s. continuous, so that $\int_0^h |\beta_s|^{-1}ds=\infty$ for all $h>0$ a.s.
Thus $4\pi \int_\tau^{\tau+h} |K(D_s)|ds= 
\int_\tau^{\tau+h} R_s^{-1/2}ds =\infty$ for all 
$h>0$ a.s.
\end{proof}

Hence \eqref{eqd} has no global solution for $\chi\geq 4\pi$, while we expect that in some 
sense, the dynamics it represents is meaningful at least for all $\chi \in (0,8\pi)$.
We thus would like to refomulate it,
in such a way that it is possible to build global solutions. More precisely, we would
like to identify, for any value of $\chi > 0$, the limit, as $\e>0$, of the smoothed equation
\begin{equation}\label{eqde}
D^\e_t=D_0+2W_t+\chi \int_0^tK_\e(D^\e_s)ds,
\end{equation}
where $K_\e$ was defined in \eqref{ke}. The regularized drift coefficient $K_\e$ being Lipschitz, existence and 
trajectorial uniqueness hold for this SDE.
We introduce the equation formally satisfied by $Z_t=|D_t|^2D_t$ for $(D_t)_{t\geq 0}$ solution to \eqref{eqd}:
\begin{equation}\label{eqz}
Z_t=Z_0+\intot \sigma(Z_s)dW_s + \intot b(Z_s)ds,
\end{equation}
where $\sigma(z)=2|z|^{-4/3}(|z|^2 I_2 + 2z z^*)$ and $b(z)=(16 -3\chi/(2\pi))|z|^{-2/3}z $.
Here and below, $I_2$ is the identity matrix and $z^*$ is the transpose of $z$.
Here is the main result of this section.

\begin{thm}\label{thN2}
Set $Z_0=|D_0|^2D_0$.

\vip

(i) If $\chi \in (0,8\pi)$,  \eqref{eqz} has a unique (in law) solution $(Z_t)_{t\geq 0}$
such a.s., $\int_0^\infty\indiq_{\{Z_t=0\}}dt=0$. Moreover, if $\chi \in(0,4\pi)$, 
\eqref{eqd} has a unique (in law) solution.
\vip

(ii) If $\chi \geq 8\pi$, \eqref{eqz} has a pathwise unique solution frozen when it reaches $0$
(and it a.s. reaches $0$).

\vip

(iii) In any case, the solution $(D^\e_t)_{t\geq 0}$ to \eqref{eqde} goes in law, as $\e\to 0$,
to $(D_t)_{t\geq 0}$ defined by $D_t=|Z_t|^{-2/3}Z_t\indiq_{\{Z_t\ne 0\}}$ and, when $\chi\in (0,4\pi)$, this 
process $(D_t)_{t\geq 0}$ solves \eqref{eqd} .
\end{thm}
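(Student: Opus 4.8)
The plan is to reduce everything to well-understood facts about squared Bessel processes and to exploit the change of variables $z\mapsto |z|^{-2/3}z$, which is a homeomorphism of $\rd$ sending $|D|^2D$ back to $D$. First I would establish, via the It\^o formula, that if $(D^\e_t)_{t\geq 0}$ solves \eqref{eqde} then $R^\e_t:=|D^\e_t|^2/4$ solves $R^\e_t=R_0+2\intot \sqrt{R^\e_s}\,d\beta^\e_s+\intot(2-\chi/(4\pi)+\rho_\e(R^\e_s))\,ds$ for a one-dimensional Brownian motion $\beta^\e$ and a small correction $\rho_\e$ coming from the $\e^2$ in $K_\e$; here one must be careful that $\beta^\e$ can genuinely be built as a one-dimensional Brownian motion (using L\'evy's characterisation, since $D^\e$ never vanishes, $K_\e$ being bounded). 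Since $\rho_\e\to 0$ as $\e\to 0$, a comparison/stability argument for squared-Bessel-type SDEs (via \cite[Theorem 3.7 p 394]{reyor} and \cite[Theorem 3.5 p 390]{reyor}) gives that $(R^\e_t)_{t\geq 0}$ converges in law to the squared Bessel process $(R_t)_{t\geq 0}$ of dimension $\delta=2-\chi/(4\pi)$ started at $R_0$, and that the driving Brownian motions converge jointly. Tightness of $(D^\e)$ in $C([0,\infty),\rd)$ should follow from tightness of $R^\e=|D^\e|^2/4$ together with the fact that the ``angular part'' of $D^\e$ is controlled: away from $0$, $D^\e$ is a time-changed Brownian motion on the sphere, and near $0$ the increments of $D^\e$ are controlled because $D^\e$ itself is small.

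Next I would identify the limit. Take any subsequential weak limit $(D_t,W_t)$ of $(D^\e_t,W_t)$; by Skorokhod representation we may assume a.s. convergence. The uniform bound $\int_0^T|D^\e_s|^{-1}ds$ is \emph{not} available, so instead I would pass to the limit in the equation for $Z^\e_t:=|D^\e_t|^2D^\e_t$: a direct It\^o computation shows $Z^\e$ satisfies \eqref{eqz} with $\sigma,b$ replaced by $\e$-perturbed versions $\sigma_\e,b_\e$ which are \emph{continuous and bounded on compacts} and converge locally uniformly to $\sigma,b$ (the point being that $|z|^{-2/3}z$ and $|z|^{-4/3}(|z|^2I_2+2zz^*)$ extend continuously through $0$, with value $0$, so no singularity survives in the $Z$-variable). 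Hence the limit $Z_t=|D_t|^2D_t$ solves \eqref{eqz}. Moreover $\int_0^\infty \indiq_{\{Z_t=0\}}dt=\int_0^\infty\indiq_{\{R_t=0\}}dt$, and when $\delta>0$ a squared Bessel process spends zero time at $0$ (\cite[p 442]{reyor} plus the occupation-times formula as in the proof of Remark \ref{rkpds}), which gives the extra property in (i) when $\chi<8\pi$; when $\chi\geq 8\pi$, $\delta\leq 0$ so $R$ is absorbed at $0$ and $Z$ is frozen there, giving (ii). Uniqueness in law for \eqref{eqz} in case (i) comes from the fact that the radial part $|Z_t|^{2/3}=|D_t|^2=4R_t$ has a unique law (squared Bessel of dimension $\delta$, which is the pathwise unique solution of its defining SDE), and on the time set $\{R_t>0\}$ the SDE \eqref{eqz} is locally Lipschitz so the angular part is then determined; the zero-time-at-$0$ condition rules out pathological excursions. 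In case (ii), $\sigma,b$ are continuous and $Z$ is absorbed at $0$, and pathwise uniqueness follows from local Lipschitzness away from $0$ together with absorption.

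Finally, to obtain (iii) I would argue that $D^\e_t=|Z^\e_t|^{-2/3}Z^\e_t\indiq_{\{Z^\e_t\neq 0\}}$ (trivially, since $D^\e$ never vanishes this is just an identity), that $Z^\e\to Z$ in law in $C([0,\infty),\rd)$ by the above, and that the map $z\mapsto |z|^{-2/3}z\indiq_{\{z\neq 0\}}$ is continuous on $\rd$ (its only possible bad point is $0$, where $|{|z|^{-2/3}z}|=|z|^{1/3}\to 0$), hence by the continuous mapping theorem $D^\e\to D:=|Z|^{-2/3}Z\indiq_{\{Z\neq 0\}}$ in law. Uniqueness in law of this limit follows from uniqueness in law of $Z$ (cases (i)-(ii)). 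It remains to check that, when $\chi\in(0,4\pi)$ (equivalently $\delta>1$), the process $D$ actually solves \eqref{eqd} in the classical sense, i.e.\ that $\int_0^t|K(D_s)|ds=\tfrac1{8\pi}\int_0^t R_s^{-1/2}ds<\infty$ a.s.; this is exactly the statement that a Bessel process of dimension $>1$ has locally integrable reciprocal, which again follows from \cite[p 442]{reyor} / the occupation-times formula. Given this integrability, one passes to the limit in \eqref{eqde} using $K_\e\to K$ and $D^\e\to D$ (with the bound \eqref{ar} controlling the error as in the proof of Theorem \ref{pse}, Step 3), recovering \eqref{eqd}; in particular this also yields existence for \eqref{eqd} when $\chi<4\pi$ and, combined with the fact that any solution of \eqref{eqd} produces via $Z=|D|^2D$ a solution of \eqref{eqz} with the zero-time property, uniqueness in law for \eqref{eqd}.

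\medskip

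The main obstacle, I expect, is the passage to the limit and uniqueness at the level of \eqref{eqz} when $\chi\in(0,8\pi)$: one must justify carefully that the coefficients $\sigma_\e,b_\e$ are well-behaved uniformly near $0$ (so that tightness and identification of the limit go through despite $D^\e$ itself approaching the singularity), and that the side condition $\int_0^\infty\indiq_{\{Z_t=0\}}dt=0$ is exactly what is needed to pin down the law — without it, \eqref{eqz} would admit spurious ``delayed'' solutions sitting at $0$. Handling the reflection/excursion structure of $Z$ at the origin, and checking that the angular part is uniquely reconstructed from the Brownian driver on each excursion interval, is the delicate point; everything else is a matter of invoking the Bessel facts from \cite{reyor} and routine It\^o calculus.
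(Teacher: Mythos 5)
Your outline of the regularized limit (Lemma \ref{tropfacile}-type arguments) and of point (iii) is broadly in the spirit of the paper, but there is a genuine gap exactly at the heart of point (i): uniqueness in law for \eqref{eqz} when $\chi\in(0,8\pi)$. You claim that since the radial part $R_t=|Z_t|^{2/3}/4$ is a squared Bessel process of dimension $\delta=2-\chi/(4\pi)$, and since \eqref{eqz} is locally Lipschitz on $\{R_t>0\}$, ``the angular part is then determined'', the zero-time condition ruling out pathologies. This does not work: for $\delta\in(0,2)$ the radial part hits $0$ and reflects infinitely often, so $Z$ has infinitely many excursions away from the origin, each starting at $Z=0$ where no angle is defined. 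Local Lipschitzness away from $0$ only propagates the angle \emph{within} an excursion from its (undefined) starting value; it cannot determine the direction in which each excursion leaves the origin. Indeed pathwise uniqueness genuinely fails here, and the angular part is \emph{not} a measurable functional of $(R,W)$; what must be proved is that its conditional \emph{law} given the radial part is pinned down. This is precisely what the paper's Lemma \ref{superlem} and Lemma \ref{wu} do: writing $Z_t=(4R_t)^{3/2}e^{iT_t}$, on each excursion one has $T_t=h_{2\pi}\big(T_u+\int_u^t R_s^{-1/2}d\gamma_s\big)$ with $\gamma$ a Brownian motion independent of $R$, and because $\int_{s_0}^{t}R_s^{-1}ds=\infty$ at the left endpoint $s_0$ of every excursion (a consequence of Khoshnevisan's exact modulus of continuity for Bessel local behaviour, Step 4 of Lemma \ref{wu}), the angle modulo $2\pi$ is forced to be uniformly distributed on $[0,2\pi)$ and independent of the past; this identifies the law of $Z$ excursion by excursion. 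You explicitly flag this as ``the delicate point'' but supply no argument for it, and the mechanism you propose (local Lipschitzness plus the zero-time condition) cannot produce it, so point (i) is not proved.

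Two secondary issues, repairable but not automatic: (a) to show that \emph{any} solution of \eqref{eqz} with the zero-time property has $R_t=|Z_t|^{2/3}/4$ solving the squared Bessel SDE, you cannot apply It\^o directly to $|z|^{2/3}$ at $0$; the paper regularizes with $(|z|^2+\eta)^{1/3}$ and uses the zero-time condition to pass to the limit (Step 1 of Lemma \ref{wu}), and a similar regularization is needed to check that $D_t=|Z_t|^{-2/3}Z_t\indiq_{\{Z_t\ne 0\}}$ solves \eqref{eqd} when $\chi<4\pi$ (together with $\int_0^t|Z_s|^{-1/3}ds<\infty$, which uses dimension $>1$); (b) for $\chi\geq 8\pi$, the convergence of $R^\e$ to the \emph{frozen} squared Bessel process is not a pure comparison statement, since comparison only gives $R^\e\geq R$; the paper needs the quantitative Yamada-function estimate of Lemma \ref{tropfacile}-Step 5 to show that $R^\e$ stays small after the limiting process is absorbed. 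These can be filled in along your lines, but the identification of the conditional angular law is the missing idea.
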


In point (i), uniqueness in law cannot hold true without restriction for \eqref{eqz}:
the time passed at $0$ by the solution that we consider is Lebesgue-nul, while it is easy
to build a solution by freezing the process when it reaches $0$.

\vip

The rest of the section is devoted to the proof of this theorem.
The following lemma is more or less standard.

\begin{lem}\label{tropfacile}
Let $\chi>0$ be fixed.
For each $\e\in (0,1)$, we consider the unique solution $(D^\e_t)_{t\geq 0}$ to \eqref{eqde}
and we put $Z^\e_t=|D^\e_t|^2D^\e_t$. 

\vip

(i) The family $\{(Z^\e_t)_{t\geq 0},\e \in (0,1)\}$ is tight in $C([0,\infty),\rd)$.

\vip

(ii) Any limit point $(Z_t)_{t\geq 0}$ is a weak solution to \eqref{eqz} and, setting $R_t=|Z_t|^{2/3}/4$,
it holds that

(a) if $\chi \in (0,8\pi)$, then $(R_t)_{t\geq 0}$ is a $(2-\chi/(4\pi))$-dimensional squared Bessel process;

(b) if $\chi \geq 8\pi$, then $(R_t)_{t\geq 0}$ is a $(2-\chi/(4\pi))$-dimensional squared Bessel process
frozen when it reaches $0$.
\end{lem}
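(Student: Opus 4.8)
The plan is to prove Lemma~\ref{tropfacile} in two stages: first establishing the tightness (i), then identifying the limit points (ii), using the fact that $R_t=|Z_t|^{2/3}/4 = |D_t^\e|^2/4$ is exactly (for the prelimit processes $R^\e_t := |D^\e_t|^2/4$) a nice one-dimensional diffusion. The key preliminary observation is that by the It\^o formula applied to \eqref{eqde} with $\phi(x)=|x|^2/4$, the process $R^\e_t$ solves
\begin{equation*}
R^\e_t = R_0 + \int_0^t \sqrt{R^\e_s}\, d\beta^\e_s + \Big(2 - \frac{\chi}{2\pi}\frac{|D^\e_s|^2}{|D^\e_s|^2+\e^2}\Big)\,\text{(integrand in }ds\text{)},
\end{equation*}
where I would be sloppy to write it that way in a display; more precisely $dR^\e_t = \sqrt{R^\e_t}\,d\beta^\e_t + (2 - \tfrac{\chi}{2\pi}\cdot\tfrac{4R^\e_t}{4R^\e_t+\e^2})\,dt$ with $\beta^\e_t = \int_0^t \indiq_{\{D^\e_s\ne 0\}}|D^\e_s|^{-1} D^\e_s\cdot dW_s + \int_0^t \indiq_{\{D^\e_s=0\}}d\tilde\beta_s$ a one-dimensional Brownian motion. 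Since the drift of $R^\e$ is bounded (by $2$) and the diffusion coefficient $\sqrt{R^\e_t}$ grows sublinearly, standard moment estimates give $\sup_\e \E[\sup_{[0,T]} R^\e_t] < \infty$ and a Kolmogorov-type modulus of continuity, hence $\{(R^\e_t)_{t\ge0}\}$ is tight in $C([0,\infty),\rr)$. Because $Z^\e_t = |D^\e_t|^2 D^\e_t$ with $|Z^\e_t| = |D^\e_t|^3 = (4R^\e_t)^{3/2}$, and $Z^\e$ satisfies \eqref{eqz} with $K$ replaced by $K_\e$ — equivalently $dZ^\e_t = \sigma(Z^\e_t)dW_t + b_\e(Z^\e_t)dt$ for a suitably regularized $b_\e$ with $|\sigma(z)|\lesssim |z|^{1/3}$ and $|b_\e(z)|\lesssim |z|^{1/3}$ — the same argument applied directly to $Z^\e$ (or: tightness of $R^\e$ plus control of the angular part via the martingale problem) yields tightness of $\{(Z^\e_t)_{t\ge0}\}$. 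This proves (i).

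For (ii), I would use the martingale problem / Skorokhod representation route. Pass to a subsequence along which $(Z^\e_t)_{t\ge0}$ converges in law to some $(Z_t)_{t\ge0}$, realized almost surely on a common space. The coefficients $\sigma$ and $b$ are continuous on $\rd$ (note $\sigma(z), b(z) \to 0$ as $z\to 0$ since they vanish like $|z|^{1/3}$ and $|z|^{2/3}$ respectively at the origin, so there is no singularity issue here — the singularity of \eqref{eqd} has been absorbed into the degeneracy of \eqref{eqz}), and $b_\e \to b$ locally uniformly; moreover $\sigma$ is bounded on compacts and the prelimit processes have uniformly bounded moments, so one can pass to the limit in the martingale problem associated with \eqref{eqz}, i.e.\ for $\phi\in C^2_b(\rd)$, $\phi(Z_t) - \phi(Z_0) - \int_0^t \mathcal L\phi(Z_s)\,ds$ is a martingale, where $\mathcal L$ is the generator with coefficients $\sigma,b$. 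This identifies $(Z_t)_{t\ge0}$ as a weak solution to \eqref{eqz}. The identification of the radial part is then immediate: applying It\^o to $R_t := |Z_t|^{2/3}/4$ (equivalently, passing to the limit in the equation for $R^\e_t$ above), and using that the term $\tfrac{\chi}{2\pi}\cdot\tfrac{4R^\e_t}{4R^\e_t+\e^2} \to \tfrac{\chi}{2\pi}\indiq_{\{R_t>0\}}$ in the limit, one gets
\begin{equation*}
R_t = R_0 + \int_0^t \sqrt{R_s}\,d\beta_s + \Big(2 - \frac{\chi}{4\pi}\Big)\int_0^t \indiq_{\{R_s>0\}}ds = R_0 + \int_0^t \sqrt{R_s}\,d\beta_s + \Big(2-\frac{\chi}{4\pi}\Big)t - \frac{\chi}{4\pi}\int_0^t \indiq_{\{R_s=0\}}ds.
\end{equation*}
When $\chi<8\pi$ the dimension $\delta = 2-\chi/(4\pi) > 0$, and a squared Bessel process of dimension $\delta\in(0,2)$ spends zero time at $0$ (by the occupation-times formula, exactly as in the proof of Remark~\ref{rkpds}), so $\int_0^t\indiq_{\{R_s=0\}}ds = 0$ and $(R_t)$ is a genuine $\delta$-dimensional squared Bessel process, giving (a). When $\chi\ge 8\pi$, $\delta \le 0$; here one must show the limiting $R$ is frozen at $0$ once it hits $0$ — this follows because $R^\e$ is dominated (via a comparison argument, using that $\tfrac{4R^\e}{4R^\e+\e^2}\ge 0$ so the drift of $R^\e$ is $\le 2 - \tfrac{\chi}{2\pi}\cdot 0 = 2$ but more usefully $\le$ that of a $\delta'$-dimensional squared Bessel for any $\delta'\ge\delta$ once $R^\e$ is small, hence cannot escape $0$ in the limit) — together with the fact that a.s.\ $R$ reaches $0$, yielding (b).

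The main obstacle I anticipate is the passage to the limit in the drift term near the origin for case (ii)(b), and more generally making rigorous the claim that $\tfrac{\chi}{2\pi}\cdot\tfrac{4R^\e_s}{4R^\e_s+\e^2}$ converges to $\tfrac{\chi}{2\pi}\indiq_{\{R_s>0\}}$ inside the time integral when the limit process $R$ may touch $0$: pointwise convergence fails precisely on $\{R_s=0\}$, so one needs to control the occupation measure of small neighbourhoods of $0$ uniformly in $\e$. The clean way around this is to work with \eqref{eqz} rather than with $R$ directly — since $\sigma$ and $b$ are genuinely continuous (including at $0$), the martingale-problem limit is routine there — and only afterwards read off the equation for $R = |Z|^{2/3}/4$, at which point one invokes the known Bessel trichotomy from \cite[p.~442]{reyor} and the comparison theorem \cite[Theorem 3.7 p 394]{reyor} to pin down the behaviour at $0$. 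I would also need the elementary facts that $|Z^\e_t|^{2/3} = 4R^\e_t$ and that convergence of $Z^\e$ in $C([0,\infty),\rd)$ forces convergence of $R^\e = |Z^\e|^{2/3}/4$, which is immediate by continuity of $z\mapsto |z|^{2/3}$.
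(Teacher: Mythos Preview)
Your approach to (i) and to the first assertion of (ii) --- that any limit $(Z_t)_{t\ge 0}$ weakly solves \eqref{eqz} --- is essentially the same as the paper's (Steps 1--3 there), modulo some computational slips: the diffusion coefficient of $R^\e$ is $2\sqrt{R^\e_t}$, not $\sqrt{R^\e_t}$, and the drift is $2-\frac{\chi}{4\pi}\cdot\frac{4R^\e_t}{4R^\e_t+\e^2}$, not with $\chi/(2\pi)$; also in your displayed limiting equation for $R$, only the $\chi$-term acquires the indicator, so the two expressions you equate are not equal.

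The genuine gap is in the identification of $(R_t)_{t\ge 0}$, i.e.\ in (ii)(a) and (ii)(b). Both of your proposed routes are incomplete. The first --- passing to the limit in the drift $\frac{4R^\e_s}{4R^\e_s+\e^2}\to\indiq_{\{R_s>0\}}$ --- you yourself flag as problematic, and indeed there is no uniform control of the occupation of small neighbourhoods of $0$ available at this stage. Your circularity in (a) is then explicit: you use that a squared Bessel of positive dimension spends zero time at $0$ to drop the indicator and conclude that $R$ \emph{is} such a Bessel. The second route --- ``read off the equation for $R=|Z|^{2/3}/4$ from \eqref{eqz}'' --- fails as stated because $z\mapsto|z|^{2/3}$ is not $C^2$ at the origin, so It\^o's formula does not apply; the paper does exactly this computation later (proof of Lemma \ref{wu}, Step 1) by regularising as $(|Z_t|^2+\eta)^{1/3}$, but the passage $\eta\to 0$ there crucially uses the \emph{a priori} knowledge that $\int_0^t\indiq_{\{Z_s=0\}}ds=0$, which is precisely what you are trying to establish. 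For (b) your comparison sketch is too vague to constitute an argument: the issue is not that $R$ reaches $0$, but that it cannot escape once there, and bounding the drift of $R^\e$ above by $2$ says nothing about this.

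The paper's route for (ii)(a)--(b) is substantially different and more hands-on: it proves directly that $(R^\e_t)_{t\ge 0}$ converges in probability, uniformly on compacts, to the explicit target process (the genuine squared Bessel when $\chi<8\pi$, the frozen one when $\chi\ge 8\pi$), by coupling $R^\e$ with the target $R$ driven by the \emph{same} Brownian motion $\beta^\e$, using the comparison theorem to get $R^\e_t\ge R_t$, and then controlling $R^\e_t-R_t$ via Yamada functions $\varphi_\e$ (so that $\varphi_\e''(x)x\lesssim 1/\log(1/\e)$ kills the quadratic-variation term). In the frozen case this is supplemented by a localisation at the hitting time of $1/k$ and a further comparison with an auxiliary process $S^{r,\e}$ started from small $r$. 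None of this machinery appears in your proposal; it is the real content of the lemma.
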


\begin{proof} We divide the proof in several steps.

\vip

{\it Step 1.} 
Direct applications of the It\^o formula show that
$$
Z^\e_t =Z_0+\intot \sigma(Z^\e_s)dW_s + \intot b_\e(Z^\e_s)ds,
$$
where $b_\e(z)=16|z|^{-2/3}z - (3\chi/(2\pi))(|z|^{2/3}+\e^2)^{-1}z$ and that $R^\e_t:=|D^\e_t|^2/4$ solves 
$$
R^\e_t = R_0 + 2 \intot \sqrt{R^\e_s} d\beta^\e_s+ \intot \Big(2- \frac{\chi R^\e_s}{\pi(\e^2+4 R^\e_s)}\Big)ds,
$$
where $\beta^\e_t =\intot \indiq_{\{D^\e_s\neq 0\}}|D^\e_s|^{-1}D^\e_s\cdot dW_s$. 
Since $\sup_{r\ge 0} (\chi r)/[2\pi\sqrt r(\e^2+4r)]=\chi/(8\pi\e)$, the Girsanov theorem ensures us
that for all $T\in(0,+\infty)$, the law of $(R^\e_t)_{t\in[0,T]}$ is equivalent to the law of the restriction 
to the time interval $[0,T]$ of a $2$-dimensional squared Bessel process starting from $R_0$. 
By \cite[p 442]{reyor}, we deduce that a.s., for all $t>0$, $R^\e_t>0$. As a consequence 
$(\beta^\e_t)_{t\ge 0}$ is a one-dimensional Brownian motion.
\vip

{\it Step 2.} By trajectorial uniqueness for \eqref{eqde}, for $M>0$, on the event 
$\{|D_0|\le M\}$, the solution starting from $D_0$ coincides 
with the one starting from $D_0\indiq_{\{|D_0|\le M\}}$. Therefore, by both 
implications in the Prokhorov theorem, to check that the family $\{(Z^\e_t)_{t\geq 0},\e\in(0,1)\}$ is tight in 
$C([0,\infty),\rd)$ it is enough to do so when $D_0$ is bounded. The tightness property then easily follows from 
the Kolmogorov criterion, using that $\sup_{\e \in (0,1)} |b_\e(z)|$ and $|\sigma(z)|$ both have at most 
affine growth: one classically verifies successively that for all 
$\rho\geq 2$ and all $T>0$ there is $C_{T,\rho}$ such that for all $\e \in (0,1)$, 
$\sup_{[0,T]} \E[|Z_t^\e|^\rho]\leq C_{T,\rho}$ and $\E[|Z_t^\e-Z_s^\e|^\rho|]\leq C_{T,\rho} |t-s|^{\rho/2}$ for all
$0\leq s \leq t \leq T$.

\vip

{\it Step 3.} Using martingale problems, that $b$ and $\sigma$ are continuous and that $b_\e$ converges (uniformly)
to $b$, it is checked without difficulty that any limit point $(Z_t)_{t\geq 0}$ (as $\e\to 0$) of
the family $\{(Z^\e_t)_{t\geq 0},\e>0\}$ is indead a (weak) solution to \eqref{eqz}.

\vip

{\it Step 4.} Here we assume that $\chi \in (0,8\pi)$ and we prove that $(R_t^\e)_{t\geq 0}$ goes in law to 
the squared $(2-\chi/(4\pi))$-dimensional Bessel process. We consider the 
$(2-\chi/(4\pi))$-dimensional Bessel process $(R_t)_{t\geq 0}$ associated to $(\beta^\e_t)_{t\geq 0}$, that is
$R_t=R_0+2\intot \sqrt{R_s}d\beta^\e_s + (2-\chi/(4\pi))t$ (its law does of course not depend
on $\e$) and we prove that $\lim_{\e\to 0} \E[\sup_{[0,T]} |R^\e_t-R_t|]=0$ for all $T>0$, which clearly suffices.

\vip

Since $\int_{\e^{3/2}}^\e x^{-1} dx =\log(1/ \e) /2$, one may construct a family of 
$C^2$ nondecreasing 
convex functions $\varphi_\e:[0,\infty) \mapsto [0,\infty)$, indexed by $\e\in(0,1/2)$ such that 
$\varphi_\e(x)=0$ for $x\leq \e^{3/2}$, 
$\varphi'(x)=1$ for $x\geq \e$ and $\varphi''_\e(x)\leq C 1_{\{\e^{3/2}\leq x\leq \e\}} /[x\log(1/\e)]$ for some 
constant $C\in(1,+\infty)$ not depending on $\e$. Such functions are called Yamada functions in the 
literature. We then observe that $R^\e_t\geq R_t$ for all $t\geq 0$ by the comparison
theorem stated in \cite[Theorem 3.7 p 394]{reyor}. Computing $R^\e_t-R_t$ and applying the 
It\^o formula, we obtain that
\begin{align*}
\varphi_\e(R^\e_t-R_t)=&2\int_0^t\varphi'_\e(R^\e_s-R_s)(\sqrt{R^\e_s}-\sqrt{R_s})
d\beta_s+\frac{\chi}{4\pi}\int_0^t\varphi'_\e(R^\e_s-R_s)\frac{\e^2}{\e^2+4R^\e_s}ds\\
&+2\int_0^t \varphi''_\e(R^\e_s-R_s)(\sqrt{R^\e_s}-\sqrt{R_s})^2ds.
\end{align*}
We next remark that
$$
\varphi'_\e(R^\e_s-R_s)\frac{\e^2}{\e^2+4R^\e_s}
\leq \varphi'_\e(R^\e_s-R_s)\frac{\e^2}{\e^2+4(R^\e_s-R_s)}
\leq \indiq_{\{R^\e_s-R_s\geq \e^{3/2}\}} \frac{\e^2}{\e^2+4(R^\e_s-R_s)}\leq \frac{\sqrt\e}{4}
$$
and that
$$
\varphi''_\e(R^\e_s-R_s)(\sqrt{R^\e_s}-\sqrt{R_s})^2\leq \varphi''_\e(R^\e_s-R_s)(R^\e_s-R_s) 
\leq \frac{C}{\log(1/\e)},
$$
whence (the constant $C$ may now change from line to line)
\begin{align}\label{tip}
\varphi_\e(R^\e_t-R_t)\leq 2\int_0^t\varphi'_\e(R^\e_s-R_s)(\sqrt{R^\e_s}-\sqrt{R_s})d\beta_s
+ \frac{\chi \sqrt \e}{16\pi} t +\frac{C}{\log(1/\e)} t.
\end{align} 
Taking expectations, we conclude that $\E[\varphi_\e(R^\e_t-R_t) ] \leq Ct / \log(1/\e)$.
But since $\varphi_\e(x)\leq x \leq \varphi_\e(x)+\e$, 
we deduce that $\E[R^\e_t-R_t] \leq \e + C t/\log(1/\e)$.
Coming back to \eqref{tip}, using the Doob inequality and that $0 \leq \varphi'_\e \leq 1$ and 
$(\sqrt{R^\e_s}-\sqrt{R_s})^2 \leq R^\e_s-R_s$, we conclude that 
$\E[\sup_{[0,T]}\varphi_\e(R^\e_t-R_t) ] \leq C T/\log(1/\e) + C (\e T + C T^2/\log(1/\e))^{1/2}$
and, finally, that $\E[\sup_{[0,T]} (R^\e_t-R_t) ] \leq \e + C T/\log(1/\e) + C (\e T + C T^2/\log(1/\e))^{1/2}$,
from which the conclusion follows.

\vip

{\it Step 5.} Finally, we assume that $\chi\geq 8\pi$ and we prove that $(R_t^\e)_{t\geq 0}$ 
goes in law to the  $(2-\chi/(4\pi))$-dimensional squared Bessel process
frozen when it reaches $0$. We consider the frozen $(2-\chi/(4\pi))$-dimensional squared Bessel 
process associated to $(\beta^\e_t)_{t\geq 0}$, that is $R_t=R_0+2\intot \sqrt{R_s}d\beta^\e_s + (2-\chi/(4\pi))t$
for all $t \in [0,\tau]$, with $\tau=\inf\{t\geq 0 \; : \; R_t=0\}$ and $R_t=0$ for all $t\geq \tau$.
We will check that for all $\alpha>0$, all $T>0$, $\lim_{\e\to 0}\Pro(\sup_{[0,T]} |R^\e_t - R_t|>\alpha)=0$
and this will complete the proof.
We introduce $\tau_k=\inf\{t\geq 0 \; : \; R_t \leq 1/k\}$
and observe that $\tau=\sup_{k\geq 1} \tau_k$.

\vip

{\it Step 5.1.} For any $\alpha>0$, $t\geq 0$ and $k\geq 1$, 
$\lim_{\e\to 0}\Pro(\sup_{[0,t\land \tau_k)}|R^\e_s-R_s|\geq \alpha)=0$.
Indeed, using that $R^\e_t \geq R_t$ for all $t\geq 0$ by the comparison theorem \cite[Theorem 3.7 p 394]{reyor},
that $R^\e_t-R_t=2 \intot (\sqrt{R^\e_s}-\sqrt{R_s}) d\beta_s^\e+ (\chi/4\pi)\intot [\e^2/(\e^2+4R^\e_s)] ds$
for all $t\in [0,\tau_k]$, and that $|\sqrt x -\sqrt y|\leq k^{1/2}|x-y|/2$ for all $x,y\geq 1/k$, 
it is easily checked, by the Doob inequality, that 
$$
\E\Big[\sup_{[0,t\land \tau_k)}(R^\e_s-R_s)^2\Big]\leq C k \intot \E\Big[\sup_{[0,s\land \tau_k)}(R^\e_u-R_u)^2\Big] ds 
+ C \e^4 k^2 t^2, 
$$
whence $\E[\sup_{[0,t\land \tau_k)}(R^\e_s-R_s)^2 ]\leq  C \e^4k^2t^2\exp(Ckt)$
by the Gronwall lemma.

\vip
{\it Step 5.2.} We write, for $\alpha>0$ and $k\geq 1$ fixed,
\begin{align*}
\Pro\Big(\sup_{[0,T]} (R^\e_t-R_t) \geq \alpha \Big) \leq &
\Pro\Big(\sup_{[0,T\land \tau_k]} (R^\e_t-R_t) \geq \alpha \Big) + 
\Pro\Big(\tau_k<T, R^\e_{\tau_k} > 2/k\Big) \\
&+ \Pro\Big(\tau_k<T, R^\e_{\tau_k} \leq 2/k,  \sup_{[\tau_k,T]} R^\e_t \geq \alpha \Big).
\end{align*}
For the last term, we used that $\sup_{[\tau_k,T]}(R^\e_t-R_t) \geq \alpha$ implies that 
$\sup_{[\tau_k,T]} R^\e_t \geq \alpha$ because $0\leq R_t \leq R^\e_t$.
By Step 5.1, the two first terms tend to $0$ as $\e\to 0$ (recall that $R_{\tau_k}=1/k$), whence
\begin{align*}
\limsup_{\e\to 0} \Pro\Big(\sup_{[0,T]} (R^\e_t-R_t) \geq \alpha \Big) \leq\limsup_{\e\to 0}
\Pro\Big(\tau_k<T, R^\e_{\tau_k} \leq 2/k,  \sup_{[\tau_k,T]} R^\e_t \geq \alpha \Big).
\end{align*}
Using the strong Markov property for the process $R^\e$ as well as its monotony with respect to its initial
condition (by the comparison theorem), we deduce that 
\begin{align*}
\limsup_{\e\to 0} \Pro\Big(\sup_{[0,T]} (R^\e_t-R_t) \geq \alpha \Big) \leq\limsup_{\e\to 0}
\Pro\Big(\sup_{[0,T]} R^{2/k,\e}_t \geq \alpha \Big),
\end{align*}
where 
$$
R^{2/k,\e}_t=2/k+2 \intot \sqrt{R^{2/k,\e}_s} d\beta_s^\e
+ \intot \Big(2- \frac{\chi R^{2/k,\e}_s}{\pi(\e^2+4R^{2/k,\e}_s)}\Big)ds. 
$$
We introduce, for $r\in(0,1)$ and $\e\in(0,1/2)$, the solution $(S^{r,\e}_t)_{t\geq 0}$
to $S^{r,\e}_t=r+2 \intot \sqrt{|S^{r,\e}_s|}d\beta_s^\e + 2\e^2 \intot (\e^2+4 |S^{r,\e}_s|)^{-1}ds$.
Such a solution is pathwise unique by \cite[Theorem 3.5 p 390]{reyor} 
and nonnegative by the comparison theorem \cite[Theorem 3.7 p 394]{reyor}.
Again by the comparison theorem,
and since $\chi\geq 8 \pi$, we find that a.s., $R^{2/k,\e}_t\leq S^{2/k,\e}_t$ for all $t\geq 0$.
Hence 
\begin{align*}
\limsup_{\e\to 0} \Pro\Big(\sup_{[0,T]} (R^\e_t-R_t) \geq \alpha \Big) \leq\limsup_{\e\to 0}
\Pro\Big(\sup_{[0,T]} S^{2/k,\e}_t \geq \alpha \Big) \leq \limsup_{\e\to 0} \frac{\E[\sup_{[0,T]} S^{2/k,\e}_t]}{\alpha}.
\end{align*}
We will verify in the next step that (if $r \in (0,1]$)
\begin{equation}
\E\Big[\sup_{[0,T]} S^{r,\e}_t\Big] \leq C(1+T)(r+1/\log(1/\e))^{1/2}\label{majoes},
\end{equation}
so that $\limsup_{\e\to 0} \Pro(\sup_{[0,T]} (R^\e_t-R_t) \geq \alpha) \leq C(1+T) k^{-1/2}/\alpha$.
Letting $k$ tend to infinity, we 
conclude that, as desired,
$\limsup_{\e\to 0} \Pro(\sup_{[0,T]} (R^\e_t-R_t) \geq \alpha)=0$.

\vip

{\it Step 5.3.} 
To show \eqref{majoes}, we consider the Yamada function $\varphi_\e$ built in Step 4.
By the It\^o formula,
$$
\varphi_\e(S^{r,\e}_t)=\varphi_\e(r)+2\intot \varphi_\e'(S^{r,\e}_s)\sqrt{S^{r,\e}_s}d\beta^\e_s 
+  \intot \varphi_\e'(S^{r,\e}_s) \frac{2\e^2}{\e^2+4 S^{r,\e}_s}ds + 2 \intot\varphi_\e''(S^{r,\e}_s)S^{r,\e}_sds.
$$
Proceeding as in Step 4, we find that 
\begin{align}\label{tap}
\varphi_\e(S^{r,\e}_t)\leq& r +2\intot \varphi_\e'(S^{r,\e}_s)\sqrt{S^{r,\e}_s}d\beta^\e_s 
+ \frac{\sqrt \e}{2}t + \frac C {\log(1/\e)}t \\
\leq& r +\frac C {\log(1/\e)}t
+2\intot \varphi_\e'(S^{r,\e}_s)\sqrt{S^{r,\e}_s}d\beta_s.\notag
\end{align}
Taking expectations, we deduce that $\E[\varphi_\e(S^{r,\e}_t)]\leq r + C t / \log(1/\e)$, whence
$\E[S^{r,\e}_t]\leq r +\e + C t / \log(1/\e)$. Coming back to \eqref{tap} and using the Doob inequality and
that $0\leq \varphi_\e'\leq 1$, we conclude that
$$
\E\Big[\sup_{[0,T]} \varphi_\e(S^{r,\e}_t)\Big] \leq r + \frac{C T}{\log(1/\e)}+ 
C\Big(r T + \e T + \frac{T^2}{\log(1/\e)}   \Big)^{1/2} \leq C (1+T)\Big(r + \frac{1}{\log(1/\e)}\Big)^{1/2}
$$
because $r\in (0,1]$. Then \eqref{majoes} follows from the fact that $x\leq\e+\varphi_\e(x)$.

\vip

\end{proof}

This allows us to conclude when $\chi \geq 8\pi$. 

\begin{proof}[Proof of Theorem \ref{thN2} when $\chi\geq 8\pi$]
The existence of a (weak) solution to \eqref{eqz} follows from Lemma 
\ref{tropfacile}, and the solution built there is frozen when it reaches $0$. 
The pathwise  uniqueness of such a frozen solution follows from the Lipschitz continuity of coefficients 
$\sigma,b$ on $\rd\setminus\{0\}$ and can easily be verified using the stopping times $\tau_\ell=\inf\{t\geq 0\; : \;
|Z_t|\leq 1/\ell\}$ and that $\tau=\inf\{t\geq 0\; : \;|Z_t|=0\} = \sup_{\ell \geq 1} \tau_\ell$
(because $t\mapsto Z_t$ is a.s. continuous on $[0,\infty)$).
Using Lemma \ref{tropfacile}, we easily conclude that
$(Z^\e_t)_{t\geq 0}$ goes in law to this $(Z_t)_{t\geq 0}$.
Since $D^\e_t=|Z^\e_t|^{-2/3}Z^\e_t$ and since the map $z \mapsto |z|^{-2/3}z\indiq_{\{z \ne 0\}}$ is continuous,
we conclude that $(D_t^\e)_{t\geq 0}$ goes in law, as $\e\to 0$, to $(|Z_t|^{-2/3}Z_t\indiq_{\{Z_t\ne 0\}})_{t\geq 0}$.
\end{proof}

To conclude the proof when $\chi \in (0,8\pi)$, the only issue is to check the the uniqueness in law
of the solution.
We define $h_{2\pi}(\theta)=\theta - 2 \pi \lfloor \theta/ (2 \pi) \rfloor \in [0,2\pi)$.

\begin{lem}\label{superlem}
Consider $0\leq s_0 < t_0$, a continuous function $r : [0,\infty) \mapsto \rr_+$ satisfying
that $r_{s_0}=r_{t_0}=0$ and $r_t>0$ for all $t\in (s_0,t_0)$ and 
$\int_{s_0}^{t} (r_s)^{-1} ds=\infty$ for all $t\in (s_0,t_0)$.
There is a law $\Gamma(s_0,t_0,(r_s)_{s\in [s_0,t_0]})$ on $C((s_0,t_0),[0,2\pi))$ 
(with the torus topology on $[0,2\pi)$)  such that for any filtration $(\cH_t)_{t\geq 0}$ in which
we have a $1$-dimensional 
$(\cH_t)_{t\geq 0}$-Brownian motion $(\gamma_t)_{t\geq 0}$ and a $(\cH_t)_{t\geq 0}$-adapted process
$(T_t)_{t\in (s_0,t_0)}$ with $T_t=h_{2\pi}(T_u + \int_{u}^t\! (r_s)^{-1/2} d \gamma_s)$ for all $s_0<u<t<t_0$,
$(T_t)_{t\in (s_0,t_0)}$ is independent of $\cH_{s_0}$ and is
$\Gamma(s_0,t_0,(r_s)_{s\in [s_0,t_0]})$-distributed.
\end{lem}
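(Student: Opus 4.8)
The plan is to reduce the statement, via a \emph{deterministic} time change, to the fact that a Brownian motion on the circle $[0,2\pi)$ forgets its starting point --- and anything measurable with respect to the past --- once it has run for an infinite amount of clock time; here the clock is $t\mapsto\int^t(r_s)^{-1}ds$, which by hypothesis runs to $+\infty$ as one approaches $s_0$.

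\textbf{Time change.} Given $(\cH_t)_{t\ge0}$, the Brownian motion $(\gamma_t)_{t\ge0}$ and the adapted process $(T_t)_{t\in(s_0,t_0)}$ as in the statement, fix $u\in(s_0,t_0)$. Since $r$ is continuous and positive on $[u,t_0)$, the function $\psi_u(t)=\int_u^t(r_s)^{-1}ds$ is a continuous strictly increasing bijection of $[u,t_0)$ onto $[0,c_u)$, $c_u=\int_u^{t_0}(r_s)^{-1}ds\in(0,\infty]$, and $M^u_t:=\int_u^t(r_s)^{-1/2}d\gamma_s$ is a continuous local martingale with \emph{deterministic} bracket $\langle M^u\rangle_t=\psi_u(t)$. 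By the time-change theorem for continuous local martingales with strictly increasing deterministic bracket (see \cite[Ch.~V]{reyor}), $B^u_v:=M^u_{\psi_u^{-1}(v)}$, $v\in[0,c_u)$, is a Brownian motion. As $M^u$ is a measurable functional of the increments $(\gamma_{u+h}-\gamma_u)_{h\ge0}$, which are independent of $\cH_u$, the process $B^u$ is independent of $\cH_u$. Finally, the hypothesis gives, for $s_0<u<t<t_0$,
\[
T_t=h_{2\pi}\Big(T_u+\int_u^t(r_s)^{-1/2}d\gamma_s\Big)=h_{2\pi}\big(T_u+B^u_{\psi_u(t)}\big);
\]
in particular $t\mapsto T_t$ has a.s. continuous paths into the torus $[0,2\pi)$, so $\cL((T_t)_{t\in(s_0,t_0)})$ is a law on $C((s_0,t_0),[0,2\pi))$ determined by its finite-dimensional distributions.

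\textbf{The circle forgets.} We use the elementary bound: for a $[0,2\pi)$-valued random variable $\Xi$ independent of a Brownian motion $(\beta_v)_{v\ge0}$,
\[
\big\|\cL(h_{2\pi}(\Xi+\beta_v))-\mathrm{Unif}([0,2\pi))\big\|_{TV}\le\sum_{n\ge1}e^{-n^2v/2},
\]
obtained by conditioning on $\Xi$ and using $p_v(x)-\tfrac1{2\pi}=\tfrac1\pi\sum_{n\ge1}e^{-n^2v/2}\cos(nx)$ for the wrapped heat kernel $p_v$; and we use that $h_{2\pi}(U+\zeta)$ is uniform when $U$ is uniform and independent of $\zeta$. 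Now fix $s_0<t_1<\dots<t_k<t_0$, put $w_i=\int_{t_1}^{t_i}(r_s)^{-1}ds$ (depending only on $r$ and the $t_i$'s), and let $\Gamma_{t_1,\dots,t_k}$ be the law of $(h_{2\pi}(U+G_{w_i}))_{1\le i\le k}$, with $U\sim\mathrm{Unif}([0,2\pi))$ independent of a standard Brownian motion $G$. Fix $u\in(s_0,t_1)$ and condition on $\cH_u$: by the time-change step $T_{t_i}=h_{2\pi}(\theta+B_{v_i})$ with $\theta=T_u$ constant, $B=B^u$ a standard Brownian motion and $v_i=\psi_u(t_i)$; rewriting $T_{t_i}=h_{2\pi}(T_{t_1}+(B_{v_i}-B_{v_1}))$ and invoking the Markov property of $B$ at $v_1$, the $\cH_u$-conditional law of $(T_{t_i})_{1\le i\le k}$ is the image of $\lambda_u\otimes\cL((G_{w_i})_{1\le i\le k})$ under $(\theta',b)\mapsto(h_{2\pi}(\theta'+b_i))_{1\le i\le k}$, where $\lambda_u$ is the $\cH_u$-conditional law of $T_{t_1}$. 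Since the same map sends $\mathrm{Unif}\otimes\cL((G_{w_i})_{1\le i\le k})$ to $\Gamma_{t_1,\dots,t_k}$, we obtain a.s.
\[
\big\|\cL\big((T_{t_i})_{1\le i\le k}\mid\cH_u\big)-\Gamma_{t_1,\dots,t_k}\big\|_{TV}\le\|\lambda_u-\mathrm{Unif}\|_{TV}\le\sum_{n\ge1}e^{-n^2v_1/2}=:\varepsilon_u.
\]

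\textbf{Transfer and conclusion.} Since $\cH_{s_0}\subset\cH_u$, conditioning the last display further on $\cH_{s_0}$ gives $\|\cL((T_{t_i})_{1\le i\le k}\mid\cH_{s_0})-\Gamma_{t_1,\dots,t_k}\|_{TV}\le\varepsilon_u$ a.s. Letting $u\downarrow s_0$, we have $v_1=\int_u^{t_1}(r_s)^{-1}ds\to\int_{s_0}^{t_1}(r_s)^{-1}ds=+\infty$ by hypothesis, so $\varepsilon_u\to0$ and hence $\cL((T_{t_i})_{1\le i\le k}\mid\cH_{s_0})=\Gamma_{t_1,\dots,t_k}$ a.s. As this holds for every finite family of times, $(T_t)_{t\in(s_0,t_0)}$ is independent of $\cH_{s_0}$ and its law $\Gamma:=\cL((T_t)_{t\in(s_0,t_0)})$, an element of $C((s_0,t_0),[0,2\pi))$ by the first step, is determined by the $\Gamma_{t_1,\dots,t_k}$'s, hence by $(s_0,t_0,(r_s)_{s\in[s_0,t_0]})$ alone --- which is the assertion. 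The only real subtlety is that one cannot condition directly on $\cH_{s_0}$: the time change produces a genuine Brownian motion $B^u$ independent of the ``initial value'' $T_u$ only after conditioning on $\cH_u$ with $u>s_0$, so one must first establish the total-variation bound uniformly at level $\cH_u$, push it down to $\cH_{s_0}$ by the tower property, and only then send $u\downarrow s_0$; because the left-hand side of that bound does not depend on $u$, the limit forces exact equality with $\Gamma_{t_1,\dots,t_k}$.
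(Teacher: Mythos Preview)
Your proof is correct and, while it shares the paper's core idea (a large-variance Gaussian wrapped on the circle is close to uniform, and the hypothesis $\int_{s_0}^{t}(r_s)^{-1}ds=\infty$ supplies that variance), your execution differs in several ways worth noting. The paper decomposes the interval into three pieces $(s_0,s_0+\epsilon)$, $(s_0+\epsilon,s_0+\eta)$, $(s_0+\eta,u_0)$, conditions on $\cH_{s_0}\vee\sigma(T_{s_0+\epsilon},(\gamma_s-\gamma_{s_0+\eta})_{s\ge s_0+\eta})$, and obtains for each periodic test function $\varphi$ a bound of order $1/\sigma_{\epsilon,\eta}$ via an integration-by-parts trick on the antiderivative of $\varphi-\bar\varphi$; it then lets $\epsilon\downarrow 0$, then $\eta\downarrow 0$. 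You instead time-change $\int_u^\cdot(r_s)^{-1/2}d\gamma_s$ (whose bracket is deterministic) to a genuine Brownian motion $B^u$ independent of $\cH_u$, use the Fourier expansion of the wrapped heat kernel to get the exponential TV bound $\sum_{n\ge1}e^{-n^2v/2}$, and control all finite-dimensional laws simultaneously through total variation; the transfer from $\cH_u$ to $\cH_{s_0}$ is by the tower property, and the single limit $u\downarrow s_0$ suffices. Your approach is arguably cleaner---the time change isolates the circle-Brownian structure, the Fourier bound is sharper than the paper's $O(1/\sigma)$, and TV distance handles all test functions at once---while the paper's argument is more elementary in that it avoids the Dambis--Dubins--Schwarz machinery and the heat-kernel Fourier series. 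Both routes hinge on the same probabilistic phenomenon; yours packages it more abstractly.
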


\begin{proof}[Proof of Lemma \ref{superlem}]
{\it Existence.} Let $u_0\in(s_0,t_0)$ be chosen arbitrarily. We consider a 
Brownian motion $(\gamma_t)_{t\geq 0}$, independent of a random variable $\Theta$,
uniformly distributed on $[0,2\pi)$.
We put $T_t=h_{2\pi}(\Theta + \int_{u_0}^t (r_s)^{-1/2} d \gamma_s)$
for all $t\in (s_0,t_0)$ (with $\int_{u_0}^t (r_s)^{-1/2} d \gamma_s
=-\int_t^{u_0}(r_s)^{-1/2} d \gamma_s$ when $t<u_0$). Then 
$(T_t)_{t\in(s_0,t_0)}$ is clearly continuous for the torus topology and 
it holds that
$T_t=h_{2\pi}(T_u + \int_{u}^t (r_s)^{-1/2} d\gamma_s)$ for all $s_0<u<t<t_0$.
Furthermore, for each fixed $t\in(s_0,t_0)$, by independence between $\Theta$ and $\gamma$, 
the conditional law of $T_t$ knowing $(\gamma_s)_{s\geq 0}$ is the uniform distribution on $[0,2\pi)$,
which implies that $T_t$ is independent of $(\gamma_s)_{s\geq 0}$.
Finally, we have to verify that setting $\cH_t=\sigma((T_s)_{},(\gamma_s)_{s\in [0,t]})$, $(\gamma_s)_{s\geq 0}$
is a $(\cH_t)_{t\geq 0}$-Brownian motion. Let thus $t\in(s_0,t_0)$ be fixed.
We have to verify that $(\gamma_{s}-\gamma_t)_{s\geq t}$ is independent of $(T_s,\gamma_s)_{s\in(s_0,t]}$.
Since $T_s=h_{2\pi}(T_t - \int_s^t(r_u)^{-1/2} d \gamma_u)$ for all $s\in (s_0,t]$, it holds that
$\sigma((T_s,\gamma_s)_{s\in(s_0,t]})=\sigma(T_t,(\gamma_s)_{s\in(s_0,t]})$ and the conclusion easily 
follows from the independence  between $T_t$ and $(\gamma_s)_{s\in[s_0,t_0]}$.

\vip

{\it Uniqueness.} We thus consider a filtration $(\cH_t)_{t\geq 0}$ in which we have a
Brownian motion $(\gamma_t)_{t\geq 0}$ and an adapted process
$(T_t)_{t\in (s_0,t_0)}$ satisfying $T_t=h_{2\pi}(T_u + \int_{u}^t (r_s)^{-1/2} d \gamma_s)$ for all $s_0<u<t<t_0$.
We will show that for any fixed $u_0 \in (s_0,t_0)$, $T_{u_0}$ is uniformly distributed on $[0,2\pi)$
and independent of $\cH_{s_0}\vee\sigma((\gamma_t)_{t\geq 0})$. Since $(T_t)_{t\in[s_0,t_0]}$ is
$\sigma(T_{u_0},(\gamma_t-\gamma_{s_0})_{t\in (s_0,t_0)})$-measurable and since $(\gamma_t)_{t\geq 0}$
is a $(\cH_t)_{t\geq 0}$-Brownian motion, we conclude that  $(T_t)_{t\in(s_0,t_0)}$ is independent of $\cH_{s_0}$.
Furthermore, the process $(T_t)_{t\in(s_0,t_0)}$ clearly has the same law as the one built above.

\vip

For 
$0<\e<\eta<u_0-s_0$, we have $T_{u_0}=h_{2\pi}(T_{s_0+\epsilon} + \int_{s_0+\epsilon}^{s_0+\eta}(r_s)^{-1/2} d \gamma_s
+\int_{s_0+\eta}^{u_0}(r_s)^{-1/2} d \gamma_s)$. By assumption, the vector 
$(\int_{s_0+\epsilon}^{s_0+\eta}(r_s)^{-1/2} d \gamma_s,\int_{s_0+\eta}^{u_0}(r_s)^{-1/2} d \gamma_s)$
has independent components and is independent of $\cH_{s_0}\vee\sigma(T_{s_0+\epsilon})$. 
Setting $\sigma_{\e,\eta}=\int_{s_0+\epsilon}^{s_0+\eta}(r_s)^{-1}ds$, we thus have, for any
$\varphi:\rr \mapsto [0,\infty)$ continuous and $2\pi$-periodic, 
\begin{align}\label{jab}
&\E[\varphi(T_{u_0}) \;\vert\; \cH_{s_0}\vee\sigma(T_{s_0+\e},(\gamma_{s}-\gamma_{s_0+\eta})_{s\geq s_0+\eta})]\\
=& \int_\rr \varphi\Big(T_{s_0+\e} +\int_{s_0+\eta}^{u_0}(r_s)^{-1/2} d \gamma_s+ \sigma_{\e,\eta} x \Big) 
\frac{e^{-x^2/2}}{\sqrt{2\pi}} dx 
\to (2\pi)^{-1} \int_0^{2\pi} \varphi(x)dx\notag
\end{align}
a.s. as $\e\to 0$. This last convergence follows from the facts that $\lim_{\e\to 0} \sigma_{\e,\eta} =\infty$
and that, setting $\bar{\varphi}(x):=\varphi(x)-(2\pi)^{-1} \int_0^{2\pi} \varphi(y)dy$ 
and $\Phi(x):=\int_0^x\bar{\varphi}(y)dy$, for all $\theta \in [0,2\pi)$,
\begin{align*}
   \Big|\int_\rr \varphi\Big(\theta + \sigma x \Big) \frac{e^{-x^2/2}}{\sqrt{2\pi}}& dx
- \frac 1{2\pi} \int_0^{2\pi} \varphi(x)dx \Big|=\frac1{\sqrt{2\pi}}\Big|\sum_{k\in{\mathbb Z}}
\int_{0}^{2\pi/\sigma}\bar{\varphi}(\sigma y)e^{-(y-(\theta+2k\pi)/\sigma)^2/2}dy\Big |\\
&=\frac1{\sigma\sqrt{2\pi}}\Big|\sum_{k\in{\mathbb Z}}\int_{0}^{2\pi/\sigma}\Phi(\sigma y)
\times(y-(\theta+2k\pi)/\sigma)e^{-(y-(\theta+2k\pi)/\sigma)^2/2}dy\Big|\\
&\leq \frac{\sqrt{2\pi}}\sigma\sup_{x\in[0,2\pi)}|\varphi(x)|\int_\rr|z|e^{-z^2/2}dz\\
&=\frac{2\sqrt{2\pi}}\sigma\sup_{x\in[0,2\pi)}|\varphi(x)|.
\end{align*}
We used an integration by parts, that $\Phi(0)=\Phi(2\pi)=0$ and that 
$|\Phi(y)|\leq 2\pi \sup_{x\in[0,2\pi)}|\varphi(x)|$
for all $y \in [0,2\pi)$.

\vip

We deduce from \eqref{jab} that $T_{u_0}$ is uniformly distributed on $[0,2\pi)$ and is independent
of $\cH_{s_0}\vee\sigma((\gamma_{s}-\gamma_{s_0+\eta})_{s\geq s_0+\eta})$. Since $\eta>0$ can be chosen 
arbitrarily small, we conclude that
$T_{u_0}$ is independent of $\cH_{s_0}\vee\sigma((\gamma_s-\gamma_{s_0})_{s\geq s_0})
=\cH_{s_0}\vee\sigma((\gamma_s)_{s\geq 0})$ 
as desired.
\end{proof}

\begin{lem}\label{wu}
Assume that $\chi \in (0,8\pi)$.
There is uniqueness in law for \eqref{eqz} among solutions such that a.s., $\int_0^\infty\indiq_{\{Z_t=0\}}dt=0$.
\end{lem}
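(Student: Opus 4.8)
The plan is to decompose a solution into its radial and angular parts and to invoke two rigidity facts: the radial part is a squared Bessel process of dimension $\delta:=2-\chi/(4\pi)\in(0,2)$, whose law is forced, and the angular part on each excursion of the radial part away from $0$ is pinned down by Lemma \ref{superlem}. First I would treat the radial part. Let $(Z_t)_{t\ge0}$ be any solution of \eqref{eqz} with $\int_0^\infty\indiq_{\{Z_t=0\}}dt=0$ a.s., on a space carrying a $2$-dimensional Brownian motion $(W_t)_{t\ge0}$, and set $R_t=|Z_t|^{2/3}/4$, $D_t=|Z_t|^{-2/3}Z_t\indiq_{\{Z_t\ne0\}}$, so that $|D_t|^2=4R_t$. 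Applying It\^o's formula to $z\mapsto|z|^{2/3}/4$ on $\rd\setminus\{0\}$, and using that $\sigma$ vanishes at the origin and that $\{t:Z_t=0\}$ is Lebesgue-null — so that, exactly as in the proof of Remark \ref{rkpds}, the It\^o and drift integrals over that set vanish — one obtains
\[
R_t=R_0+2\intot\sqrt{R_s}\,d\beta_s+\Big(2-\frac{\chi}{4\pi}\Big)t,
\]
where $\beta$ is the one-dimensional Brownian motion obtained, via L\'evy's characterisation, from the continuous martingale part of $R$ (whose bracket equals $4\intot R_s\,ds$). Since pathwise uniqueness holds for this equation (\cite[Theorem 3.5 p 390]{reyor}), the law of $(R_t)_{t\ge0}$ — hence of its zero set and of its collection of excursion intervals — depends only on $R_0=|Z_0|^{2/3}/4$. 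As $Z_t=(4R_t)^{3/2}(\cos\Theta_t,\sin\Theta_t)$ on $\{R_t>0\}$ and $Z_t=0$ otherwise, where $\Theta_t:=h_{2\pi}(\arg D_t)$, it then suffices to show that the conditional law of the circle-valued process $(\Theta_t)$, restricted to the (now deterministic) excursion intervals of $R$, given $(Z_0,(R_t)_{t\ge0})$, is the same for all admissible solutions.

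Next I would handle the angular part. On $\{Z_t\ne0\}$, inverting the transformation $d\mapsto|d|^2d$ (which is how $\sigma,b$ were defined) shows that $D_t$ solves $dD_t=2\,dW_t+\chi K(D_t)\,dt$. Since $\arg$ is harmonic on $\rd\setminus\{0\}$ and $K(D_t)$ is collinear to $D_t$, It\^o's formula yields, on every excursion interval $(s_0,t_0)$ of $R$,
\[
d\Theta_t=\frac{2}{|D_t|^2}\,D_t^\perp\cdot dW_t=\frac{1}{\sqrt{R_t}}\,d\gamma_t,\qquad \gamma_t:=\intot\indiq_{\{Z_s\ne0\}}\frac{D_s^\perp}{|D_s|}\cdot dW_s,
\]
with $z^\perp:=(-z_2,z_1)$, and $\gamma$ is a one-dimensional Brownian motion with $\langle\beta,\gamma\rangle\equiv0$ (again by L\'evy's characterisation, using $\indiq_{\{Z_s=0\}}\,ds=0$). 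Thus $\Theta_t=h_{2\pi}\!\left(\Theta_u+\int_u^tR_s^{-1/2}\,d\gamma_s\right)$ for $s_0<u<t<t_0$; moreover $\int_{s_0}^tR_s^{-1}\,ds=\infty$ for $t\in(s_0,t_0)$, since the integral of the reciprocal of a $\mathrm{BESQ}^\delta$ process with $\delta\le2$ diverges at the left endpoint of every excursion (by the self-similarity of $\mathrm{BESQ}^\delta$ near $0$, or by the occupation-times argument of Remark \ref{rkpds}). Hence the hypotheses of Lemma \ref{superlem} are met on each excursion interval, and since $(\beta,\gamma)$ is a $2$-dimensional Brownian motion in the augmented natural filtration $(\cH_t)$ of $(Z_0,W)$, it is independent of $\cH_0=\sigma(Z_0)$, so $\beta$, $\gamma$ and $Z_0$ are mutually independent.

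Finally, working in $(\cH_t)$ — in which $\gamma$ is a Brownian motion and $(\Theta_t)$ is adapted — Lemma \ref{superlem} applied on each excursion interval $(s_0,t_0)$ of $R$ gives that $(\Theta_t)_{t\in(s_0,t_0)}$ has law $\Gamma(s_0,t_0,(R_s)_{s\in[s_0,t_0]})$ and is independent of $\cH_{s_0}$; on the initial interval before the first zero of $R$ (if any), $(\Theta_t)$ is the ordinary skew product of $\gamma$ and $R$, whose conditional law given $(Z_0,R)$ is fixed by the mutual independence just noted. Combining this with excursion theory for the $\mathrm{BESQ}^\delta$ process $R$ (which, since $\delta\in(0,2)$, reaches $0$ and reflects instantaneously), the independence from $\cH_{s_0}$ upgrades to conditional mutual independence, given $(R_t)_{t\ge0}$, of the angular paths over the distinct excursion intervals, each with law $\Gamma$ depending only on the corresponding excursion. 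Therefore the conditional law of $(\Theta_t)$ given $(Z_0,(R_t)_{t\ge0})$ — hence that of $(Z_t)$ given $(R_t)$, hence, by the first step, the law of $(Z_t)_{t\ge0}$ — is the same for every solution with $\int_0^\infty\indiq_{\{Z_t=0\}}dt=0$ a.s., which is the claimed uniqueness in law. The main obstacle is precisely this last point: Lemma \ref{superlem} is stated for one fixed deterministic excursion shape, so upgrading it to all the random excursions of $R$ at once, with their conditional independence given $R$, requires the excursion-theoretic framework (or, concretely, an enumeration of the excursions via the stopping times $d_q=\inf\{t\ge q:R_t=0\}$, $q\in{\mathbb Q}_+$, together with a monotone-class argument on finite-dimensional laws) and a check that $\gamma$ behaves correctly across the zeros of $R$; the clean use of It\^o's formula across $\{Z=0\}$ in the first two steps is handled exactly as in the proof of Remark \ref{rkpds}.
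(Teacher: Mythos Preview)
Your strategy is exactly the paper's: decompose $Z$ into radial $R=|Z|^{2/3}/4$ and angular $\Theta$, identify $R$ as a $\mathrm{BESQ}^{2-\chi/(4\pi)}$ process, show the angular increment on each excursion is driven by a one-dimensional Brownian motion $\gamma$ independent of $(R_0,\beta)$, then invoke Lemma \ref{superlem} conditionally on $R$. The paper's Step 5 carries out precisely the conditioning-on-$R$ argument you flag as ``the main obstacle'': since $R$ is $\sigma(R_0,\beta)$-measurable by pathwise uniqueness and $\gamma\perp(R_0,\beta)$, under the conditional law given $R$ the excursion intervals become deterministic while $\gamma$ remains a Brownian motion in the relevant filtration, so Lemma \ref{superlem} applies on each excursion and yields the claimed conditional independence.

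There are, however, two technical points where your writeup is not correct as stated. First, you cannot apply It\^o's formula to $z\mapsto|z|^{2/3}/4$ directly: its Hessian blows up at $0$, and your reference to Remark \ref{rkpds} is misplaced (there the test function is $|D|^2/4$, which is globally $C^2$). The paper fixes this by computing $d(|Z_t|^2+\eta)^{1/3}$ and letting $\eta\to0$, using $\int\indiq_{\{Z_s=0\}}ds=0$ only in the limit. Second, your assertion that $D_t$ solves \eqref{eqd} on $\{Z_t\ne0\}$ is fine \emph{locally on each open excursion interval} (where the inverse map is smooth), but globally it fails for $\chi\in[4\pi,8\pi)$ by Remark \ref{rkpds} itself; you should state the local version, which is all you need for the angular SDE. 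Finally, your justification of $\int_{s_0}^tR_s^{-1}ds=\infty$ is vague; the paper cites Khoshnevisan's exact modulus for $\sqrt{R}$, but a self-contained route is to apply It\^o to $\log R$ on the excursion and observe that finiteness of $\int R^{-1}$ would make both drift and bracket finite, contradicting $\log R_{s_0+\epsilon}\to-\infty$.
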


\begin{proof}
As in the proof of Theorem \ref{thN2} when $\chi\geq 8\pi$, \eqref{eqz} admits a 
pathwise unique solution until it reaches $0$.
All the difficulty is thus to prove the uniqueness in law of the solution started at $0$.
We thus consider, if it exists, a continuous solution
$(Z_t)_{t\geq 0}$ to \eqref{eqz} with $Z_0=0$, adapted to some filtration $(\cF_t)_{t\geq 0}$ in which
$(W_t)_{t\ge 0}$ is a $2$-dimensional Brownian motion, and such that $\int_0^\infty\indiq_{\{Z_t=0\}}dt$ vanishes a.s.

\vip

{\it Step 1.} We define $R_t=|Z_t|^{2/3}/4$ and 
$\beta_t=\intot \indiq_{\{Z_s\ne 0\}}|Z_s|^{-1}Z_s\cdot dW_s
$, which is clearly a $1$-dimensional $(\cF_t)_{t\geq 0}$-Brownian motion. 
Here we prove that 
\begin{equation}
   R_t=2\intot \sqrt{R_s} d\beta_s + (2-\chi/(4\pi))t.\label{evolr}
\end{equation}
Starting from \eqref{eqz} (with $Z_0=0$) and using the It\^o formula, we easily find that 
$$
|Z_t|^2= 12 \intot |Z_s|^{5/3} d\beta_s + (72-3\chi/\pi)\intot |Z_s|^{4/3}ds.
$$
For $\eta>0$, using again It\^o's formula, we find that
\begin{align*}
(|Z_t|^2+\eta)^{1/3}=&\eta^{1/3}+4\intot |Z_s|^{5/3}(|Z_s|^2+\eta)^{-2/3} d\beta_s+ 
(24-\chi/\pi)\intot|Z_s|^{4/3}(|Z_t|^2+\eta)^{-2/3}ds\\
&-16\intot|Z_s|^{10/3}(|Z_s|^2+\eta)^{-5/3}ds.
\end{align*}
Since $\int_0^t\indiq_{\{Z_s=0\}}ds=0$ a.s. by assumption, the Lebesgue theorem ensures us that 
the sum of the two last terms in 
the right-hand side converges a.s. to $(8-\chi/\pi)t$ as $\eta\to 0$. The It\^o isometry ensures that 
the second term in the right-hand side converges in $L^2$ to $4\intot |Z_s|^{1/3}d\beta_s$.
All in all, we find that $|Z_t|^{2/3}=4\intot|Z_s|^{1/3}d\beta_s +(8-\chi/\pi)t$. Dividing by $4$ completes
the proof of \eqref{evolr}.

\vip

{\it Step 2.} We consider, for each $\eta>0$, a nondecreasing $C^2$-function 
$\psi_\eta:[0,\infty)\mapsto[0,\infty)$ such that $\psi(u)=0$ for all $u\in [0,\eta/2]$ and
$\psi(u)=1$ for all $u\geq \eta$. Observing that $\psi_\eta(R_t)|Z_t|^{-1}Z_t=\Psi_\eta(Z_t)$ where 
$\Psi_\eta(z)=\psi_\eta(|z|^{2/3}/4)|z|^{-1}z$
is of class $C^2$ on $\rd$, we easily obtain, starting from \eqref{eqz} and applying the It\^o formula,
\begin{align}\label{psie}
\psi_\eta(R_t)\frac{Z_t}{|Z_t|}=& \intot \!
\psi_\eta(R_s)\Big(2Z^\perp_s\frac{Z_s^\perp.dW_s}{|Z_s|^{7/3}}-
\frac{2Z_s}{|Z_s|^{5/3}}ds\Big)\!+\!\intot\! \frac{Z_s}{|Z_s|}(\psi_\eta'(R_s)dR_s+2\psi_\eta''(R_s)R_sds)
\end{align}
where, for $z\in\rr^2$ with respective coordinates $z_1$ and $z_2$, $z^\perp$ denotes the element of 
$\rr^2$ with respective coordinates $-z_2$ and $z_1$.

\vip

Let $\gamma_t=\intot \indiq_{\{Z_s\ne 0\}}|Z_s|^{-1}Z_s^\perp\cdot dW_s$. Since 
$\langle\beta,\gamma\rangle_t=\intot \indiq_{\{Z_s\ne 0\}}|Z_s|^{-2}Z_s\cdot Z_s^\perp ds=0$, 
the process $(\gamma_t)_{t\geq 0}$ is a $1$-dimensional $(\cF_t)_{t\geq 0}$-Brownian independent of $(\beta_t)_{t\ge 0}$
and thus also of $(R_s)_{s\geq 0}$ (because $(R_s)_{s\geq 0}$ is 
$\sigma(R_0,(\beta_s)_{s\geq 0})$-measurable by pathwise uniqueness for the SDE it solves).

\vip

For any $0< u < t$, on the event $\{\inf_{[u,t]}R_s>0\}$, choosing $\eta \in (0,\inf_{[u,t]}R_s)$
in the difference between \eqref{psie} and the same equation with $t$ replaced by $u$, we obtain
\begin{align}\label{psie2}
\frac{Z_t}{|Z_t|}&= \frac{Z_u}{|Z_u|}
+ \int_u^t \Big(2\frac{Z^\perp_s}{|Z_s|}\frac{d\gamma_s}{|Z_s|^{1/3}}-
\frac{2Z_s}{|Z_s|^{5/3}}ds\Big)
=\frac{Z_u}{|Z_u|}+ \int_u^t \Big(\frac{Z^\perp_s}{|Z_s|}\frac{d\gamma_s}{\sqrt{R_s}}-
\frac{Z_s}{|Z_s|}\frac{ds}{2R_s}\Big).
\end{align}

{\it Step 3.} For $s>0$ such that $R_s>0$ we 
define $T_s\in[0,2\pi)$ through the equality $|Z_s|^{-1}Z_s=e^{iT_s}$.
For $s\geq 0$ with $R_s=0$, we simply put $T_s=0$.
We used the 
natural identification between $\rr^2$ and $\cc$ : for $\theta \in \rr$, we denote by $e^{i\theta}$ 
(resp. $ie^{i\theta}$) the $2$-dimensional vector with coordinates $\cos \theta$ and $\sin \theta$ 
(resp. $-\sin \theta$ and $\cos \theta$). We claim that for all $0< u < t$, on the event $\{\inf_{[u,t]}R_s>0\}$,
it holds that $T_t=h_{2\pi}(T_u+\int_u^tR_s^{-1/2}d\gamma_s)$.

\vip

To check this claim,  on the event $\{\inf_{[u,t]}R_s>0\}$,
we introduce
${\mathcal T}_v=T_u+\int_u^vR_s^{-1/2}d\gamma_s$, for all $v\in [u,t]$.
Since $(\gamma_v)_{v\geq 0}$ is independent of the event $\{\inf_{[u,t]}R_s>0\}$, we can apply the It\^o formula:
$$
\hbox{for all $v\in[u,t]$},\quad e^{i{\mathcal T}_v}=e^{iT_u}+\int_u^v \Big(ie^{i{\mathcal T}_s}\frac{d\gamma_s}{\sqrt{R_s}}-
e^{i{\mathcal T}_s}\frac{ds}{2R_s}\Big).
$$
Recalling \eqref{psie2} and using a uniqueness argument,
we deduce that on the event $\{\inf_{[u,t]}R_s>0\}$,
$|Z_v|^{-1}Z_v=e^{i{\mathcal T}_v}$ whence $T_v=h_{2\pi}({\mathcal T}_v)$ for all $v\in [u,t]$.

\vip

{\it Step 4.} Here we check that a.s., $\int_t^{t+h} R_s^{-1}ds = \infty$ for
all $t\geq 0$ such that $R_t=0$ and all $h>0$. This follows from the fact that for all $T>0$,
$\lim_{u\searrow 0}\sup_{t\in[0,T]} [u(1\vee \log(1/u))]^{-1/2}|\sqrt{R_{t+u}}-\sqrt{R_t}|=\sqrt{2}$ a.s., 
see Khoshnevisan \cite[(2.1a) p 1299]{kosh} and recall that $(R_s)_{s\geq 0}$ is a squared
$(2-\chi/(4\pi))$-dimensional Bessel process starting from $0$ by Step 1, with $2-\chi/(4\pi)>0$.

\vip

{\it Step 5.} 
Here we verify that conditionally on $(R_s)_{s\geq 0}$, for any $\sigma((R_s)_{s\geq 0})$-measurable finite family
$0<s_1<t_1<s_2<t_2<\dots<s_n<t_n$ such that for all $k=1,\dots,n$, 
$R_{s_k}=R_{t_k}=0$ and $R_s>0$ on $(s_k,t_k)$, the variables 
$\{(T_s)_{s\in(s_k,t_k)}, k=1,\dots,n\}$ are independent and for each $k=1,\dots,n$,
$(T_s)_{s\in(s_k,t_k)}$ is $\Gamma(s_k,t_k,(R_s)_{s\in(s_k,t_k)})$-distributed. The function $\Gamma$
was introduced in Lemma \ref{superlem}.

\vip
Let $({\bf Z}_t,{\bf g}_t)_{t\geq 0}$ denote the canonical process on $C([0,\infty),\rd\times\rr)$
endowed with the conditional law of $(Z_t,\gamma_t)_{t\geq 0}$ knowing $(R_t)_{t\geq 0}$.
We define ${\bf T}_t\in [0,2\pi)$ by $|{\bf Z}_t|^{-1}{\bf Z}_t=e^{i{\bf T}_t}$ if ${\bf Z}_t\ne 0$
and ${\bf T}_t=0$ else. We introduce the filtration $\cH_t=\sigma(({\bf T}_s,{\bf g}_s)_{s\in [0,t]})$.
We claim that a.s., $({\bf g}_t)_{t\geq 0}$ is a $(\cH_t)_{t\geq 0}$-Brownian motion, because 
$(\gamma_t)_{t\geq 0}$ is independent of $\sigma((R_s)_{s\geq 0})$ and is a Brownian motion in the filtration 
$(\cF_t)_{t\geq 0}$ to which $(T_t)_{t\geq 0}$ is adapted: for all 
$t>0$, all bounded measurable $\Phi,\Psi$,
\begin{align*}
&\E\Big[\Phi((\gamma_{t+s}-\gamma_t)_{s\geq 0})\Psi((\gamma_s,T_s)_{s\in[0,t]}) \Big\vert (R_s)_{s\geq 0} \Big]\\
=&\E\Big[\Psi((\gamma_s,T_s)_{s\in[0,t]}) \E\Big[\Phi((\gamma_{t+s}-\gamma_t)_{s\geq 0}) \Big\vert 
\cF_t\lor \sigma((R_s)_{s\geq 0}) \Big] \Big\vert (R_s)_{s\geq 0} \Big]
\\
=&\E\Big[\Phi((\gamma_{t+s}-\gamma_t)_{s\geq 0})\Big] 
\E\Big[\Psi((\gamma_s,T_s)_{s\in[0,t]}) \Big\vert (R_s)_{s\geq 0} \Big].
\end{align*}
Fix now $k\in \{1,\dots, n\}$.
It a.s. holds that ${\bf T}_t=h_{2\pi}({\bf T}_u+\int_u^t R_s^{-1/2}d {\bf g}_s)$ for all $s_k<u<t<t_k$ by Step 3
and that $\int_{s_k}^t R_s^{-1}ds=\infty$ for all $t\in (s_k,t_k)$ by Step 4.
Applying Lemma \ref{superlem}, we find that a.s.,
$({\bf T}_s)_{s\in(s_k,t_k)}$ is independent of $\cH_{s_k}$ and is $\Gamma(s_k,t_k,(R_s)_{s\in(s_k,t_k)})$-distributed.
Using that $({\bf T}_s)_{s\in(s_k,t_k)}$ is $\cH_{t_k}$-measurable for each $k=1,\dots,n$,
the independence easily follows.

\vip

{\it Step 6.} By Step 1, $(R_t)_{t\geq 0}$ is
a $(2-\chi/(4\pi))$-dimensional Bessel process starting from $0$.
By Step 5, the conditional law of $(T_t\indiq_{\{R_t\ne 0\}})_{t\geq 0}$ knowing $(R_t)_{t\geq 0}$ is also
determined: conditionally on $(R_s)_{s\geq 0}$, 
for any $\sigma((R_s)_{s\geq 0})$-measurable finite family $\{(s_k,t_k),k=1,\dots,n\}$ of excursions of 
$(R_s)_{s\geq 0}$, we know the law of 
$(T_s)_{s\in\cup_{k=1}^n (s_k,t_k)}$. Since by construction $Z_t=(4R_t)^{3/2}e^{iT_t}\indiq_{\{R_t \ne 0\}}$, 
the law of $(Z_t)_{t\geq 0}$ is thus entirely characterized.
\end{proof}

Finally, we can give the

\begin{proof}[Proof of Theorem \ref{thN2} when $\chi \in (0,8\pi)$]
First, the existence of a solution $(Z_t)_{t\geq 0}$ to \eqref{eqz} such that a.s. 
$\int_0^\infty\indiq_{\{Z_t=0\}}dt=0$ follows from Lemma 
\ref{tropfacile}: the solution $(Z_t)_{t\geq 0}$ built there satisfies that $|Z_t|^{2/3}/4$ is a
$(2-\chi/(4\pi))$-dimensional Bessel process, whence $\int_0^\infty\indiq_{\{Z_t=0\}}dt=0$ a.s. by \cite[p 442]{reyor}.
The uniqueness in law of this solution has been checked in Lemma \ref{wu}.
The convergence of $(Z^\e_t)_{t\geq 0}$ to $(Z_t)_{t\geq 0}$ clearly follows from Lemma \ref{tropfacile}
and from this uniqueness in law. This implies as in the case $\chi\geq 8\pi$ that 
$(D^\e_t)_{t\geq 0}$ goes in law to $(|Z_t|^{-2/3}Z_t\indiq_{\{Z_t\ne 0\}})_{t\geq 0}$.

\vip

It remains to verify that when $\chi\in (0,4\pi)$,
$D_t=|Z_t|^{-2/3}Z_t\indiq_{\{Z_t\ne 0\}}$ solves \eqref{eqd} and that uniqueness in law holds true for \eqref{eqd}.

\vip

For $(D_t)_{t\geq 0}$ a solution to \eqref{eqd},
one easily checks by It\^o's formula that $Z_t=|D_t|^2D_t$ solves \eqref{eqz} and that $|D_t|^2$ is a 
$(2-\chi/(4\pi))$-dimensional Bessel process, whence $\int_0^\infty\indiq_{\{Z_t=0\}}dt=0$ a.s. by \cite[p 442]{reyor}.
The uniqueness in law for \eqref{eqd} then follows from Lemma \ref{wu}.

\vip

For $(Z_t)_{t\geq 0}$ built above, by It\^o's formula, for $\eta>0$,
\begin{align*}
(|Z_t|^2+\eta)^{-1/3}&Z_t=(|Z_0|^2+\eta)^{-1/3}Z_0+2\intot |Z_s|^{2/3}(|Z_s|^2+\eta)^{-1/3}dW_s\\
&+4\intot\Big(|Z_s|^{-1/3}(|Z_s|^2+\eta)^{-1/3}-|Z_s|^{5/3}(|Z_s|^2+\eta)^{-4/3}\Big)Z_sd\beta_s\\
&+\intot\bigg((16-3\chi/(2\pi))|Z_s|^{-2/3}(|Z_t|^2+\eta)^{-1/3}+(\chi/\pi-48)|Z_s|^{4/3}(|Z_t|^2+\eta)^{-4/3}\\
&\hskip6cm+32|Z_s|^{10/3}(|Z_t|^2+\eta)^{-7/3}\bigg)Z_sds.
\end{align*}
By the It\^o isometry and the Lebesgue theorem and since a.s. $\int_0^t\indiq_{\{Z_s=0\}}ds=0$, the 
second term on the RHS tends to $2W_t$ in $L^2$ and the third term on the RHS tends to $0$ in $L^2$.
Since $|Z_t|^{2/3}/4$ is a $(2-\chi/(4\pi))$-dimensional squared Bessel process and $2-\chi/(4\pi)>1$, 
\cite[Exercise 1.26 p 451]{reyor} ensures that a.s. $\intot|Z_s|^{-1/3}ds<\infty$. Hence the Lebesgue 
theorem ensures us that the last term on the RHS converges a.s. to 
$-(\chi/(2\pi))\intot |Z_s|^{-4/3}Z_sds$.
We conclude that
$D_t=|Z_t|^{-2/3}Z_t\indiq_{\{Z_t\ne 0\}}$ solves $D_t=D_0+2W_t-(\chi/(2\pi))\intot |D_s|^{-2}D_sds$,
which completes the proof.
\end{proof}

\section{On the system with $N\ge 3$ particles}\label{Nps}

\subsection{Classification of reflecting and sticky collisions}\label{sss}
We have seen in the proof of Lemma \ref{lemnoncol3part}-Step 2 that very roughly,
the empirical variance of the positions of $k$ particles in the system with $N$ particles
resembles a squared Bessel process of dimension $\delta_{N,\chi}(k)=(k-1)(2-\chi k /(4\pi N))$.
Fix $\chi>0$ and $N\geq 3$ and consider the regularized particle system \eqref{psee}, which is always well-posed.
We now describe formally the expected behavior of its limit as $\e\to 0$.
According to \cite[Page 442]{reyor} and the comparison theorem 
\cite[Theorem 3.7 p 394]{reyor}, the following events should occur:

\vip

$\bullet$ if $\delta_{N,\chi}(k)\geq 2$, no collisions of subsystems of $k$ particles,

$\bullet$  if $\delta_{N,\chi}(k)\in (0,2)$, (instantaneously) reflecting collisions of subsystems of 
$k$ particles,

$\bullet$ if $\delta_{N,\chi}(k)\leq 0$, sticky collisions of subsystems of $k$ particles.

\vip

Let us now study the inequality $\delta_{N,\chi}(k) \geq 2$. 
We have already seen in the 
proof of Lemma \ref{lemnoncol3part}-Step 2 that when 
$\chi\in(0,8\pi(N-2)/(N-1)]$, $\delta_{N,\chi}(k)\ge 2$ for all $k\in\{3,\hdots,N\}$.
When $\chi\in(8\pi(N-2)/(N-1),4\pi N/3]$, $\delta_{N,\chi}(3)\ge 2$ 
whereas $\delta_{N,\chi}(2)<2$ and $\delta_{N,\chi}(N)<2$, hence the two roots 
$x^{\pm}_{N,\chi}=[1+(8\pi N)/\chi \pm \sqrt{(1+8\pi N/\chi)^2-64\pi N/\chi}]/2$
of the second order equation $\delta_{N,\chi}(x)=2$ are such that $x^-_{N,\chi}\in (2,3]$ and $x^+_{N,\chi}\in [3,N)$,
so that $\delta_{N,\chi}(2)<2$,  $\delta_{N,\chi}(k)\geq 2$ for $k\in \{3,\dots,\lfloor x^+_{N,\chi}\rfloor\}$
and   $\delta_{N,\chi}(k)<2$ for $k\in \{\lfloor x^+_{N,\chi}\rfloor+1,\dots,N\}$.
Finally, one easily checks that $x^-_{N,4\pi N/3}=3$ and $x^{+}_{N,4\pi N/3}=4$.
By strict monotonicity of the map $\chi\mapsto\delta_{N,\chi}(k)$, we conclude that if $\chi>4\pi N/3$,
then $\delta_{N,\chi}(k)<2$ for all $k\in\{2,\hdots,N\}$.

\vip

Let us next study the inequality $\delta_{N,\chi}(k)\le 0$, which, for $k\in\{2,\hdots,N\}$ is equivalent to 
$k\geq 8\pi N/\chi$. Hence for $\chi\in (0,8\pi)$, $\delta_{N,\chi}(k)>0$ for all $k\in\{2,\hdots,N\}$ 
whereas for $\chi\in [8\pi,4\pi N)$, $\delta_{N,\chi}(k)>0$ for all 
$k\in\{2,\hdots,\lceil 8\pi N/\chi\rceil -1\}$ and $\delta_{N,\chi}(k)\le 0$ for all 
$k\in\{\lceil 8\pi N/\chi\rceil,\hdots,N\}$ with the two sets non empty. When 
$\chi\ge 4\pi N$, $\delta_{N,\chi}(k)\le 0$ for all $k\in\{2,\hdots,N\}$.

%Since 
%$\delta'_{N,\chi}(\frac{8\pi N}{\chi})=\frac{\chi}{4\pi N}-2>-2$ and the function 
%$x\mapsto \chi_{N,\chi}(x)$ is concave, one has $\chi_{N,\chi}(\lceil 8\pi N/\chi \rceil-1)\in(0,2)$.

\vip

When $N\geq 6$, we end up with the following picture.

\vip

(a) If $\chi \in (0,8\pi(N-2)/(N-1)]$, the regularized particle system should tend
to the particle system \eqref{ps} and the latter should have a unique (in law) solution.
Indeed, it holds that $\delta_{N,\chi}(k) \geq 2$ for all $k\geq 3$ and that $\delta_{N,\chi}(2)\in (0,2)$,
so that only binary reflecting collisions occur.
We have already checked a tightness/consistency result in this spirit in Theorem \ref{pse2}.
Only the uniqueness in law remains open.

\vip

(b) If $\chi \in (8\pi(N-2)/(N-1),8\pi)$, the regularized particle system should tend
to the particle system \eqref{ps} and the latter should also have a unique (in law) solution.
One may check that $k_0:=\lfloor x^+_{N,\chi}\rfloor +1\in \{N-1,N\}$ (it suffices to verify that
$\delta_{N,\chi}(N-2)\geq \delta_{N,8\pi}(N-2)\geq 2$).
In this situation, there should be binary reflecting collisions and also 
reflecting collisions of subsystems of particles with cardinality in $\{k_0,N\}$.
To check the existence (and {\it a fortiori} uniqueness) of such a process, 
one has to control the drift term during the collisions with reflection.
In the present paper, we are more or less able to contol the drift during a (reflecting) binary 
collision, but we have not the least idea of what to do during a $k$-ary reflecting collision with $k\geq 3$.

\vip

(c) If $\chi \in [8\pi,4\pi N/3]$, the regularized particle system should tend to a particle
system with sticky collisions that we will describe more precisely in the next subsection. One can check
that, for $k_0:=\lfloor x^+_{N,\chi}\rfloor+1> 4$ and $k_1:=\lceil 8\pi N/\chi\rceil\leq N$, we have
$k_0 \in \{k_1-2,k_1-1\}$ (just verify that $\delta_{N,\chi}(k_1-3)\geq \delta_{N,\chi}(8\pi N/\chi-2)\geq 2$). 
Thus, binary reflecting collisions, as well as $k$-ary
reflecting collisions, for $k \in\{k_0,k_1-1\}$, should occur, as well as sticky collisions
of subsytem of $k$-particles, for $k\in\{k_1,\dots,N\}$.
Assume e.g. that $k_0=k_1-1$. What might happen is that, at some time,  
$k_0$ particles become close to each other, they may collide (with reflection) a few times,
then another particle is attracted in the zone, the $k_0+1=k_1$ particles meet and then remain stuck forever.
Such a cluster will move with a very small diffusion coefficient and should collide later with other particles
(or clusters) in a sticky way. Of course, such a result would be very interesting but
it seems very difficult to prove, because to check the existence of such a process,
one would have to control the drift term during the collisions with reflection, as mentioned previously.
The sticky collisions should be easier to describe.

\vip

(d) If $\chi \in (4\pi N/3,4\pi N)$, the same situation as previously should arise,
except that there should be $k$-ary reflecting collisions for all $k\in\{2,\dots,\lceil 8\pi N/\chi\rceil-1\}$
and sticky $k$-ary collisions for all $k\in\{\lceil 8\pi N/\chi\rceil,\dots,N\}$.

\vip

(e) If finally $\chi \geq 4\pi N$, then there should be  sticky $k$-ary collisions for all 
$k\in\{2,\dots,N\}$.

\vip

When $N=5$, we find the following dichotomy. If $\chi\in(0,6\pi]$, only binary reflecting collisions.
If $\chi\in(6\pi,20\pi/3]$, only reflecting collisions of subsystems of $k\in\{2,5\}$ particles.
If $\chi\in(20\pi/3,8\pi)$, only reflecting collisions of subsystems of $k\in\{2,3,4,5\}$ particles.
If $\chi\in[8\pi, 20\pi)$, reflecting collisions of subsystems of 
$k\in\{2,\hdots,\lceil40\pi/\chi\rceil-1\}$ particles and sticky collisions of subsystems of 
$k\in\{\lceil 40 \pi/\chi\rceil,\hdots,5\}$ particles.
If $\chi\geq 20\pi$, $k$-ary sticky collisions for all $k\in\{2,\dots,5\}$.

\vip

When finally $N\in\{3,4\}$, $8\pi(N-2)/(N-1)=4\pi N/3$ and 
the situation is as follows.  If $\chi \in (0,8\pi(N-2)/(N-1)]$, 
only binary reflecting collisions. If  $\chi\in (4\pi N/3,8\pi)$, reflecting collisions of subsystems of 
$k\in\{2,\hdots,N\}$ particles. If $\chi\in [8\pi,4\pi N)$, $k$-ary reflecting collisions for
$k\in\{2,\hdots,\lceil8\pi N/\chi\rceil-1\}$ and $k$-ary sticky collisions of subsystems of 
$k\in\{\lceil8\pi N/\chi\rceil,\hdots,N\}$ particles. If $\chi\geq 4\pi N$, $k$-ary sticky collisions for all 
$k\in\{2,\dots,N\}$.

\subsection{A particle system in the supercritical case}

When $\chi\ge 8\pi$, the following dynamics should describe the limit of the regularized particle system
as $\e\to 0$. Particles are characterized by their masses and their positions. Initially,
we start with $N$ particles with masses $\nu^1_0,\dots,\nu^N_0$ all equal to $1/N$ and with some given positions
$X^{1,N}_0,\dots,X^{N,N}_0$.
If now at some time
$t\geq 0$, we have $N_t$ particles ($N_t$ will be a.s. nonincreasing) with masses $\nu^1_t,\dots,\nu^{N_t}_t$
(such that $\sum_1^{N_t}\nu^i_t=1$),
we make the positions evolve according to
\begin{equation}\label{d}
dX^{i,N}_t=\sqrt{\frac{2}{N \nu^i_t}}dB^i_t+\chi \sum_{j=1}^{N_t}\nu^j_tK(X^{i,N}_t-X^{j,N}_t)dt,
\quad i=1,\dots,N_t
\end{equation}
until the next collision between at least two of these $N_t$ particles.
If the sum $S$ of the masses of the particles involved in the collision is smaller than
$8\pi/\chi$, they should automatically separate instantaneously and we carry on 
making evolve the system according to \eqref{d} (with the same values for the masses and for $N_t$)
until the next collision.
If now $S$ exceeds $8\pi/\chi$, the particles involved in the collision are replaced by a single
particle with mass $S$, the number of particles is decreased accordingly, the particles are relabeled,
and we make evolve the system according to \eqref{d} with these new values for $N_t$ and for the masses
until the next collision.

\vip

By construction, the masses take values in $\{1/N,2/N,\dots,N/N\}$
and actually in $\{k/N \; : \; k=1$ or $8\pi N/\chi \leq k \leq N\}$.
A particle
of mass $k/N$ with $k\geq 2$ has to be seen as a {\it cluster} of $k$ elementary particles.
The drift term is thus easily understood: a single elementary particle interacts with the other ones
proportionaly to $1/N$, so that a cluster consisting of $k$ elementary particles 
interacts with the other ones proportionally to its mass $k/N$.
The diffusion coefficients are also quite natural: a single particle being subjected to a Brownian
excitation with coefficient $\sqrt 2$, a cluster with mass $k/N$ is excited by the mean
of $k$ Brownian motions with coefficient $\sqrt 2$, that is, by a Brownian motion with coefficient 
$\sqrt {2 /k}$.

\vip

If $N_t\geq 2$, setting for
$I\subset\{1,\hdots,N_t\}$ with cardinality $|I|\geq 2$,
$\bar{X}^I_t=\sum_{i\in I}\nu^i_tX^{i,N}_t/\sum_{i\in I}\nu^i_t$ and 
$R^I_t=(N/2)\sum_{i\in I} \nu^i_t|X^{i,N}_t-\bar{X}^I_t|^2$,
a simple computation shows that, 
when neglecting the interaction with particles with label outside $I$, $R^I_t$ behaves like a squared Bessel
process of dimension $2(|I|-1)- (\chi N/4\pi )\sum_{i,j\in I,i\ne j}\nu^i_t\nu^j_t\leq (|I|-1)[2-S\chi/(4\pi)]$, which
is nonpositive as soon as $S=\sum_{i\in I} \nu^i_t \geq 8\pi/\chi$.

\vip

Let us mention that once a {\it cluster} is formed,
its mass necessarily exceeds $8\pi/\chi$, so that any collision involving a cluster
will be sticky. 

\vip

The existence of such a process is not clear. Sticky collisions should not be very hard to treat.
The main difficulty is to control reflecting collisions. As explained just above, reflecting collisions
only concern particles with masses $1/N$, so that the classification given in Subsection \ref{sss}
should still be relevant. Thus we believe that the main difficulty is to
build a (necessarily nontrivial) local (in time)
solution to \eqref{ps} when $\chi \geq 8\pi$ and starting from an initial condition
where $k$ particles have the same initial positions, for some $k\in\{2,\dots,\lceil 8\pi N/\chi\rceil-1\}$.

\subsection{Comments}
Observe that this process is different of the one introduced by Ha\v{s}kovec and Schmeiser in \cite{hs}
where they consider a system of particles with different masses to approximate the singular solution to 
the Keller-Segel equation. In fact, rather than considering like us the limit $\e\to 0$ of the 
regularized particle system \eqref{psee}, they first prove propagation of chaos as $N\to \infty$ 
for a fixed $\e>0$ in \cite{hs2}. More precisely, they
check that for fixed $k\geq 1$, the density of $(X^{1,N,\e}_t,\hdots,X^{k,N,\e}_t)$ solving  \eqref{psee} 
(with another regularized kernel $K_\e$) converges as $N\to\infty$ to 
$\prod_{i=1}^kf^\e_t(x_i)$ where $(f^\e_t)_{t\geq 0}$ solves the regularized Keller-Segel partial differential equation
\begin{equation*}
\partial_t f^\e_t(x) + \chi \diver_x ((K_\e\star f^\e_t)(x) f^\e_t(x))  = \Delta_x f^\e_t(x).
\end{equation*}
The limiting behaviour of $(f^\e_t)_{t\geq 0}$ as $\e\to 0$ was studied in \cite{ds} and involves a defect measure. 
Then Ha\v{s}kovec and Schmeiser 
introduce in \cite{hs} a particle system associated with this limit, in which there are
heavy particles that occupy a positive proportion of the mass, interact with the other particles,
but do not undergo any Brownian excitation.

\end{document}